\newcommand{\vertiii}[1]{{\left\vert\kern-0.25ex\left\vert\kern-0.25ex\left\vert #1 
    \right\vert\kern-0.25ex\right\vert\kern-0.25ex\right\vert}}
\newcommand{\brvsamp}[1]{\brv^{(#1)}}
\newcommand{\naive}{na\"{i}ve}
\def\rv{Z}  
\def\rvd{z}  
\def\brv{Z} 
\def\brvset{\V{Z}} 
\def\dom{{I_{\brv}}} 
\newcommand{\inprod}[3]{\left( #1, #2 \right)_{#3}}
\def\bset{\Lambda} 
\newcommand{\bs}[1]{\boldsymbol{#1}}
\def\bsalpha{\bs{\alpha}}  
\def\bsbeta{\bs{\beta}} 
\def\bseta{\bs{\eta}} 
\def\bsi{\bs{i}} 
\def\bsPsi{\bs{\Psi}} 
\newcommand{\norm}[2]{\left\lVert #1 \right\rVert_{#2}} 
\def\boldf{\mathbf{f}} 
\def\bsf{\bs{f}} 
\def\vand{\boldsymbol{\Phi}} 
\def\coef{\boldsymbol{\alpha}} 
\def\stoptol{\varepsilon} 
\def\bW{\mathbf{W}} 
\def\bR{\mathbf{R}} 
\def\bS{\mathbf{S}} 
\def\bsc{\bs{c}} 
\def\by{y} 
\def\bz{z} 
\def\pbwt{w} 
\def\eqmeas{\mu} 
\def\wtmat{\sqrt{\V{W}}}
\newcounter{parentnumber}
\newenvironment{subtheorem}[1]{%
  \counterwithin*{#1}{parentnumber}
  \def\subtheoremcounter{#1}%
  \refstepcounter{#1}%
  \protected@edef\theparentnumber{\csname the#1\endcsname}%
  \setcounter{parentnumber}{\value{#1}}%
  \setcounter{#1}{0}%
  \expandafter\def\csname the#1\endcsname{\theparentnumber.\Alph{#1}}%
  \ignorespaces
  \def\foobar{#1} 
}{%
  \setcounter{\subtheoremcounter}{\value{parentnumber}}%
  \counterwithout*{\foobar}{parentnumber} 
  \ignorespacesafterend
}
\crefname{theorem}{Theorem}{Theorems}
\title[A generalized sampling and preconditioning scheme for sparse PCE approximation]{A generalized sampling and preconditioning scheme for sparse approximation of polynomial chaos expansions}
 \author{John D. Jakeman}
 \thanks{John D.Jakeman. Computer Science Research Institute, Sandia National Laboratories, 1450 Innovation Parkway, SE, Albuquerque, NM 87123}
 \author{Akil Narayan}
 \thanks{Akil Narayan. Scientific Computing and Imaging (SCI) Institute and Mathematics Department, University of Utah, 72 S Central Campus Drive, Salt Lake City, UT 84112. A.~Narayan was partially supported by AFOSR FA9550-15-1-0467 and DARPA N660011524053.}
 \author{Tao Zhou}
 \thanks{Tao Zhou. Institute of Computational Mathematics and the Chinese Academy of Sciences, Beijing, China. T.~Zhou work was supported the National Natural Science Foundation of China (Award Nos. 91130003 and 11571351).}
\begin{document}

\begin{abstract}
In this paper we propose an algorithm for recovering sparse 
orthogonal polynomials using stochastic collocation. Our approach is motivated by the desire to use
generalized polynomial chaos expansions (PCE) to quantify uncertainty in models subject to uncertain 
input parameters. The standard sampling approach for recovering sparse polynomials is to use Monte Carlo (MC)
sampling of the density of orthogonality. However MC methods result in poor function recovery when the
polynomial degree is high.
Here we propose a general algorithm that can be applied to any admissible weight function on a bounded domain 
and a wide class of exponential weight functions defined on unbounded domains. 
Our proposed algorithm samples with respect to the weighted equilibrium measure of the parametric domain, 
and subsequently solves a preconditioned $\ell^1$-minimization problem, where the weights of the diagonal 
preconditioning matrix are given by evaluations of the Christoffel function. We present theoretical analysis to motivate the algorithm, 
and numerical results that show our method is superior to standard Monte Carlo methods 
in many situations of interest. Numerical examples are also provided that demonstrate that 
our proposed Christoffel Sparse Approximation algorithm leads to comparable or improved 
accuracy even when compared with Legendre and Hermite specific algorithms.
\end{abstract}

\maketitle

\section{Introduction}\label{sec:introduction}
Quantifying the effect of uncertain model parameters on model output
is essential to building the confidence of stakeholders in the predictions of that model.
When a simulation model is computationally expensive to run,  
building an approximation of the response of the model output to variations in the model input,
is often an efficient means of quantifying parametric uncertainty. 
In this paper we
consider the polynomial approximation of a function (model)  $f(\brv): \R^d \rightarrow \R$
where $\brv=(\rv_1,\ldots,\rv_d)$ is a finite-dimensional random
variable with associated probability density function $\pbwt(\rvd)$.
Specifically we will approximate 
$f$ with a generalized Polynomial Chaos Expansion (PCE) which consists of 
an polynomial basis whose elements are orthogonal under the weight
$w$~\cite{Ghanem_book_1991,Xiu_K_SISC_2002}. 

The stochastic Galerkin~\cite{Ghanem_book_1991,Xiu_K_SISC_2002} 
and stochastic collocation~\cite{Babuska_NT_SNA_2007,Xiu_H_SISC_2005}
methods are the two main approaches for approximating PCE coefficients. 
In this paper we focus on stochastic collocation because it allows the
computational model to be treated as a black box. Stochastic collocation proceeds in two steps: (i)
running the computational model with a set of realizations of the random parameters $\{\brvsamp{i}\}_i$;
and (ii) constructing an approximation of corresponding model output $\hat{f}\approx f$.
Pseudo-spectral projection~\cite{Conrad_M_SISC_2013,Constantine_EP_CMAME_2012}, 
sparse grid interpolation~\cite{Barthelmann_NR_ACM_2000,Buzzard_RESS_2012,Jakeman_W_JCP_2015}, 
orthogonal least interpolation~\cite{Narayan_X_SISC_2012}, 
least squares~\cite{Migliorati_NSTSISC_2013,Tang_Z_SISC_2014}
are stochastic collocation methods which have effectively used polynomial 
approximations in many situations. 

Recently compressed sensing via $\ell^1$-minimization 
techniques~\cite{Candes_T_ITIEEE_2006,Candes_RT_CPAM_2006,Donoho_ITIEEE_2006,Donoho_ET_ITIEEE_2006} 
have been shown to be an effective means of approximating PCE coefficients from small number of 
function samples~\cite{Blatman_S_JCP_2011,Doostan_O_JCP_2011,Jakeman_ES_JCP_2014,Peng_HD_JCP_2014,Yang_K_JCP_2013}. 
These methods are most effective when the number of non-zero terms
in the PCE approximation of the model output is small (i.e. sparsity) or the magnitude of the 
PCE coefficients decays rapidly (i.e. compressibility). 
The efficacy of sparse polynomial approximation, however, is heavily reliant on the sampling strategy
used to generate the random variable ensemble $\{\brvsamp{i}\}_i$. The most common approach is to draw 
samples from the probability measure $\pbwt$ of the random variables, however the accuracy 
of the function approximation decreases as the polynomial degree increases.
To improve sparse recovery for high-degree polynomials the authors of~\cite{Rauhut_W_JAT_2012} proposed sampling 
from the Chebyshev distribution and applying an appropriate preconditioning to the polynomial 
Vandermonde-type matrix. This method, which we will here-after refer to as the asymptotic sampling method for bounded variables, however can only be applied to polynomials which are orthogonal 
to bounded random variables and moreover, theoretical and 
numerical results~\cite{Yan_GX_IJUQ_2012} have shown that the accuracy of the procedure proposed 
degrades with increasing parameter dimension. 

Recently the authors of~\cite{Hampton_D_JCP_2015} developed a sampling strategy that attempts
to overcome the limitations of the probabilistic sampling and Chebyshev sampling methods. Their so called coherence optimal sampling
strategy, can be applied to a large class of orthogonal polynomials and performs well for low and 
high-degree polynomials and low-and high-dimensional parameter dimensions. The scheme uses
Markov Chain Monte Carlo (MCMC) sampling to draw samples from a measure that attempts to minimize
the coherence parameter of the orthogonal polynomial system

In this paper we present another general sampling strategy for accurate sparse recovery 
of orthogonal polynomials, which we coin that we
call Christoffel Sparse Approximation (CSA). 
The CSA algorithm is applicable for bounded and
unbounded domains, with tensor-product or more general non-tensor-product
weights and domains. The algorithm is based on a similar algorithm for discrete least-squares approximation that we introduced in \cite{Narayan_JJ_MC}.

The essential idea of the CSA algorithm is to sample from a certain equilibrium measure that is associated with the random parameter density and state space. 
We use the Christoffel function to generate preconditioning weights which are then used to solve a standard weighted $\ell^1$-minimization problem. In Section~\ref{sec:christoffel-sparse-approximation} we present this algorithm, the Christoffel Sparse Approximation (CSA) method and give various formula for the sampling density.

In section~\ref{sec:1d-analysis} we prove that the CSA-based algorithm can successfully recover univariate sparse and compressible solutions. We show that for the bounded unvariate domains, we can accomplish this with the optimal $M \gtrsim s$ samples. For unbounded univariate domains, we pay an additional penalty, requiring $M \gtrsim s n^{2/3}$, where $n$ is the maximum polynomial degree in the basis. 
Although we only present theoretical results for univariate $s$-term approximation using CSA, the numerical results presented in Section~\ref{sec:results} demonstrate the CSA algorithm performs comparably or better than existing $\ell^1$-minimization algorithms in a number of practical settings, including high-dimensional multivariate settings.

Finally, we note that our results are useful outside of the context of this paper: For example, the 3 parts of Theorem \ref{thm:bound-csa} are used in quantifying sampling count criterion for $\ell^1$ recovery when randomly sub-sampling from a tensor-product Gauss quadrature grid. This idea, using our results, is explored in \cite{guo_subsampling_2015}, and builds on the idea presented in \cite{tang_subsampled_2014}.

\section{Background}\label{sec:background}
\subsection{Polynomial chaos expansions}
Polynomial Chaos methods represent both the model inputs $\Theta=(\theta_1,\ldots,\theta_{\tilde{d}})$ 
and model output $f(\Theta)$ as an expansion of orthonormal polynomials of random variables 
$\brv=(\rv_1,\ldots,\rv_d)$. Specifically we represent the random inputs 
 as
\begin{equation}
\label{eq:pce-rv-integer-index}
\theta_n\approx\sum_{i=1}^{N_{\theta_n}}\beta_{i}\phi_{i}(\brv),\quad n\in[\tilde{d}]
\end{equation}
and the model output as
\begin{equation}
\label{eq:pce-integer-index}
f(\Theta(\brv))\approx\hat{f}=\sum_{i=1}^N\alpha_{i}\phi_{i}(\brv).
\end{equation}
where for $N\in\mathbb{N}$ we use the notation $[N]=\{1,\ldots,N\}$.
We refer to~\eqref{eq:pce-rv-integer-index} and~\eqref{eq:pce-integer-index} as polynomial chaos expansions (PCE). 
The PCE basis functions $\{\phi_{i}(\brv)\}$ are tensor products of orthonormal polynomials 
which are chosen to be 
orthonormal with respect to the distribution $\pbwt(\brv)$ of the random vector $\brv$.
That is 
\[
\inprod{\phi_{i}(\brv)}{\phi_{j}(\brv)}{\pbwt} = \int_{\dom} \phi_{i}(\brv)\phi_{j}(\brv)\pbwt(\brv) d\brv= \delta_{ij}
\]
where $\dom$ is the range of the random variables.

The random variable (germ) $\brv$ of the PCE is typically related to 
the distribution of the input variables. For example, if the 
one-dimensional input variable $\theta$ is uniform on $[a,b]$ then 
$\rv$ is also chosen to be uniform on [-1,1] and $\phi$ are chosen 
to be Legendre polynomials such that $\theta=\beta_1 + \beta_2\rv=(b+a)/2+\rv (b-a)/2$. 
For simplicity and without loss of
generality, we will assume that $\brv$ has the same distribution as
 $\Theta$ and thus we can use the two variables interchangeably (up to a linear transformation which we will ignore).

 Any function $f\in L^2(\pbwt)$ can be represented by an infinite PCE, and this expansion converges in the $L^2$ sense under relatively mild conditions on the distribution of $\brv$ \cite{ernst_convergence_2012}. However in practice an infinite PCE must be truncated to a form like \eqref{eq:pce-integer-index}. The most common approach is to set a degree $n$ and retain only the multivariate polynomials of degree at most $n$. 
Rewriting~\eqref{eq:pce-integer-index} using the typical multi-dimensional index notation
\begin{equation}
\label{eq:pce-multi-index}
f\approx\hat{f}=\sum_{\bsi\in\bset}\alpha_{\bsi}\phi_{\bsi}(\brv)
\end{equation}
the total degree basis of degree $n$ is given by
\begin{equation}\label{eq:hyperbolic-index-set} 
 \bset = \bset_{n} = \{\phi_{\bsi} : \norm{\bsi}{1} \le n\},\quad \bsi = (i_1,\ldots,i_d)
\end{equation}
Here we denote the space of polynomials of degree at most $n$ by $P_n$.
The number of terms in this total degree basis is
\[
 \text{card}\; \bset_{n} \equiv N = { d+n \choose d }
\]
The rate of convergence is dependent on the regularity of the response surface. If $f$ 
is analytical with respect to the random variables then~\eqref{eq:pce-integer-index} 
converges exponentially with the polynomial degree in $L^2(\pbwt)$-sense~\cite{Bieri_S_CMAME_2009}.

\subsection{$\ell^1$-minimization}\label{sec:compressed-sensing}
The coefficients of a polynomial chaos expansion can be approximated 
effectively using $\ell^1$-minimization. Specifically,
given a small set of $M$ unstructured realizations  $\brvset=\{\brvsamp{1},\ldots,\brvsamp{M}\}$, with 
corresponding model outputs $\boldf=(f(\brvsamp{1}),\ldots,f(\brvsamp{M}))^T$, we would like to find
a solution that satisfies
\[
 \vand\coef \approx \boldf
\]
where $\coef=(\alpha_{\bsi_1},\ldots,\alpha_{\bsi_N})^T$ denotes the vector 
of PCE coefficients and $\vand\in\mathbb{R}^{M\times N}$ denotes the Vandermonde matrix with entries
$\vand_{ij} = \phi_j(\brvsamp{i}),\quad i\in[M],\; j\in[N]$. 

When the model $f$ is high-dimensional and computationally expensive, the number of model simulations 
that can be generated is typically much smaller than the number of unknown PCE coefficients, i.e $M\ll N$. 
Under these conditions, finding the PCE coefficients is ill-posed and we must 
impose some form of regularization to obtain a unique solution. One way to regularize is to enforce
sparsity in a PCE. 

A polynomial chaos expansion is defined as $s$-sparse when $\norm{\coef}{0} \le s$, 
i.e. the number of non-zero coefficients does not exceed $s$.
Under certain conditions, $\ell^1$-minimization provides a means of identifying {\it sparse} coefficient 
vectors from a limited amount of simulation data\footnote{In practice, not many simulation models will be truly sparse, 
but PCE are often {\it compressible}, that is the magnitude of the 
coefficients decays rapidly or alternatively most of the PCE variance is concentrated in a few terms. 
Compressible vectors are well-approximated by sparse vectors
and thus the coefficients of compressible PCE can also be recovered accurately using $\ell^1$-minimization.}.
Specifically $\ell^1$-minimization attempts to find the dominant PCE coefficients
 by solving the optimization problem
\begin{equation}
\label{eq:pre-condition-bpdn}
\coef^\star  = \argmin_{\coef}\; \|\coef\|_1\quad \text{such that}\quad \|\wtmat\vand\coef - \wtmat\boldf\|_2 \le \stoptol
\end{equation}
where $\wtmat\in{R}^{M\times M}$ is a diagonal matrix with entries chosen to enhance 
the recovery properties of $\ell^1$-minimization, and $\stoptol$ is a noise/tolerance that allows the data to slightly deviate from the PCE. This $\ell^1$-minimization problem is often referred to as Basis Pursuit Denoising (BPDN). The problem obtained by setting $\stoptol=0$, to enforce interpolation, is 
termed Basis Pursuit. 

\subsection{Near-optimal sparse recovery}\label{sec:background-cs-recovery}
For simplicity we first consider the situation when the function $f$ is assumed to be \textit{exactly} $s$-sparse in the PCE basis $\phi_{\bsi}$. In this case we take the noise tolerance in \eqref{eq:pre-condition-bpdn} to be zero, $\stoptol=0$ so that we are enforcing exact interpolation. The intent of the computational $\ell^1$ optimization problem \eqref{eq:pre-condition-bpdn} is to recover a sparse representation. Such a representation could be recovered by solving the more difficult analytical problem
\begin{align}\label{eq:pre-condition-l0}
  \coef^\dagger  = \argmin_{\coef}\; \|\coef\|_0\quad \text{such that}\quad \vand\coef = \boldf.
\end{align}
This $N P$-hard problem is not computationally feasible, so the above optimization is usually not performed. However, if the matrix $\vand$ satisfies the \textit{restricted isometry property} (RIP) \cite{Candes_T_ITIEEE_2006,candes_decoding_2005}, then the desired sparse solution $\coef^\dagger$ coincides with the computable minimal $\ell^1$-norm solution from \eqref{eq:pre-condition-bpdn}. 
One is then led to wonder how systems satisfying the RIP can be constructed. The RIP essentially states that $\vand$ has a limited ability to elongate or contract the $\ell^2$ norm of $s$-sparse vectors. This can be roughly interpreted as requiring (i) that the rows of $\vand$ are approximately orthonormal, and (ii) the entries of $\vand$ are not too concentrated in any subset of entries and instead that the total mass is roughly equidistributed. One expects that property (i) can be satisfied if one constructs $\vand$ by choosing the $\phi_{\bsi}$ as an orthonormal basis and sampling parameter values (rows) according to the measure of orthogonality. A quantifiable way to enforce (ii) is via the concept of \textit{coherence}. It was shown in \cite{Rahut_TFNMSR_2010} that if the $\phi_{\bsi}$ are an orthonormal basis, then if the number of rows $M$ of the system satisfies 
\begin{align}\label{eq:bounded-orthonormal-system-recovery-condition}
  M &\gtrsim (s \log^3 s) L, & L &= \max_{\bsi \in \bset} \left\| \phi_{\bsi}\right\|^2_{\infty},
\end{align}
then the system satisfies the RIP, and thus the computable $\ell^1$ solution from \eqref{eq:pre-condition-bpdn} (with $\stoptol = 0$) coincides with the $s$-sparse solution.
Above, the infinity norm $\|\cdot\|_\infty$ is taken over the support of the sampling measure. The parameter $L$ is called the \textit{mutual coherence}, and is a bound on the entry-wise maximum of the matrix $\vand$. 
Up to logarithmic factors this is the best one can hope for, and so constructing matrices that attain the RIP is an important task. 

Unfortunately, for almost all PCE expansions, the mutual coherence $L$ is not $\mathcal{O}(1)$, and usually grows as the maximum polynomial degree grows.\footnote{The notable exception is when the PCE basis corresponds to the Chebyshev polynomials, and in this case the basis elements have $L^\infty$ norms that are uniformly bounded with respect to the polynomial degree.} However, it was noted in \cite{Rauhut_W_JAT_2012} that for a particular PCE basis, the Legendre polynomial basis, the functions $\phi^2_{\bsi}$ admit an envelope function $1/(c \lambda(Z))$ such that $1/\lambda \in L^1$, where $c$ is a normalization constant. The idea presented in \cite{Rauhut_W_JAT_2012} is then to multiply the basis elements $\phi_{\bsi}$ by $\sqrt{\lambda}$ so that they become uniformly bounded with $\mathcal{O}(1)$ coherence parameter $L$. In other words, solve the problem
\begin{align}\label{eq:pre-condition-l1-interpolation}
  \coef^\star  = \argmin_{\coef}\; \|\coef\|_1\quad \text{such that}\quad \wtmat\vand\coef = \wtmat\boldf,
\end{align}
where $\wtmat$ is a diagonal matrix with the entries of given by the (inverse) envelope function, $(W)_{m,m} = c \lambda(Z^{(m)})$. This motivates the matrix $\wtmat$ introduced in \eqref{eq:pre-condition-bpdn}. In order to retain orthonormality of the rows of $\wtmat \vand$, a Monte Carlo sampling procedure cannot sample from the original orthogonality density $w$, but must instead sample from the biased density $w/\lambda$.

With $N = |\bset|$ the size of the dictionary, we exploit the idea above in this paper using the (inverse) envelope function
\begin{align}\label{eq:christoffel-function}
  (W)_{m,m} &= N \lambda_{\bset}(Z^{(m)}), & \lambda_{\bset} (Z) &= \frac{1}{\sum_{\bsi \in \bset} \phi_{\bsi}^2(Z)}
\end{align}
When the basis elements $\phi_{\bsi}$ correspond to a PCE, the function $\lambda(z)$ is known as the ($L^2$) Christoffel function from the theory of orthogonal polynomials. Note that in the context of the problem \eqref{eq:pre-condition-l1-interpolation}, the weight $\lambda$ defined by \eqref{eq:christoffel-function} means that the preconditioner $\wtmat$ is simply formed using inverse row-norms of $\vand$.

With the weight matrix $\wtmat$ specified, we need only to determine the biased density $w/\lambda$. If we use the index set $\Lambda = \Lambda_n$, then under mild conditions on $w$ and $I_Z$, there is a unique probability measure $\mu$ such that $w/\lambda \approx N \mathrm{d} \mu$, where $\approx$ becomes equality interpreted in the appropriate sense as $n\rightarrow \infty$, and $N = |\bset_n|$. This measure $\mu$ is the weighted pluripotential equilibrium measure \cite{berman_bergman_2009}.

This essentially completes a high-level description of the CSA algorithm: sample from an equilibrium measure, and precondition/weight with the Chrsitoffel function. The detailed algorithm is given in Section~\ref{sec:christoffel-sparse-approximation} where more precise formulas regarding sampling densities are shown. Section \ref{sec:1d-analysis} provides convergence results for the univariate case -- the multivariate case requires bounds on values of Christoffel-function-weighted polynomials, which we are unable to currently provide. Nevertheless, our numerical results in Section \ref{sec:results} show that the CSA algorithm is competitive in various situations both with a standard unweighted Monte Carlo approach (sampling from the density $w$), and with other approaches in the literature that have been specially tailored for certain distributions.

Finally, much our discussion above concerns noiseless recovery of exactly sparse functions, the efficacy of all the procedures above essentially generalizes to ``approximately sparse" (or \textit{compressible}) signals in the presence of noisy data. Indeed the Theorem~\ref{thm:csa-convergence} presented in Section~\ref{sec:1d-analysis} provides bounds on the error of the $s$-term approximation recovered by the CSA algorithm in the presence of noisy data. 

\section{Christoffel sparse
approximation}\label{sec:christoffel-sparse-approximation}
The Christoffel sparse approximation (CSA) algorithm solves the preconditioned BPDN problem~\eqref{eq:pre-condition-bpdn} to accurately recover sparse orthogonal polynomials from limited data.  Given $Z$ and its distribution along with an index set $\Lambda$, the CSA method consists of five steps which are outlined here and summarized in Algorithm~\ref{alg:csa-algorithm}: 
\begin{enumerate}
\item sample iid with respect to the probability density $v$ of an equilibrium measure, which depends on the orthogonality weight $w$. When $\rv$ is a random variable with unbounded state space $I_Z$, then $v$ depends on $n$, the maximum polynomial degree of the index set $\Lambda$ defining the dictionary. In this case we write $v = v_n$.
\item evaluate the function $f(\brv^{(m)})$ at the selected samples $\{\brv_m\}_{m=1}^M$
  \item form $M \times N(\Lambda)$ Vandermonde-like matrix $\vand$ with entries $\Phi_{m,i} = \phi_{i}(\brv^{(m)})$\;
  \item form the diagonal preconditioning matrix $\wtmat$ using the values $\sqrt{N \lambda(Z_m)}$, which are evaluations of the (scaled) Christoffel function from the $\pbwt$-orthogonal polynomial family $\phi$. (See \eqref{eq:christoffel-function}).
  \item solve the preconditioned $\ell^1$-minimization problem \eqref{eq:pre-condition-bpdn} to approximate the polynomial coefficients $\alpha_{\bsi}$
\end{enumerate}

\begin{algorithm}[H]
\SetKwInOut{Input}{input}\SetKwInOut{Output}{output}

\Input{Weight/density function $w$ with associated orthonormal polynomial family $\{\phi_i\}_{i\in\Lambda}$, index set $\Lambda$, function $f$}
\Output{Expansion coefficients $\coef$ such that $f\approx\hat{f}=\sum_{i\in\Lambda}\alpha_{i}\phi_{i}(\brv)$}

\BlankLine
Generate $M$ iid samples $\left\{\brv^{(m)}\right\}$ from equilibrium density $v=\frac{d\eqmeas}{d\brv}$\;
Assemble $\bsf$ with entries $f_m = f(\brv^{(m)})$\;
Form $M \times N(\Lambda)$ matrix $\vand$ with entries $\Phi_{m,i} = \phi_{i}(\brv^{(m)})$\;
Compute weights $\bW$ with entries $(W)_{m,m} = N/{\lambda_\Lambda(\brv^{(m)})}$ \;
Compute $\coef^\star  = \argmin_{\coef}\; \|\coef\|_1\quad \text{such that}\quad \|\bW\vand\coef - \bW\bsf\|_2 \le \stoptol$\;

\caption{Christoffel Sparse Approximation (CSA)}\label{alg:csa-algorithm}
\end{algorithm}

The weight function $v$ is a density that depends on the density $w$ of $Z$ and $I_Z$. Owing to the discussion at the end of Section \ref{sec:background-cs-recovery}, we require $v \approx w/N \lambda_N$. As $N \rightarrow\infty$, one has equality (interpreted appropriately) with a unique limit given by the weighted pluripotential equilibrium measure. Said in another way, the asymptotic behavior of the Christoffel function with respect to the orthogonality density $w$ turns out to be governed by the weighted pluripotential equilibrium measure \cite{berman_bergman_2009}. 

When $I_Z$ is bounded, our sampling density $v$ is the density associated to the unweighted pluripotential equilibrium measure on the domain $I_Z$, regardless of $w$. When $I_Z$ is unbounded and $w$ is an exponential weight, then $v$ is a scaled version of the density associated to the $\sqrt{w}$-weighted pluripotential equilibrium measure on the domain $I_Z$; the scaling factor depends on $w$. We detail these sampling techniques for various canonical weights and domains in the coming discussion. The main purpose of this section is to describe the sampling density $v(z)$ for the CSA algorithm; once this is completed, the remainder of the algorithm follows a standard preconditioned $\ell^1$ recovery procedure in compressive sampling.

In this paper we use the equilibrium measure as a tool for performing simulations, and so omit details concerning its definition or properties. Standard references for pluripotential theory are \cite{klimeck_pluripotential_1991} and Appendix B of \cite{saff_logarithmic_1997}. Much of the material presented here defining the sampling density $v(z)$ is also in \cite{Narayan_JJ_MC}.

Given domain $I_Z$, let $Q(z)$ be a certain function defining a weight function $\exp(-Q)$. This new weight function in general can be distinct from the probability density $w$, but in the cases we describe below they are related. The weight function $\exp(-Q)$ is associated with the $\exp(-Q)$-weighted pluripotential equilibrium measure on $I_Z$, denoted by $\mu_{I_Z, Q}$,which is a unique probability measure. When $Q = 0$, we use the notation $\mu_{I_Z} = \mu_{I_Z, 0}$. This measure, loosely speaking, carries information about the extremal behavior of weighted polynomials on $I_Z$. Particular examples of this measure are given in the following sections. On bounded domains, we will identify $Q = 0$, and define our sampling density $v$ to correspond directly to $\mu_{I_Z}$. On unbounded domains with exponential weights, we will identify $Q = -\frac{1}{2} \log w$, and define our sampling density $v$ to be a dilated version of $\mu_{I_Z,Q}$.

As noted above, the particular sampling strategy (via the weighted equilibrium measure) differs on bounded versus unbounded domains, so we discuss them individually here. The main difference between the bounded and unbounded cases is that for the bounded case, $v$ does not depend on $\Lambda$, whereas for the unbounded case it does. The state of knowledge for the univariate case is essentially complete, whereas much is still unknown about the general weighted multivariate case. 

The purpose of the following sections is to describe the sampling density $v$ used in step (a) above. The remaining steps (b) and (c) have straightforward and identical implementation in all cases below: perform the optimization \eqref{eq:pre-condition-bpdn} with the weight matrix $\wtmat$ entries given by \eqref{eq:christoffel-function}.

\subsection{Univariate sampling}\label{sec:univariate-sampling}
In the univariate case, we have an essentially complete description of the various sampling measures. This is a direct result of historically successful analysis and characterization of the weighted potential equilibrium measure. For the bounded domain case, we sample from the (unweighted) potential equilibrium measure, and for the unbounded case we sample from expanded versions of the weighted potential equilibrium measure.

\subsubsection{Bounded intervals}
When $\rv$ is scalar and takes values on the finite interval $I_Z = [-1,1]$, then we sample from the unweighted potential equilibrium measure, which is the arcsine measure having the ``Chebyshev" density
\begin{align}\label{eq:chebyshev-density}
  v(z) = \frac{1}{\pi \sqrt{1 - z^2}}.
\end{align}
Sampling from this density for the purpose of compressive sampling is not new \cite{Rauhut_W_JAT_2012}. This density corresponds to a standard symmetric Beta distribution, and is readily sampled from using available computational packages. Note that the sampling density here is independent of the weight function $w$ defining the PCE basis.

\subsubsection{Exponential densities on $\R$}\label{sec:univariate-sampling-whole}
Now we consider the case that $I_Z$ is unbounded with $I_Z = (-\infty, \infty) = \R$. For any scalar $\alpha > 1$, we assume that $w(z)$ is an exponential probability density weight of the form 
\begin{align*}
  w(z) &= \exp(-|z|^\alpha), & z &\in (-\infty, \infty).
\end{align*}
Note that we need a normalization constant to make $w$ a probability density, but the constant does not affect any of the discussion below so we omit it. Here, we have $Q = - \frac{1}{2} \log w = \frac{1}{2} |z|^\alpha$. Our assumption $\alpha > 1$ is a standard assumption in the theory of orthogonal polynomials, and relates to conditions ensuring $L^2_w$-completeness of polynomials. We use $n = \max_{i \in \bset} i$ to denote the maximum polynomial degree of the index set $\bset$.

Associated to the parameter $\alpha$ (more pedantically, associated to $\sqrt{w}$), we define the following constants:
\begin{align}\label{eq:MRS-whole}
  a^W = a^W\left(\sqrt{w}\right) &= \left(\frac{ \sqrt{\pi} \Gamma\left(\frac{\alpha}{2}\right) }{\Gamma \left(\frac{ \alpha}{2} + \frac{1}{2}\right) } \right)^{1/\alpha}, & a^W_{n} \equiv n^{1/\alpha} a^W.
\end{align}
These are the Mhaskar-Rahkmanov-Saff numbers associated with the weight $\sqrt{w}$ \cite{mhaskar_where_1985,rakhmanov_asymptotic_1984}. We add the superscript $W$ to indicate that these numbers are associated with weights having support on the \textit{Whole} real line $(-\infty, \infty)$. These numbers delinate a compact interval on which a weighted polynomial achieves its $I_Z$-supremum. We define the following intervals:
\begin{align}\label{eq:whole-sn}
  S^W_{n} &= \left[ - a^W_{n}, a^W_{n} \right], & S^W \triangleq S^W_{1}.
\end{align}
The probability density function of the $\sqrt{w}$-weighted equilibrium measure is given by
\begin{align}\label{eq:equilibrium-measure-whole-line}
  \dfdx{\mu_{I_Z,Q}}{z}\left(\rvd\right) &= \frac{\alpha \sqrt{\left(a^W\right)^2 - \rvd^2}}{\pi^2} \int_{-a^W}^{a^W} \frac{u^{\alpha-1} - \rvd^{\alpha-1}}{(u-\rvd) \sqrt{\left(a^W\right)^2 - u^2}} \dx{u}
  & z &\in S^W,
\end{align}
where $\dx{z}$ denotes Lebesgue measure on $I_Z$. See, e.g., \cite{saff_logarithmic_1997}; a summary table of explicit weights for various $\alpha$ is also given in \cite{narayan_adaptive_2014}. 

Given this density, and the index set $\Lambda$ which forms our compressive sampling dictionary, the CSA sampling density $v_n(z)$ is formed by linearly mapping the density of $\mu_{I_Z,Q}$ to $S^W_n$:
\begin{align}\label{eq:whole-line-sampling-density}
  v_n(z) &= n^{-1/\alpha} \dfdx{\mu^W_{I_Z,Q}}{  z }\left( n^{-1/\alpha} \rv\right), & z &\in S^W_{n},
\end{align}
We note that in the particularly important case of $\alpha = 2$, corresponding to a PCE basis of Hermite polynomials, we have $a^W = \sqrt{2}$, and
\begin{align*}
  v_n(z) &= \frac{1}{\pi \sqrt{n}} \sqrt{2 n - z^2}, & z &\in [-\sqrt{2 n}, \sqrt{2 n}],
\end{align*}
which is another standard, symmetric Beta distribution, and so easily sampled.

While \eqref{eq:equilibrium-measure-whole-line} seems relatively complicated, in fact this density behaves essentially like the $\alpha=2$ semicircle density above. I.e., for any $\alpha > 1$, there are positive constants $c_1, c_2$ depending only on $\alpha$ such that
\begin{align*}
  c_1 \sqrt{\left(a^W\right)^2 -z^2} \leq \dfdx{\mu_{I_Z,Q}}{z}\left( z \right) &\leq c_2 \sqrt{\left(a^W\right)^2 -z^2}, & z \in \left(-a^W, a^W\right)
\end{align*}
See Theorem 1.10 of \cite{levin_orthogonal_2001}.

\subsubsection{Exponential densities on $[0, \infty)$}\label{sec:univariate-sampling-half}
In this section we consider the case of $I_Z = [0, \infty)$. We assume that $w(z)$ is a one-sided exponential weight of the form 
\begin{align*}
  w(z) &= \exp(-|z|^\alpha), & z &\in [0, \infty).
\end{align*}
Again we have $Q = - \frac{1}{2} \log w = \frac{1}{2} |z|^\alpha$. We assume that $\alpha > \frac{1}{2}$, and again take $n = \max_{i \in \bset} i$.

Associated to the parameter $\alpha$, we define the following constants:
\begin{align}\label{eq:MRS-half}
  a^H =a^H (\sqrt{w}) &=  \left(\frac{ 2 \sqrt{\pi} \Gamma\left(\alpha\right) }{\Gamma \left(\alpha + \frac{1}{2}\right) } \right)^{1/\alpha}, & a^H_{n} \equiv n^{1/\alpha} a^H.
\end{align}
These are the Mhaskar-Rakhmanov-Saff numbers associated with the weight $\sqrt{w}$ on $[0, \infty)$. 
  
The superscript $H$ indicates that these numbers are associated with weights having support on the \textit{Half} real line. We define the following intervals:
\begin{align}\label{eq:half-sn}
  S^H_{n} &= \left[ 0, a^H_{n} \right], & S^H \triangleq S^H_{1}.
\end{align}
Consider the probability density function of the weighted equilibrium measure, given by
\begin{align}\label{eq:equilibrium-measure-half-line}
  \dfdx{\mu^H_{I_Z,Q}}{z}\left(\rv\right) &= \frac{\alpha}{\pi^2} \sqrt{\frac{a^H_{\alpha} - \rv}{\rv}} \int_0^{a^H_{\alpha}} \frac{u^\alpha - \rv^\alpha}{(u - \rv)\sqrt{u \left(a^H_{\alpha} - u\right)}} \dx{u},
  & z &\in S^H_{\alpha}.
\end{align}
Given this density, and the index set $\Lambda$ which forms our compressive sampling dictionary, we form the CSA sampling density $v_n$ by mapping the density of $\mu^H_{I_Z,Q}$ to $S^H_n$:
\begin{align}\label{eq:half-line-sampling-density}
  v_n(\rvd) &= n^{-1/\alpha} \dfdx{\mu_{I_Z,Q}}{  \rvd }\left( n^{-1/\alpha} \rvd\right), & \rvd &\in S^H_{n}.
\end{align}
We note in particular for case of $\alpha = 1$ corresponding to a PCE basis of Laguerre polynomials we have $a^H_1 = 4$, and: 
\begin{align}\label{eq:laguerre-scaled-equilibrium-density}
  v(\rvd) &= \frac{1}{2 n \pi} \sqrt{\frac{4n - \rvd}{\rvd}}, & \rvd &\in [0, 4 n].
\end{align}
This is just an asymmetric Beta distribution on $[0, 4 n]$.

The formula \eqref{eq:equilibrium-measure-half-line} looks cumbersome, but this measure essentially behaves like the whole-line measure \eqref{eq:equilibrium-measure-whole-line}. In particular, we have
\begin{align*}
  \dfdx{\mu^H_{I_Z,Q}}{z}(z) &= \frac{1}{2 \sqrt{z}} \dfdx{\mu_{\R, Q^2}}{z} \left( 2^{1/2\alpha} \sqrt{z} \right), & z &\in S^H
\end{align*}
See \cite{levin_orthogonal_2005}. Thus, we can actually sample from the measure $\mu^H_{I_Z, Q}$ by instead transforming samples from $\mu^W_{\R, Q^2}$. Given $\alpha$, let $Y$ be a random variable whose distribution is $\mu^W_{\R, Q^2}$. Then $Z = 2^{1/\alpha} Y^2$ has distribution $\mu^H_{I_Z, Q}$. Therefore, sampling from the half-line equilibrium measures can be reduced to the problem of sampling from the whole-line measure \eqref{eq:equilibrium-measure-whole-line}.

\subsection{Multivariate sampling}\label{sec:multivariate-sampling}

In general very little is known (other than existence/uniqueness) about the (un)weighted pluripotential equilibrium measure on sets in $\R^d$ with $d > 1$. We detail some special cases below. We note in particular that essentially nothing is known about the weighted case, and so our descriptions of the sampling density $v$ on unbounded domains with exponential weights are conjectures.

\subsubsection{The bounded domain $[-1,1]^d$}
Let $\rv$ take values on the domain $I_Z = [-1,1]^d$ for any weight function $w$. In this case, the CSA sampling density $v$ corresponds to the unweighted equilibrium measure on $I_Z$; the density of this measure is given by
\begin{align*}
  v(\rv) = \frac{1}{\pi^d} \frac{1}{\prod_{i=1}^d \sqrt{1 - \rv_i^2}},
\end{align*}
which is a tensor-product Chebyshev density on $[-1,1]^d$.

\subsubsection{Convex, symmetric bounded domains}
If $\rv$ takes values on a compact set $I_Z \subset \R^d$, our CSA sampling density $v$ corresponds to the unweighted equilibrium measure on $I_Z$. There is little that can be said about this measure in general, but the following special cases are known:
\begin{itemize}
  \item \cite{bedford_complex_1986}: if $I_Z$ is convex, then the equilibrium measure Lebesgue density $\dfdx{\mu_{I_Z}}{z}(z)$ exists and is bounded above and below relative to $\left[\mathrm{dist}\left(z, \partial I_Z\right)\right]^{-1/2}$, and is thus ``Chebyshev-like". (We use $\mathrm{dist}$ to denote Euclidean distance between sets, with $\partial I_Z$ the boundary of $I_Z$.)
  \item \cite{bedford_complex_1986}: if $I_Z$ is the unit ball in $\R^d$, then the equilibrium measure density is given by
    \begin{align*}
      v(z) = \dfdx{\mu_{I_Z}}{z} = \frac{c}{\sqrt{1 - \left\|z\right\|_2^2}}, 
    \end{align*}
    where $c$ is a normalization constant. Note that sampling from this density is relatively straightforward: Let $W$ be a standard normal random variable in $\R^d$, and let $B$ be a univariate Beta random variable on $[0,1]$ with shape parameters $\alpha = \frac{d}{2}$ and $\beta = \frac{1}{2}$. Then $Z = \frac{W}{\|W\|_2} \sqrt{B}$ has the desired distribution on the unit ball in $\R^d$.\footnote{To see this, note that $R = |Z|$ needs to have density proportional to $R^{d-1} (1 - R^2)^{-1/2}$. Some manipulation on this distribution shows that $R^2$ has the distribution of $B$. The $\frac{W}{\|W\|_2}$ factor is a directional factor, sampling uniformly on the boundary of the unit ball.}
  \item \cite{xu_asymptotics_1999}: if $I_Z = \left\{z \in \R^d\;\; |\;\; \sum_{j=1}^d z_j \leq 1 \textrm{ and } z_j \geq 0, \; j=1, \ldots, d\right\}$ is the unit simplex in $\R^d$, then $\dfdx{\mu_{I_Z}}{z} = c \left[ \sqrt{1 - \sum_{j=1}^d z_j} \prod_{j=1}^d z_j \right]^{-1/2}$, with $c$ a normalizing constant. This density may also be sampled: this is, in fact, the density of a $(d+1)$-dimensional Dirichlet distribution with all $d+1$ parameters equal to $\frac{1}{2}$ and the $(d+1)$'st coordinate equal to $1 - \sum_{j=1}^d z_j$. Therefore, let $D$ be the previously mentioned $(d+1)$-dimensional Dirichlet random vector. Form $Z$ by truncating the last entry in this vector; then $Z$ is a $d$-dimensional random variable with density $\dfdx{\mu_{I_Z}}{z}$.
\end{itemize}
To the best of our knowledge, these cases are essentially a complete description of the current state of knowledge about the unweighted equilibrium measure on domains in $\R^d$.

\subsubsection{The domain $\R^d$ with Gaussian density}\label{sec:csa-hermite-multivariate-sampling}
Let $I_Z = \R^d$ with Gaussian weight $w(z) = c \exp\left( -\left\|z\right\|_2^2 \right)$ with $c$ a normalizing constant. In \cite{Narayan_JJ_MC}, we prescribe sampling according to the density 
\begin{align*}
  v(\rv) = C \left( 2 - \left\|\rv\right\|^2 \right)^{d/2},
\end{align*}
with $C$ a normalization constant. We conjectured in \cite{Narayan_JJ_MC} that this is the weighted equilibrium measure associated to this choice of $\pbwt$. Like the univariate unbounded cases with $\alpha = 2$, we expand the samples by the square root of the maximum polynomial degree in the index set $\Lambda$. The following is a concrete way to sample from this expanded density: 
\begin{enumerate}
  \item Compute $n$, the maximum polynomial degree in the index set $\Lambda$. This is equal to $\max_{\bsi \in \Lambda} \left|\bsi\right|$.
  \item Generate a vector $Y=(y_1,\ldots,y_d)$ of $d$ independent normally distributed
random variables. 
  \item Draw a scalar sample $u$ from the Beta distribution on $[0,1]$, with distribution parameters $\alpha=d/2$ and $\beta=d/2+1$. 
  \item Set
    \begin{align*}
      \brv=\frac{Y}{\lVert Y\rVert_2} (2 n u)^{1/2}.
    \end{align*}
\end{enumerate}
The above procedure generates samples on the Euclidean ball of radius $\sqrt{2 n}$ in $\R^d$. We emphasize that our methodology samples from a density that is only a conjecture for the correct equilibrium measure. 

\subsubsection{The domain $[0, \infty)^d$ with exponential density}
Let $\rv$ take values on $I_Z = [0, \infty)^d$ with associated probability density $\pbwt(\rvd) = c \exp(-\|\rvd\|_1)$, where $c$ is a normalizing constant. Here we sample from the density function
\begin{align*}
  v(\rv) = C \sqrt{\frac{\left(4 - \sum_{i=1}^d \rvd_i\right)^d}{\prod_{i=1}^d \rvd_i}}
\end{align*}
As we conjectured in \cite{Narayan_JJ_MC}, this is the equilibrium measure associated to this choice of $\pbwt$. We expand the samples by the maximum polynomial degree in the index set $\Lambda$ (like the $\alpha = 1$ unbounded univariate case). The following is a concrete way to sample from this expanded density: 
\begin{enumerate}
  \item Compute $n$, the maximum polynomial degree in the index set $\Lambda$. This is equal to $\max_{\bsi \in \Lambda} \left|\bsi\right|$.
  \item Generate a $(d+1)$-variate Dirichlet random variable $W$ with parameters $\left(\frac{1}{2}, \frac{1}{2}, \ldots, \frac{1}{2},
\frac{d}{2}+1\right)$.
  \item Truncate the last ($(d+1)$'th) entry of $W$
  \item Set $\brv = 4 W n$.
\end{enumerate}
The above procedure generates samples on the set of Euclidean points in $\R^d$ whose $\ell^1$ norm is less than or equal to $4 n$.

\section{Analysis for univariate systems}\label{sec:1d-analysis}
Our analysis in this section assumes that $\rv$ is a scalar. (That is, we consider the one-dimensional case here.) We show that a CSA-based algorithm can successfully recover sparse and compressible solutions. On bounded domains, we can accomplish this with the optimal $M \gtrsim s$ samples. For unbounded domains, we pay an additional penalty, requiring $M \gtrsim s n^{2/3}$. To establish these results, we use the analysis for bounded orthonormal systems presented in \cite{Rahut_TFNMSR_2010}. Using this analysis, the unbounded penalty of $n^{2/3}$ is sharp. The CSA method also introduces some error terms stemming from the fact that the PCE basis is only approximately orthonormal under our sampling strategy; these error terms are similar to those obtained for least-squares in \cite{Narayan_JJ_MC}.


With $\rv$ a scalar, we consider approximation with a dictionary of PCE basis elements up to degree $n \in \N$, corresponding to a total of $N = n+1$ basis elements. To be explicit, in this section we assume
\begin{align}\label{eq:univariate-dictionary}
  \Lambda &= \left\{ 0, 1, \ldots, n \right\}, & |\Lambda| &= N = n+1.
\end{align}
Recall that in the unbounded case our sampling depends on $n$. When the number of dominant coefficients is $s \ll N$, we will be concerned with recovery of the $s$ dominant coefficients using $M$ samples. We use the CSA procedure detailed at the beginning of Section \ref{sec:christoffel-sparse-approximation} for sampling, which proposes the following sampling densities for $v$:
\begin{enumerate}[label=(CSA-\alph*)]
  \item When $\rv$ has a Beta distribution with shape parameters $\beta+1, \alpha+1 \geq \frac{1}{2}$ (corresponding to Jacobi polynomial parameters $\alpha, \beta \geq - \frac{1}{2}$), we sample from the Chebyshev density \eqref{eq:chebyshev-density}. This corresponds to $n$-independent sampling according to $v_n \equiv v$ defined in \eqref{eq:chebyshev-density} and $S_n \equiv [-1,1]$.
  \item When $\rv$ is a two-sided exponential random variable ($\rv \in \R$) with density $w(\rvd) = \exp(-|\rvd|^\alpha)$, for $\alpha > 1$, we sample from the expanded equilibrium measure whose density is $v_n(\rvd)$ with support $S_n$, defined in \eqref{eq:whole-line-sampling-density} and \eqref{eq:whole-sn}, respectively.
  \item When $\rv$ is a one-sided exponential random variable ($\rv \in [0, \infty)$) with density $w(\rvd) = \exp(-|\rvd|^\alpha)$, for $\alpha > \frac{1}{2}$, we sample from the expanded equilibrium measure whose density is $v_n(\rvd)$ with support $S_n$, defined in \eqref{eq:half-line-sampling-density} and \eqref{eq:half-sn}, respectively.
\end{enumerate}
In any of the three cases above, our convergence statement requires definition of an additional matrix, which is a Gramian matrix corresponding to the Christoffel function weighted PCE basis elements $\phi_k$ 
\begin{align}\label{eq:R-definition}
  R_{k,\ell} &= \int_{S_n} \phi_{k-1}(\rvd) \phi_{\ell-1}(\rvd) \left( N \lambda_n(\rvd)\right) v_n(\rvd) \dx{\rvd}, & 1 &\leq k, \ell \leq N = n+1.
\end{align}
Note that the matrix $\V{R}$ defined above is positive-definite, and any fixed entry of this matrix converges to the corresponding entry in the identity matrix as $n \rightarrow \infty$ \cite{Narayan_JJ_MC}. Our result below uses the induced $\ell^1$ norm for matrices, $\|\bs{A}\|_1$, which is the maximum $\ell^1$ vector norm of columns of $\bs{A}$. For the symmetric positive-definite matrix $\bR$, we use $\bR^{1/2}$ to denote its unique symmetric positive definite square root. 
\begin{theorem}\label{thm:csa-convergence}
  Suppose that $M$ sampling points $(\brv^{(1)} ,\ldots , \brv^{(M)})$ are drawn iid according to the equilibrium measure density $v_n$ associated with the probability measure $\pbwt$, and consider the $M\times N$ matrix $\vand$ with entries $\vand_{ij} = \phi_j(\brvsamp{i}),\;i\in[M],\; j\in[N]$ and the diagonal matrix $\bW$ with entries given by \eqref{eq:christoffel-function}.
Assume that the number of samples satisfies
\begin{equation}\label{eq:sample-count}
  M \ge L(n) \left\| \V{R}^{-1/2} \right\|^2_{1} s \log^3(s)\log(N),
\end{equation}
where $\V{R}$ is defined in \eqref{eq:R-definition} and $L(n)$ has the following behavior:
\begin{enumerate}[label=(CSA-\alph*)]
  \item \label{jacobi-bound} There is a constant $C = C(\alpha, \beta)$ such that uniformly in $n \geq 1$,
    \begin{align*}
      L(n) = C.
    \end{align*}
  \item \label{whole-exp-bound} There is a constant $C = C(\alpha)$ such that uniformly in $n \geq 1$,
    \begin{align*}
      L(n) = C n^{\max\left\{1/\alpha, 2/3\right\}} = \left\{ \begin{array}{cc} C n^{2/3}, & \alpha \geq \frac{3}{2} \\
                     C n^{1/\alpha}, & 1 < \alpha < \frac{3}{2} \end{array}\right.
    \end{align*}
  \item \label{half-exp-bound} There is a constant $C = C(\alpha)$ such that uniformly in $n \geq 1$,
    \begin{align*}
      L(n) = C n^{\max\left\{1/2\alpha, 2/3\right\}} = \left\{ \begin{array}{cc} C n^{2/3}, & \alpha \geq \frac{3}{4} \\
    C n^{1/\alpha}, & \frac{1}{2} < \alpha < \frac{3}{4} \end{array}\right.
    \end{align*}
\end{enumerate}
Then with probability exceeding $1-N^{-\gamma\log^3(s)}$ the following holds for all 
polynomials $p(x)=\sum_{j=1}\alpha_j\phi_j(x)$.
Suppose that noisy sample values $\boldf = (p(\brvsamp{1})+\eta_1, \ldots,
p(\brvsamp{M})+\eta_M)^T = \vand\coef + \bseta$ are observed,
with $\norm{\bW\bseta}{\infty} \le\stoptol$. Then the coefficient vector 
$\bR^{1/2} \coef$ is recoverable to within
a factor of its best $s$-term approximation error and to a factor of the noise
level by solving the inequality-constrained $\ell^1$-minimization problem
\begin{equation}\label{eq:csa-weighted-minimization}
  \bR^{1/2} \coef^\star  = \argmin_{\coef}\; \|\bR^{1/2} \coef\|_1\quad \text{such that}\quad
  \|\sqrt{\bW}\vand\coef - \sqrt{\bW}\boldf\|_2 \le \stoptol
\end{equation}
The error between $\coef$ and the recovered solution $\coef^\star$ satisfies
\begin{subequations}\label{eq:csa-convergence}
  \begin{align}\label{eq:csa-convergence-1}
    \norm{\coef-\coef^\star}{2} &\le \frac{C_1\sigma_s\left(\bR^{1/2} \coef\right)_1}{\sqrt{s \lambda_{\mathrm{min}}\left(\bR\right)}}+C_2\frac{\stoptol}{\sqrt{\lambda_{\mathrm{min}}\left(\bR\right)}} \\
\label{eq:csa-convergence-2}
\norm{\coef-\coef^\star}{1} &\le D_1\sigma_s\left(\bR^{1/2} \coef \right)_1 \left\|\bR^{-1/2} \right\|_1 + D_2\sqrt{s} \left\|\bR^{-1/2}\right\|_1 \stoptol
\end{align}
\end{subequations}
where $\sigma_s(z)_p=\inf_{\{\by:\norm{\by}{0}\le s\}}\norm{\by-\bz}{p}$ is the 
best $s$-term approximation of a vector $\bz\in\mathbb{R}^N$ in $\ell_p$ norm,
 and $C_1,C_2$ are universal constants .
\end{theorem}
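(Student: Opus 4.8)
The plan is to deduce the theorem from the standard recovery result for bounded orthonormal systems stated in \eqref{eq:bounded-orthonormal-system-recovery-condition}, after replacing the Christoffel-weighted dictionary $\sqrt{N\lambda_n}\,\phi_k$ by a genuinely $v_n$-orthonormal one. First I would set
\[
  \psi_j(\rvd) = \sum_{\ell=1}^{N} \left(\bR^{-1/2}\right)_{j\ell}\,\sqrt{N\lambda_n(\rvd)}\,\phi_{\ell-1}(\rvd), \qquad 1 \le j \le N,
\]
and check, using the definition \eqref{eq:R-definition} of $\bR$ together with $\bR^{-1/2}\bR\,\bR^{-1/2} = \V{I}$, that $\{\psi_j\}_{j=1}^N$ is orthonormal in $L^2(v_n)$. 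Inverting the relation gives $\sqrt{N\lambda_n}\,\phi_{\ell-1} = \sum_j (\bR^{1/2})_{\ell j}\psi_j$, so the $\psi$-coordinates of $\sqrt{N\lambda_n}\,p$ for $p = \sum_\ell \alpha_\ell \phi_{\ell-1}$ are exactly $\bR^{1/2}\coef$; this is why the conclusion recovers $\bR^{1/2}\coef$ rather than $\coef$, and why the program \eqref{eq:csa-weighted-minimization} is precisely $\ell^1$ recovery for the orthonormal system $\{\psi_j\}$ sampled at the $v_n$-points, with noise $\sqrt{\bW}\bseta$ and tolerance $\stoptol$.

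Given this reduction, I would invoke the RIP-based guarantee of \cite{Rahut_TFNMSR_2010}: if the coherence $K := \max_{1\le j\le N}\|\psi_j\|_{\infty,S_n}^2$ and the sample count satisfy $M \gtrsim K\, s\log^3(s)\log(N)$, then with the stated probability the $\ell^1$ solution obeys the generic best-$s$-term-plus-noise error bounds. The coherence is controlled by the triangle inequality: since $\sqrt{N\lambda_n(\rvd)}\,|\phi_{\ell-1}(\rvd)| \le \sqrt{L(n)}$ with
\[
  L(n) = \max_{0\le k\le n}\ \sup_{\rvd\in S_n}\ N\lambda_n(\rvd)\,\phi_k^2(\rvd),
\]
and $\bR^{-1/2}$ is symmetric (so its maximal row $\ell^1$ norm equals $\|\bR^{-1/2}\|_1$), one gets $\|\psi_j\|_{\infty,S_n} \le \sqrt{L(n)}\,\|\bR^{-1/2}\|_1$, hence $K \le \|\bR^{-1/2}\|_1^2\,L(n)$. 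This reproduces the sample-count factor in \eqref{eq:sample-count}. The error estimates \eqref{eq:csa-convergence-1}--\eqref{eq:csa-convergence-2} then follow by applying the standard bounds to $\bR^{1/2}\coef$ and transporting them back via $\|\coef-\coef^\star\|_2 \le \lambda_{\min}(\bR)^{-1/2}\|\bR^{1/2}(\coef-\coef^\star)\|_2$ and $\|\coef-\coef^\star\|_1 \le \|\bR^{-1/2}\|_1\|\bR^{1/2}(\coef-\coef^\star)\|_1$, which accounts for the factors $\lambda_{\min}(\bR)$ and $\|\bR^{-1/2}\|_1$ appearing there.

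All quantitative content is therefore concentrated in the three bounds on $L(n)$, and I expect this to be the main obstacle: it requires sharp, uniform-in-$(k,\rvd)$ control of the weighted orthonormal polynomials, which I would write in the ratio form $N\lambda_n(\rvd)\phi_k^2(\rvd) = N\,(\sqrt{w}\,\phi_k)^2(\rvd)\big/\sum_{j=0}^n(\sqrt{w}\,\phi_j)^2(\rvd)$. For case~\ref{jacobi-bound}, I would combine the classical uniform bound $\sup_\rvd (1-\rvd^2)^{1/2}w(\rvd)\phi_k^2(\rvd)\lesssim 1$ for orthonormal Jacobi polynomials with the M\'at\'e--Nevai--Totik asymptotics $n\lambda_n(\rvd)\sim \pi\sqrt{1-\rvd^2}\,w(\rvd)$; their product is $O(1)$ uniformly, and the endpoint neighborhoods, where the bulk asymptotics degenerate, are absorbed by the corresponding uniform Bessel-type bounds, giving $L(n)=O(1)$. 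For case~\ref{whole-exp-bound}, I would use the Levin--Lubinsky strong asymptotics and Christoffel-function estimates for the Freud weight $\exp(-|\rvd|^\alpha)$ \cite{levin_orthogonal_2001}, identifying two extremal regimes: near the Mhaskar--Rakhmanov--Saff endpoints $\pm a^W_n \sim \pm n^{1/\alpha}$ the top-degree polynomials display Airy-type soft-edge scaling contributing $\sim n^{2/3}$, while at interior points $\rvd = O(1)$ the Christoffel weight satisfies $N\lambda_n(\rvd)\sim a^W_n \sim n^{1/\alpha}$ against low-degree $\phi_k = O(1)$, contributing $\sim n^{1/\alpha}$; the larger of the two yields $L(n)\sim n^{\max\{1/\alpha,2/3\}}$. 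For case~\ref{half-exp-bound}, rather than redoing the analysis I would invoke the quadratic substitution $\rv = 2^{1/\alpha}Y^2$ recorded after \eqref{eq:equilibrium-measure-half-line}, which maps the half-line problem with parameter $\alpha$ to the whole-line problem with parameter $2\alpha$; substituting $2\alpha$ for $\alpha$ in the previous case gives $L(n)\sim n^{\max\{1/2\alpha,2/3\}}$ with threshold $\alpha=\tfrac34$. The genuinely delicate point throughout is capturing the sharp soft-edge exponent $2/3$ and verifying it is overtaken by the interior contribution exactly when $1/\alpha > 2/3$; this edge phenomenon is also precisely why the unbounded cases cannot reach the optimal $M\gtrsim s$ sampling enjoyed by the bounded case, where no soft edge or dilation is present.
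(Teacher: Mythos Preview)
Your proposal is correct and follows essentially the same route as the paper: the paper likewise orthonormalizes via $\bR^{-1/2}$, bounds the coherence of the resulting system by $\|\bR^{-1/2}\|_1^2\,L(n)$ (using symmetry of $\bR^{-1/2}$ to identify the $\|\cdot\|_1$ and $\|\cdot\|_\infty$ induced norms), invokes the bounded-orthonormal-system recovery theorem of \cite{Rahut_TFNMSR_2010}, and transports the error estimates back to $\coef$ through exactly the two inequalities you wrote. The only minor deviation is in case~\ref{half-exp-bound}, where the paper re-runs the Levin--Lubinsky analysis directly with half-line estimates rather than reducing to the whole-line case via the quadratic substitution.
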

The proof is essentially just a convergence result for sparse solutions from a basis pursuit algorithm using bounds on an orthonormal system as established in \cite{Rahut_TFNMSR_2010}, with the required bounds on Christoffel-weighted polynomials provided in Theorems \ref{thm:bound-csa} later. The full proof of the above Theorem is presented in Appendix \ref{sec:appendix-csa-convergence}.

\begin{remark}
  We expect that the case corresponding to the one-dimensional Beta distribution (Jacobi polynomials) can actually be extended to almost any kind of bounded weight function on a compact interval. (The essential results are in, e.g., \cite{erdelyi_generalized_1992}.) 
\end{remark}
\begin{remark}
  For the CSA-b case, the behavior of $L$ with respect to $n$ is sharp; i.e., when $\alpha \geq \frac{3}{2}$ the $n^{2/3}$ behavior is sharp, and when $\alpha < \frac{3}{2}$ the $n^{1/\alpha}$ behavior is sharp. See the comments following Theorem \ref{thm:bound-csa-b}. A similar statement holds for the CSA-c case.
\end{remark}
\begin{remark}
  The above results generalize to tensor-product domains and weights, if one uses tensor-product sampling from the respective univariate densities. Then the sample count criterion in \eqref{eq:sample-count} is the same, but the factor $L(n)$ is a product of $d$ such factors, each corresponding to the respective univariate bound.
\end{remark}

Note that the result is stated in terms of recovery of $\bs{R}^{1/2} \coef$, and not $\coef$. This is done because the PCE basis is only orthonormal under $N\lambda_N(\rvd) v_n(\rvd)$ when transformed according to the Gramian $\bs{R}$. Note that the actual CSA algorithm described in Section \ref{sec:christoffel-sparse-approximation} performs recovery of $\coef$, which is not the statement of the Theorem above. 

Nevertheless, we empirically observe that $\bs{R}^{1/2}$ is not only close to the identity \cite{Narayan_JJ_MC}, but also that has an approximately sparse representation. Thus, our empirical observation is that minimizing with the objective $\left\|\bs{R}^{1/2} \coef\right\|_1$ is similar to minimizing with objective $\left\|\coef\right\|_1$. When $\bs{R}$ is sparse and close to the identity, then $\sigma_s\left(\coef\right)_1$ and $\sigma_s\left(\bs{R}^{1/2} \coef\right)_1$ should likewise be similar. 

The penalty factor $\left\| \bs{R}^{-1/2}\right\|_1$ appearing in both the sampling condition \eqref{eq:sample-count} and the estimate \eqref{eq:csa-convergence-2} is likewise small. We show various values for this penalty in Figure \ref{fig:Rnorms}, which we observe to be $\mathcal{O}(1)$ for various densities $w$ on both bounded and unbounded domains. The term $\lambda_{\mathrm{min}}\left(\bs{R}\right)$ appearing in \eqref{eq:csa-convergence-1} is computed empirically in \cite{Narayan_JJ_MC} and observed to be $\mathcal{O}(1)$ for essentially all values of $n$ and all relevant shape parameters.

\begin{figure}
  \begin{center}
    \includegraphics[width=\textwidth]{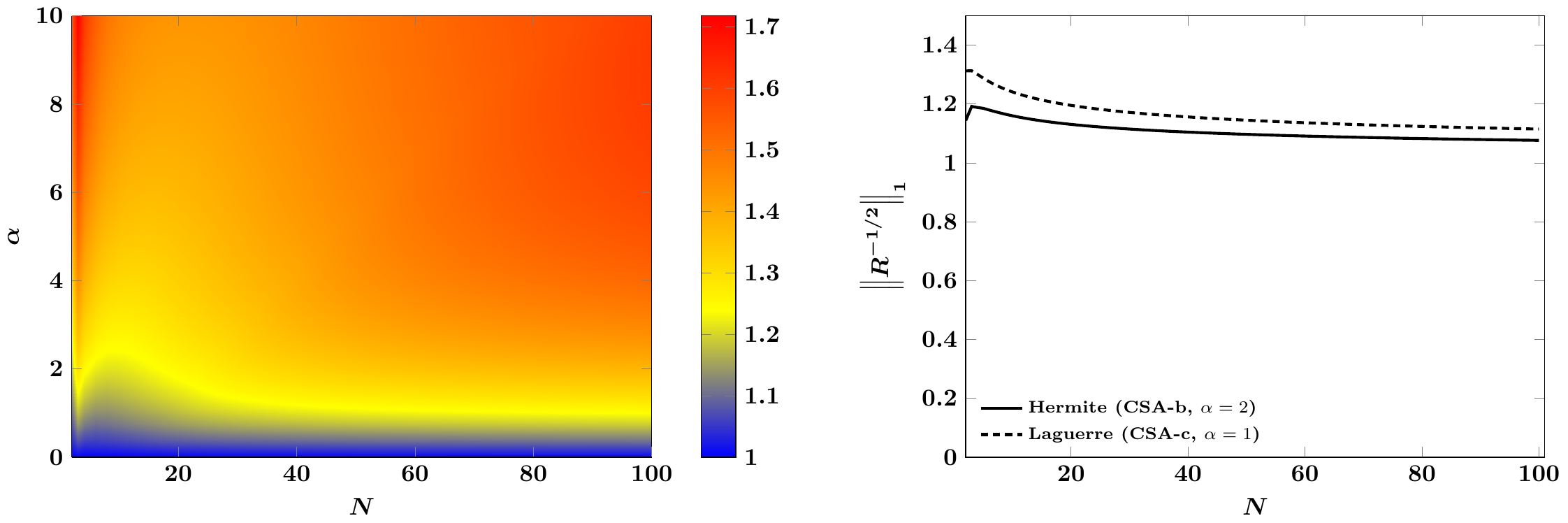}
  \end{center}
  \caption{Left: $\left\|\bR^{-1/2}\right\|_1$ for Jacobi polynomials with symmetric parameters $\alpha = \beta$. Right: $\left\|\bR^{-1/2}\right\|_1$ for two exponential-type densities: Hermite polynomials with $\pbwt = \exp(-x^2)$ on $\R$, and Laguerre polynomials with $\pbwt = \exp(-x)$ on $[0, \infty)$.}\label{fig:Rnorms}
\end{figure}

To motivate our observation that $\bs{R}$ is in fact quite close to the identity, we have the following optimal result for $Z$ a bounded uniform random variable, corresponding to a PCE basis of Legendre polynomials.
\begin{corollary}\label{cor:legendre-csa-convergence}
  Assume the setup of Thereom \ref{thm:csa-convergence} in the CSA-a (bounded) case, with parameters $\alpha = \beta = 0$. This corresponds to $Z$ a uniform random variable with a Legendre PCE basis. Then assuming
  \begin{align*}
    M \ge C s \log^3(s)\log(N),
  \end{align*}
  the solution $\coef^\star$ to 
  \begin{align*}
      \coef^\star  = \argmin_{\coef}\; \|\coef\|_1\quad \text{such that}\quad
  \|\sqrt{\bW}\vand\coef - \sqrt{\bW}\boldf\|_2 \le \stoptol
  \end{align*}
  satisfies
  \begin{align*}
    \norm{\coef-\coef^\star}{2} &\le \frac{C_1\sigma_s\left(\coef\right)_1}{\sqrt{s}}+C_2\stoptol \\
    \norm{\coef-\coef^\star}{1} &\le D_1\sigma_s\left(\coef \right)_1 + D_2\sqrt{s} \stoptol
\end{align*}
\end{corollary}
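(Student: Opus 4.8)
The plan is to read the corollary off Theorem~\ref{thm:csa-convergence} in the bounded case~\ref{jacobi-bound} with $\alpha=\beta=0$, where the theorem already supplies the uniform bound $L(n)=C$. The only discrepancy between the general estimate and the sharpened statement claimed here is the Gramian $\bR$ of~\eqref{eq:R-definition}: the theorem recovers $\bR^{1/2}\coef$ rather than $\coef$, measures the best $s$-term error by $\sigma_s(\bR^{1/2}\coef)_1$, and carries the penalty factors $\left\|\bR^{-1/2}\right\|_1$ and $\lambda_{\mathrm{min}}(\bR)$. Consequently the entire corollary reduces to the single exact identity
\[
  \bR = I_N,
\]
valid for the orthonormal Legendre basis sampled from the Chebyshev density~\eqref{eq:chebyshev-density}. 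Granting this, $\bR^{1/2}=\bR^{-1/2}=I_N$, $\lambda_{\mathrm{min}}(\bR)=1$ and $\left\|\bR^{-1/2}\right\|_1=1$, so \eqref{eq:sample-count} collapses to $M\ge C s\log^3(s)\log(N)$, the objective $\left\|\bR^{1/2}\coef\right\|_1$ becomes $\left\|\coef\right\|_1$, and $\sigma_s(\bR^{1/2}\coef)_1=\sigma_s(\coef)_1$; inserting these into \eqref{eq:csa-convergence-1}--\eqref{eq:csa-convergence-2} reproduces verbatim the two asserted error bounds, with the universal constants unchanged.

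Establishing $\bR=I_N$ is the crux and, I expect, the main obstacle. Writing the orthonormal Legendre polynomials as $\phi_k=\sqrt{2k+1}\,P_k$, the identity says precisely that the Christoffel-weighted polynomials $\sqrt{N\lambda_n}\,\phi_k$ are orthonormal against the arcsine measure,
\[
  \int_{-1}^1 \phi_k(z)\,\phi_\ell(z)\,\frac{N\lambda_n(z)}{\pi\sqrt{1-z^2}}\,\dx{z}=\delta_{k\ell},\qquad 0\le k,\ell\le n.
\]
I would first clear every entry with $k+\ell$ odd by parity, since $\lambda_n$ and the sampling density are even while $\phi_k\phi_\ell$ then has odd parity. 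For the surviving entries the cleanest route is to recast the claim as moment matching: with $1/\lambda_n=\sum_{j=0}^n(2j+1)P_j^2$ put in closed form by the confluent Christoffel--Darboux formula, $\bR=I_N$ is equivalent to the assertion that the measure $N\lambda_n(z)\,\tfrac{\dx{z}}{\pi\sqrt{1-z^2}}$ matches all moments of the uniform density $\tfrac12$ up to degree $2n$. The delicate point is that no pointwise identity is available --- one checks $N\lambda_n(z)\,v(z)\ne\tfrac12$ already at $z=0$ for $n=1$ --- so the equality holds only after integration, through cancellations special to the Legendre weight. This is where the genuine work lies; I would either carry out the explicit evaluation using Legendre-specific identities, or invoke the corresponding exact statement for this Gramian from~\cite{Narayan_JJ_MC}.

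With $\bR=I_N$ in hand the argument closes by the substitution of the first paragraph: every hypothesis and conclusion of Theorem~\ref{thm:csa-convergence} specializes exactly to the sample count and the two error estimates of the corollary, the factor $L(n)=C$ being absorbed into the leading constant of the sampling condition.
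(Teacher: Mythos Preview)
Your reduction is exactly the paper's: specialize Theorem~\ref{thm:csa-convergence} in the CSA-a case and observe that everything collapses once $\bR=I_N$. The paper does not attempt the direct moment-matching computation you sketch; it simply cites \cite{bos_orthogonality_2015} (not \cite{Narayan_JJ_MC}) for the identity $\bR=I$ and is done in two sentences.
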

\begin{proof}
  Under the assumption that $Z$ is a scalar uniform random variable, the main result of \cite{bos_orthogonality_2015} states that $\bs{R} = \bs{I}$ for all $n$. Application of Theorem \ref{thm:csa-convergence} with $\bs{R} = \bs{I}$ is the desired conclusion.
\end{proof}

\section{Bounds on Christoffel-weighted polynomials}
Theorem \ref{thm:csa-convergence} requires knowledge of bounds on Christoffel-weighted polynomials. We provide these bounds here, but leave the proofs to the appendix due to their technical nature. 

As with Section \ref{sec:1d-analysis}, we restrict our attention to the univariate case here, with a dictionary defined by \eqref{eq:univariate-dictionary}. Given this definition of $\Lambda$, the Christoffel function $\lambda_\Lambda$ is uniquely defined by \eqref{eq:christoffel-function}. For the univariate case, we express expicit dependence on the maximum polynomial degree $n$ by writing
\begin{align*}
  \lambda_{n+1}(\rvd) = \frac{1}{\sum_{k=0}^n \phi_k^2(\rvd)}
\end{align*}
We use the subscript $n+1$ to keep consistency with other literature. This notation is used in all of the following theorems. As before, we ignore any normalization factors that are necessary to make $w$ a probability density since their presence does not affect the results in any way.
\begin{subtheorem}{theorem}\label{thm:bound-csa}
\begin{theorem}\label{thm:bound-csa-a}
  Assume $Z$ is a Beta-distributed random variable on $[-1,1]$, with shape parameters $\alpha, \beta \geq \frac{1}{2}$, having density
  \begin{align*}
    w(z) = (1-z)^{\beta-1} (1+z)^{\alpha-1}.
  \end{align*}
  Then, uniformly in $n$, 
  \begin{align}\label{eq:bound-csa-a}
    L(n) = \max_{0 \leq k \leq n} \sup_{z \in [-1,1]} (n+1) \lambda_{n+1}(z) \phi_k^2(z) \leq C
  \end{align}
\end{theorem}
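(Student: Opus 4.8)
The plan is to rewrite the target quantity in a form that exposes exactly what must be controlled. Writing $K_n(z) = \sum_{j=0}^n \phi_j^2(z)$ for the diagonal of the Christoffel--Darboux kernel, so that $\lambda_{n+1}(z) = 1/K_n(z)$, the bound \eqref{eq:bound-csa-a} is \emph{equivalent} to the statement
\[
\max_{0\le k\le n}\, \phi_k^2(z) \;\le\; \frac{C}{n+1}\,\sum_{j=0}^n \phi_j^2(z), \qquad z\in[-1,1],
\]
uniformly in $n$. In words, the largest squared basis function at $z$ is at most a constant multiple of the average of all squared basis functions at $z$. Thus the two ingredients I need are (i) a uniform upper bound on each $\phi_k^2(z)$, and (ii) a matching lower bound on the kernel diagonal $K_n(z)$ that is $(n+1)$ times as large.

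For ingredient (i) I would invoke the classical pointwise estimates for orthonormal Jacobi polynomials. Writing $z=\cos\theta$ and recalling that the weight is $w(z)=(1-z)^{\beta-1}(1+z)^{\alpha-1}$ (Jacobi exponents $\beta-1,\alpha-1 \ge -\tfrac12$), these estimates split the interval into an oscillatory ``bulk'' region, where the Szeg\H{o} envelope bound gives
\[
w(z)\sqrt{1-z^2}\,\phi_k^2(z) \le C \qquad \text{uniformly in } k \text{ and } z,
\]
and two endpoint layers of width $\sim 1/k$ about $z=\pm1$, where $\phi_k^2$ is governed by Bessel-type asymptotics and grows like $k^{2\beta-1}$ near $z=1$ (resp.\ $k^{2\alpha-1}$ near $z=-1$). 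The hypothesis $\alpha,\beta\ge\tfrac12$ is exactly what makes the envelope in the bulk uniform in $k$ and the endpoint growth no worse than $k^{2\beta-1}$; this is the precise threshold (for exponents below $-\tfrac12$ the endpoint values dominate and $L(n)$ would acquire a power of $n$), which is why the stated bound is sharp.

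For ingredient (ii) I would use the known two-sided bounds for the Christoffel function of Jacobi weights (Nevai; M\'at\'e--Nevai--Totik for the sharp constant), which in the bulk read $(n+1)\lambda_{n+1}(z)\asymp w(z)\sqrt{1-z^2}$ --- precisely the reciprocal of the bulk envelope from (i) --- so that dividing yields an $\mathcal{O}(1)$ ratio there. On the endpoint layers I would instead estimate $K_n(z)$ directly: near $z=1$ one has $\phi_j^2(1)\sim j^{2\beta-1}$, hence $K_n(1)\sim \sum_{j=0}^n j^{2\beta-1}\sim n^{2\beta}$, and since $\phi_k^2(1)$ is increasing in $k$ the worst ratio is
\[
\frac{(n+1)\,\phi_n^2(1)}{K_n(1)} \;\lesssim\; \frac{n\cdot n^{2\beta-1}}{n^{2\beta}} \;=\; 1 ,
\]
with continuity and monotonicity estimates used to pass from the exact endpoint into the surrounding layer. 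The main obstacle is the transition region between the bulk and the endpoint layers, where neither the clean Szeg\H{o} envelope nor the exact-endpoint computation applies verbatim; here I would match the two regimes using the uniform two-sided Christoffel bounds $\lambda_{n+1}(z)\asymp \tfrac1n\, w(z)\big(\sqrt{1-z^2}+\tfrac1n\big)$, valid across the whole interval for Jacobi weights, together with the corresponding uniform polynomial bounds, so that the ratio stays $\mathcal{O}(1)$ throughout. Collecting the three regions then produces a single constant $C=C(\alpha,\beta)$ independent of $n$, which is the assertion.
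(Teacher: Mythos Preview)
Your proposal is correct in substance but takes a more laborious route than the paper. The paper avoids region-splitting entirely by a monotonicity trick. It cites two black-box Jacobi results: (a) Nevai's \emph{diagonal} bound $n\lambda_n(z)\phi_{n-1}^2(z)\le C$ uniformly in $n$ and $z$ (this is the $k=n-1$ special case of what you want), and (b) the global two-sided Christoffel estimate $n\lambda_n(z)\asymp\rho_n(z)$ on all of $[-1,1]$, where
\[
\rho_n(z)=\bigl(\sqrt{1-z}+\tfrac1n\bigr)^{2\beta-1}\bigl(\sqrt{1+z}+\tfrac1n\bigr)^{2\alpha-1}.
\]
The entire proof is then the observation that, since both exponents $2\beta-1$ and $2\alpha-1$ are nonnegative (exactly the hypothesis $\alpha,\beta\ge\tfrac12$), the map $n\mapsto\rho_n(z)$ is nonincreasing; hence $n\lambda_n(z)\lesssim\rho_n(z)\le\rho_k(z)\lesssim k\lambda_k(z)$ for every $k\le n$, and the general $(n,k)$ case collapses immediately to the diagonal case (a). No bulk/endpoint/transition analysis is needed.

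Your ``uniform two-sided Christoffel bounds'' are precisely (b), and the ``corresponding uniform polynomial bounds'' you would invoke for the transition region are essentially (a) combined with (b). So you already have all the ingredients; the point you are missing is that the monotonicity of $\rho_n$ in $n$ makes them suffice on the whole interval at once, eliminating the matching problem you correctly flagged as the main obstacle. What your approach buys is a more mechanistic picture of why the ratio stays $\mathcal{O}(1)$ in each regime; what the paper's approach buys is a three-line proof once (a) and (b) are quoted.
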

The uniform boundedness of these weighted polynomials has already been investigated for compressive sampling using essentially the same kind of weight \cite{Rauhut_W_JAT_2012}. The proof is contained in Appendix \ref{sec:appendix-csa-a}. 

We now consider the more difficult CSA-b case where $Z$ has a two-sided exponential density.
\begin{theorem}\label{thm:bound-csa-b}
  Assume $Z$ is an exponentially-distributed random variable on $\R$, having density with shape parameter $\alpha > 1$ given by 
  \begin{align*}
    w(\rvd) = \exp\left(-|\rvd|^\alpha\right).
  \end{align*}
  Then, uniformly in $n$, 
  \begin{align*}
    L(n) = \max_{0 \leq k \leq n} \sup_{z \in S^W(\sqrt{w})_n} (n+1) \lambda_{n+1}(z) \phi_k^2(z) \leq C_1 n^{p(\alpha)},
  \end{align*}
  where 
  \begin{align}\label{eq:p-definition}
  \arraycolsep=1.4pt\def\arraystretch{1.4}
  p(\alpha) = \max\left\{ \frac{2}{3}, \frac{1}{\alpha} \right\} = \left\{ \begin{array}{cc} \frac{2}{3}, & \alpha \geq \frac{3}{2} \\ \frac{1}{\alpha}, & \alpha < \frac{3}{2} \end{array}\right.
  \end{align}
\end{theorem}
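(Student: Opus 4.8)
The plan is to peel off the Christoffel weight and reduce the claim to a coherence-type estimate for the $L^2(\R)$-orthonormal weighted polynomials $\sqrt{w}\,\phi_k$. Writing the Christoffel function as $\lambda_{n+1}(z) = \bigl(\sum_{j=0}^n\phi_j^2(z)\bigr)^{-1}$ and multiplying numerator and denominator by $w(z)$, I would first record the identity
\begin{equation*}
  (n+1)\lambda_{n+1}(z)\phi_k^2(z) = (n+1)\frac{\bigl(\sqrt{w(z)}\,\phi_k(z)\bigr)^2}{\sum_{j=0}^n \bigl(\sqrt{w(z)}\,\phi_j(z)\bigr)^2}.
\end{equation*}
The functions $\sqrt{w}\,\phi_j$ are orthonormal with respect to Lebesgue measure, and their diagonal Christoffel--Darboux kernel $\sum_{j=0}^n(\sqrt{w}\,\phi_j)^2$ is, up to the factor $(n+1)$, asymptotically the sampling density: as discussed in Section~\ref{sec:christoffel-sparse-approximation} (and made precise via the Christoffel-function asymptotics for exponential weights in \cite{levin_orthogonal_2001,Narayan_JJ_MC}), one has $\sum_{j=0}^n(\sqrt{w}\,\phi_j)^2(z)\asymp (n+1)\,v_n(z)$ throughout the bulk of $S^W_n$ (the narrow soft-edge transition region $|z|\approx a^W_n$, of width $\asymp a^W_n n^{-2/3}$, being handled separately below). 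Hence the quantity to be bounded is, up to constants, $\sup_{z,\,k}\bigl(\sqrt{w}\,\phi_k\bigr)^2(z)/v_n(z)$, and since $v_n(z)\asymp n^{-2/\alpha}\sqrt{(a^W_n)^2-z^2}$ by \eqref{eq:whole-line-sampling-density} and the two-sided density bound quoted after \eqref{eq:whole-line-sampling-density}, it suffices to prove
\begin{equation*}
  n^{2/\alpha}\,\sup_{z\in S^W_n}\frac{\max_{0\le k\le n}\bigl(\sqrt{w}\,\phi_k\bigr)^2(z)}{\sqrt{(a^W_n)^2-z^2}} \le C\, n^{p(\alpha)}.
\end{equation*}

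The second ingredient is a pointwise envelope for the weighted orthonormal polynomials, imported from the Levin--Lubinsky theory of exponential weights \cite{levin_orthogonal_2001}. The relevant facts are: (i) in the bulk $|z|\le a^W_k$ one has the Plancherel--Rotach-type bound $(\sqrt{w}\,\phi_k)^2(z)\lesssim \bigl((a^W_k)^2 - z^2\bigr)^{-1/2}$; (ii) across the whole line the supremum is attained in the soft-edge (Airy) region $|z|\approx a^W_k$ and satisfies $\|\sqrt{w}\,\phi_k\|_\infty^2\asymp (a^W_k)^{-1}k^{1/3}\asymp k^{1/3-1/\alpha}$; and (iii) for $|z|>a^W_k$ the weighted polynomial decays. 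Combining (i)--(iii), for a fixed $z$ the maximizing degree is the $k$ whose turning point $a^W_k$ lies nearest $|z|$, which yields the pointwise bound $\max_{0\le k\le n}(\sqrt{w}\,\phi_k)^2(z)\lesssim \max\{1,\,|z|^{\alpha/3-1}\}$, with the Airy value $\asymp n^{1/3-1/\alpha}$ at the endpoints $z\approx \pm a^W_n$.

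With both ingredients in hand I would substitute $z = a^W_n\,\tau$ with $\tau\in(0,1]$. The bulk envelope turns the target into an $\alpha$-dependent constant times $n^{1/3}\,h(\tau)$ with $h(\tau)=\tau^{\alpha/3-1}/\sqrt{1-\tau^2}$ (all remaining $n$-powers cancel), so the problem reduces to maximizing $h$ over the admissible range of $\tau$, which I would split into three regimes. Near the soft edge $\tau\uparrow 1$ the factor $1/\sqrt{1-\tau^2}$ is controlled by the edge-of-bulk cutoff $1-\tau\gtrsim n^{-2/3}$, contributing $h\lesssim n^{1/3}$ and hence $n^{2/3}$; inside the transition region $|z|\in[a^W_n(1-cn^{-2/3}),a^W_n]$ one instead uses the saturated Airy value (ii) together with the value of $\sqrt{(a^W_n)^2-z^2}$ at the edge of the bulk, again giving $n^{2/3}$. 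Near the center $\tau\downarrow \tau_0\asymp n^{-1/\alpha}$ the factor $\tau^{\alpha/3-1}$ blows up only when $\alpha<3$, contributing $n^{1/\alpha-1/3}$ and hence $n^{1/\alpha}$; the intermediate range contributes at most $n^{1/3}$. Collecting the three regimes yields the exponent $\max\{2/3,\,1/\alpha\}=p(\alpha)$, and transparently exhibits the threshold $\alpha=\tfrac32$: the soft edge governs the bound for $\alpha\ge\tfrac32$ and the center for $1<\alpha<\tfrac32$.

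The main obstacle is ingredients (ii)--(iii): establishing the uniform pointwise asymptotics for $\sqrt{w}\,\phi_k$ through the soft-edge turning point, uniformly in both the evaluation point $z$ and the degree $k\le n$, together with the matching lower bound $\sum_{j=0}^n(\sqrt{w}\,\phi_j)^2\gtrsim (n+1)v_n$ that must not degenerate at $z\approx\pm a^W_n$. These require the full Mhaskar--Rakhmanov--Saff/Levin--Lubinsky machinery rather than elementary estimates, which is why I would defer the detailed asymptotic input to the appendix; once it is granted, the region bookkeeping above is routine.
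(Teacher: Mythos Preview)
Your approach is correct and relies on the same Levin--Lubinsky inputs as the paper (Christoffel-function asymptotics $\lambda_n\asymp w\varphi_n$, the sup-norm estimate $\|\sqrt{w}\,\phi_k\|_\infty^2\asymp k^{1/3}/a^W_k$, and restricted-range decay outside $S^W_k$), but the organization is genuinely different. The paper splits the \emph{degree} range: it first shows the crude bound $(kn)^{1/3}(n/k)^{1/\alpha}$ uniformly in $k$ (Lemma~\ref{lemma:whole-exponential-bound}), which already gives $n^{2/3}$ for $k\ge\delta n$; then for low degrees $k<\delta n$ it sharpens this to $(n/k)^{1/\alpha}k^{1/3}$ by splitting the \emph{spatial} domain into $S_{\tau n}$ (where $\varphi_n$ is bounded by $c\,a_n/n$ rather than the edge value $c\,a_n n^{-2/3}$) and its complement (where $\sqrt{w}\,\phi_k$ is exponentially small by restricted range). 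You instead maximize over $k$ first for each fixed $z$, producing the pointwise envelope $\max_k(\sqrt{w}\,\phi_k)^2(z)\lesssim\max\{1,|z|^{\alpha/3-1}\}$, and then carry out a single one-variable optimization in $\tau=z/a^W_n$. Your route is more geometric and makes the competition between the soft-edge contribution $n^{2/3}$ and the center contribution $n^{1/\alpha}$ (hence the threshold $\alpha=3/2$) visible in one line; the paper's route is more modular, packaging the high-$k$ and low-$k$ estimates into separate lemmas that are then recycled verbatim for the half-line case (Theorem~\ref{thm:bound-csa-c}). One minor point: your ``intermediate range contributes at most $n^{1/3}$'' is correct only if ``intermediate'' means bounded away from both endpoints of $[\tau_0,1]$, since $h(\tau)$ can be large near either end when $\alpha<3$; but as you have already accounted for both endpoints separately this does not affect the argument.
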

Some remarks on this are in order: first, in the important $\alpha = 2$ corresponding to a PCE basis of Hermite polynomials, we have the bound $L(n) \sim n^{2/3}$, and thus this weighting does not produce a bound that is uniform in $n$. The bound above is sharp with respect to the prescription of $p(\alpha)$. For example with $k=n$, and using formulas \eqref{eq:christoffel-whole-sim}, \eqref{eq:whole-varphi-bound}, and \eqref{eq:poly-whole-sup} from Appendix \ref{sec:appendix-csa-b}, we have
\begin{align*}
  L(n) \geq \sup_{z \in S^W_n} \phi_n^2(z) (n+1) \lambda_{n+1}(z) \sim n^{2/3},
\end{align*}
So that the exponent $\frac{2}{3}$ is a lower bound. However for the special case $k=0$ at $z=0$, we use \eqref{eq:poly-whole-sup} and \eqref{eq:christoffel-whole-sim} to conclude
\begin{align*}
  L(n) \geq (n+1) \lambda_{n+1}(0) \phi_0^2(0) \sim a^W_n w(0) \phi_0^2(0) \sim n^{1/\alpha},
\end{align*}
and thus we have that $L(n)$ must be larger than at least $n^{1/\alpha}$ and $n^{2/3}$. It turns out that these are in fact the dominating behaviors and so $L(n)$ behaves exactly according to the maximum of the two bounds above. The proof of this is given in Appendix \ref{sec:appendix-csa-b}. The one-sided CSA-c case is similar.
\begin{theorem}\label{thm:bound-csa-c}
  Assume $Z$ is an exponentially-distributed random variable on $[0, \infty)$, having density with shape parameter $\alpha > \frac{1}{2}$ given by 
  \begin{align*}
    w(\rvd) = \exp\left(-|\rvd|^\alpha\right).
  \end{align*}
  Then, uniformly in $n$, 
  \begin{align*}
    L(n) = \max_{0 \leq k \leq n} \sup_{z \in S^W(\sqrt{w})_n} (n+1) \lambda_{n+1}(z) \phi_k^2(z) \leq C_1 n^{p(2 \alpha)},
  \end{align*}
  where $p(\cdot)$ is defined by \eqref{eq:p-definition} and $p(2\alpha)$ satisfies
  \begin{align}\label{eq:p2-definition}
  \arraycolsep=1.4pt\def\arraystretch{1.4}
  p(2\alpha) = \max\left\{ \frac{2}{3}, \frac{1}{2\alpha} \right\} = \left\{ \begin{array}{cc} \frac{2}{3}, & \alpha \geq \frac{3}{4} \\ \frac{1}{2\alpha}, & \alpha < \frac{3}{4} \end{array}\right.
  \end{align}
\end{theorem}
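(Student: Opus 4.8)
The plan is to prove Theorem~\ref{thm:bound-csa-c} in direct parallel to the whole-line Theorem~\ref{thm:bound-csa-b}, replacing the whole-line extremal asymptotics of \cite{levin_orthogonal_2001} by their half-line counterparts for Freud-type exponential weights on $[0,\infty)$ due to Levin and Lubinsky \cite{levin_orthogonal_2005}. The conceptual bridge to the CSA-b case is the quadratic change of variables $z = 2^{1/\alpha} y^2$, which sends the half-line weight $w(z)=\exp(-z^\alpha)$ to a symmetric exponential weight of the form $\exp(-|y|^{2\alpha})$ on $\R$. Since the effective exponent doubles from $\alpha$ to $2\alpha$, every scaling exponent appearing in the whole-line bound $n^{p(\alpha)}$ is replaced by its value at $2\alpha$, which is exactly why the statement carries $n^{p(2\alpha)}$ with $p(\cdot)$ given by \eqref{eq:p-definition} and $p(2\alpha)$ as in \eqref{eq:p2-definition}. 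This same substitution is the source of the equilibrium-measure identity recorded just before the theorem and of the sampling reduction $Z = 2^{1/\alpha}Y^2$, $Y\sim\mu^W_{\R,Q^2}$.

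First I would transport the three ingredients used in the CSA-b argument (the references \eqref{eq:christoffel-whole-sim}, \eqref{eq:whole-varphi-bound}, and \eqref{eq:poly-whole-sup} from Appendix~\ref{sec:appendix-csa-b}) to the half line. The key pointwise input is the Christoffel--equilibrium asymptotic $(n+1)\lambda_{n+1}(z)\sim w(z)/v_n(z)$, valid on $S^H_n$ and available from the generalized-Freud asymptotics of \cite{levin_orthogonal_2005}. Granting this, the quantity to be bounded becomes
\[
  (n+1)\,\lambda_{n+1}(z)\,\phi_k^2(z)\ \sim\ \frac{w(z)\,\phi_k^2(z)}{v_n(z)},
\]
so it suffices to bound the weighted polynomial $\sqrt{w}\,|\phi_k|$ from above and the sampling density $v_n$ from below, uniformly for $0\le k\le n$ and $z\in S^H_n$. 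Rescaling the base equilibrium density from $S^H$ to $S^H_n$ via \eqref{eq:half-line-sampling-density} (the factor $n^{-1/\alpha}$ together with the half-line MRS numbers $a^H_n$) is what converts these $\mathcal{O}(1)$ base-interval estimates into explicit powers of $n$.

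The two competing exponents then arise from two different regions. In the bulk and near the soft outer endpoint $a^H_n$, the weighted polynomial $\sqrt{w}\,|\phi_k|$ exhibits Airy-type edge growth, contributing the $n^{2/3}$ term exactly as in the whole-line case; dividing by the square-root vanishing of $v_n$ at that endpoint is absorbed into the same estimate. The normalization/dilation of the density, on the other hand, contributes the $n^{1/(2\alpha)}$ factor, and taking the maximum of the two over $S^H_n$ yields $n^{p(2\alpha)}$, with the crossover at $\alpha=\tfrac34$ dictated by which exponent dominates. This mirrors the lower-bound computation following Theorem~\ref{thm:bound-csa-b}, where the $k=n$ soft-edge case gives $n^{2/3}$ and the $k=0$, $z\to$ (endpoint) case gives the weight-dependent power.

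The main obstacle, and the part genuinely special to the half line, is the hard edge at $z=0$. Under $z=2^{1/\alpha}y^2$ this endpoint is mapped to the interior point $y=0$ of the whole line, so the Airy (soft-edge) asymptotics of Theorem~\ref{thm:bound-csa-b} do not transfer to it; instead one encounters Bessel-type hard-edge behavior, and the quadratic map simultaneously introduces generalized weight factors $x^{\pm1/2}$ that are not pure exponential weights. Controlling $\sqrt{w}\,|\phi_k|$ and the lower bound on $v_n$ uniformly up to $z=0$ therefore cannot be done by a black-box invocation of the whole-line theorem, but requires the Levin--Lubinsky estimates for generalized Freud weights on $[0,\infty)$ \cite{levin_orthogonal_2005} applied directly; this is precisely the step that produces the $n^{1/(2\alpha)}$ contribution in \eqref{eq:p2-definition}. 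Once the hard-edge behavior is settled, combining the bulk, soft-edge, and hard-edge bounds and taking the supremum over $0\le k\le n$ and $z\in S^H_n$ completes the argument, in exact analogy with Appendix~\ref{sec:appendix-csa-b}.
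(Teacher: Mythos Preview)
Your plan is essentially the paper's approach: Appendix~\ref{sec:appendix-csa-c} proceeds exactly by transporting the three whole-line ingredients to the half line via the Levin--Lubinsky estimates of \cite{levin_orthogonal_2005}, proving half-line analogues of Lemmas~\ref{lemma:whole-restricted-range}--\ref{lemma:whole-low-degree-bound} and then combining the low-degree and high-degree regimes to obtain $n^{p(2\alpha)}$. The quadratic substitution you emphasize is used in the paper only as heuristic motivation (and for sampling), not in the proof itself.

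One technical point you should sharpen: the relation $(n+1)\lambda_{n+1}(z)\sim w(z)/v_n(z)$ does \emph{not} hold uniformly on $S^H_n$, since $v_n$ vanishes at the soft edge $z=a^H_n$ and blows up at the hard edge $z=0$. The paper instead uses $\lambda_n(z)\sim w(z)\varphi_n(z)$ with a regularized $\varphi_n$ (see \eqref{eq:half-varphi-def}) that differs structurally from its whole-line counterpart by an extra factor $\sqrt{z+a_n n^{-2}}$. This factor is precisely what cancels against the matching factor in the half-line polynomial bound \eqref{eq:poly-half-sup-lower} near $z=0$, and it is this cancellation---reducing $a_n\sim n^{1/\alpha}$ to $\sqrt{a_n}\sim n^{1/(2\alpha)}$ in the inner-region estimate---that produces the exponent $1/(2\alpha)$ rather than $1/\alpha$. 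Your sketch correctly locates the $n^{1/(2\alpha)}$ contribution at the hard edge, but the mechanism is this explicit $\sqrt{z}$-cancellation in $\varphi_n$, not a separate lower bound on $v_n$.
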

The proof is given in Appendix \ref{sec:appendix-csa-c}. Just as with Theorem \ref{thm:bound-csa-b} the dependence on the exponent $p(2\alpha)$ is sharp, as can be observed by manipulating the cases $k=n$, and $k=0$ at $z=0$, by using \eqref{eq:christoffel-half-sim} and Lemmas \ref{lemma:poly-half-bounds} and \ref{lemma:half-varphi-estimate}. The result above holds for the more general case considering a family of polynomials orthonormal under the weight $w(\rvd) = \rvd^{\mu} \exp(-|\rvd|^\alpha)$ with $\alpha > \frac{1}{2}$ and $\mu \geq -\frac{1}{2}$. (That is, the exponent on $n$ is still $p(2\alpha)$, independent of $\mu$.) To establish this, one need only augment the proof in Appendix \ref{sec:appendix-csa-c} to include the $\rvd^\mu$ factor in the weight; the necessary results generalizing the cited Lemmas in \ref{sec:appendix-csa-c} are in \cite{kasuga_orthonormal_2003,levin_orthogonal_2005,levin_orthogonal_2006}.
\end{subtheorem}

%

\section{Results}\label{sec:results}
In the following we will numerically compare the performance of CSA with other popular sampling strategies for compressive sampling of PCE. Recall that the random variable $Z$ has probability density $w$, and we attempt to recover a sparse expansion from a multi-index dictionary $\Lambda$. We let $n$ denote the maximum univariate degree in $\Lambda$. I.e., 
\begin{align*}
  n = \max_{\lambda \in \Lambda} \max_{j=1, \ldots, d} \lambda_j
\end{align*}
We use the following terms to describe the recovery procedures tested in this section. 
\begin{itemize}
  \item CSA -- This is the algorithm proposed in this paper. We sample $Z_i$ as iid realizations of the weighted pluripotential equilibrium measure. I.e., we sample from the measure with associated density $v(z)$ as described in Sections \ref{sec:univariate-sampling} and \ref{sec:multivariate-sampling}. We define weights $k_i$ as evaluations of the Christoffel function associated to $\Lambda$, and are defined as in \eqref{eq:christoffel-function}.
  \item MC -- This is the ``\naive{}" approach where we sample $Z_m$ iid from the orthogonality density $w$, define the weights $k(z) \equiv 1$, and subsequently solve \eqref{eq:pre-condition-bpdn}.
  \item Asymptotic -- This is a sampling procedure designed to approximate the asymptotic envelope of the PCE basis. Essentially, this method prescribes Chebyshev sampling on tensor-product bounded domains and uniform sampling inside a hypersphere of certain radius when $Z$ is a Gaussian random variable. We detail these asymptotic cases below in Sections \ref{sec:chebyshev-sampling} and \ref{sec:hermite-sampling}; these methods were proposed in \cite{Hampton_D_JCP_2015} building on ideas from \cite{Rauhut_W_JAT_2012,Yan_GX_IJUQ_2012}.
\end{itemize}

The following sections describe the ``asymptotic" sampling strategy mentioned above.

\subsection{Asymptotic sampling for Beta random variables: Chebyshev sampling}\label{sec:chebyshev-sampling}
In the pioneering work in~\cite{Rauhut_W_JAT_2012} it was shown that for
polynomials which are orthogonal to a certain class of weight functions $\pbwt$, 
defined on bounded domains, that random samples drawn from the Chebyshev measure can be used with preconditioned
$\ell^1$-minimization to produce Vandermonde matrices with small restricted isometry properties. The major idea is that the weighted polynomials corresponding to the preconditioned have envelopes that are constant, and are thus bounded. Using results in \cite{Rahut_TFNMSR_2010}, this boundedness can be used to show RIP properties.
Since Jacobi polynomials (orthogonal with respect to the Beta distribution on $[-1,1]$ with weight function 
$\pbwt(\brv)=(1-\rv)^{\alpha}(1+\rv)^{\beta}$ for $\alpha,\beta\geq -\frac{1}{2}$) have an envelope that behaves in an analogous fashion, then the Chebyshev sampling and corresponding weighting likewise produces a sampling matrix with good RIP properties.

Let $V=(v_1,\ldots,v_d)$ be a vector of $d$ independent uniform random
variables on $[0,1]$ then the Chebyshev sampling method generates samples according to
$$Z=\cos(\pi V)$$
These samples are then used with the preconditioning weights
$$k(\brv) = \prod_{i=1}^d(1-\rv_i^2)^{1/2}\pbwt(\brv)$$
to solve the preconditioned $\ell^1$-minimization problem~\eqref{eq:pre-condition-bpdn}. This is the ``asymptotic" sampling strategy prescribed in \cite{Hampton_D_JCP_2015}.

\subsection{Asymptotic sampling for Gaussian random variables}\label{sec:hermite-sampling}
The Chebyshev sampling method is not applicable to unbounded random variables. 
In~\cite{Hampton_D_JCP_2015} an asymptotic sampling scheme was proposed for sparse 
Hermite polynomial approximation of functions parameterized by Gaussian random variables.
Asymptotic sampling draws random samples from an envelope that behaves like the
asymptotic (in order) envelope for the Hermite polynomials as the polynomial degree $n$ goes to infinity. 

Let $Y=(y_1,\ldots,y_d)$ be a vector of $d$ independent normally distributed
random variables and let $u$ be a independent uniformly distributed random variable on $[0,1]$ then we generate samples according to
\begin{align*}
\brv=\frac{Y}{\lVert Y\rVert_2}ru^{1/d}
\end{align*}
where $r=\sqrt{2n+1}$ and $n$ is the order of the total degree polynomial space.  These asymptotic Gaussian samples are uniformly distributed on the $d$-dimensional ball of radius $r$ and are used with precondition weights generated from
$$k(\brv) = \exp(-\lVert \brv \rVert_2^2/2 ).$$ Note the radius $r \approx \sqrt{2 n}$ here is essentially the same as the CSA radius prescription in Section \ref{sec:csa-hermite-multivariate-sampling}. (In contrast to Gaussian asymptotic sampling, the CSA algorithm does not sample from the uniform density.)

\subsection{Manufactured sparse solutions}
In this section we investigate the performance of the CSA method when used to recover randomly 
generated $s$-sparse vectors $\coef$ from noiseless data, such that $\vand\coef=\boldf$. Specifically we 
construct a $s$-sparse vector by selecting $s$
non-zero entries uniformly at random and set the magnitude of the non-zero 
elements to be draws from the standard normal 
distribution. Samples $\{f_m=p(\brvsamp{m})\}_{m\in[M]}$ are then taken from the polynomial 
$p(\brv)=\sum_{i=1}^N\alpha_i\phi_i(\brv)$ and Basis Pursuit is used 
to recover the coefficients $\coef$, by solving 
\begin{equation*}
\coef^\star  = \argmin_{\coef}\; \|\coef\|_1\quad \text{such that}\quad \wtmat\vand\coef=\wtmat\boldf.
\end{equation*}
Note that this minimization procedure is not the same one as \eqref{eq:csa-weighted-minimization} considered in Theorem \ref{thm:csa-convergence}. Nevertheless, we observe very good performance, which we can attribute to the fact that the matrix $\bR$ is close to the identity.

To compare the performance of the CSA method we measure the probability of `successfully'
recovering $s$-sparse vectors of varying sparsity $s$, dictionary sizes $N = |\Lambda|$, and number of samples $M$. 
We use 100 random trials and deem recovery to be successful when $\norm{\coef-\coef^\star}{2}/\norm{\coef}{2}\le 0.01$.
Here and throughout the remainder of the paper we use least angle regression (LAR) with the LASSO modification~\cite{Efron_HJT_AS_2004}
to solve the basis pursuit and basis pursuit denoising problems.

Figures~\ref{fig:uniform-transition-plots}-\ref{fig:exponential-transition-plots} demonstrate the dependence of successful recovery on the sparsity $s$, the number of samples $M$, and the number of basis terms $N$ for four types of PCE expansions. 
Specifically we consider $d$-dimensional random vectors $\rv$ whose components are iid Uniform, $\text{Beta}(\alpha=2,\beta=5)$, 
standard normal, and exponential random variables. These correspond to a PCE basis of (tensor-product) Legendre, Jacobi, 
Hermite, or Laguerre polynomials. For both bounded variable types
(Legendre, Jacobi) the random variables take values $\brv\in[-1,1]^d$.
We plot the probability of recovery as a function of the number of samples normalized
by the number of basis terms, i.e. $M/N$ (x-axis) and as a function of the sparsity normalized by the number of samples, i.e. $s/M$ (y-axis). As is standard in the compressive sampling literature we restrict attention to situations when the ratios $M/N$ and $s/M$ are less than 1. Such situations are also typical of uncertainty quantification studies when the computational expense of a simulation model limits the number of samples that can be taken, often resulting in $M\ll N$.

In each of the plots there is a sharp phase transition between the successful and unsuccessful recovery, which is a well-known effect. Effective sampling and pre-conditioning strategies can be judged based upon the location of this phase transition. (For a fixed $N/M$ the transition is ideally located at large values of $s/M$, and likewise for fixed $s/M$ the transition is ideally located at large values of $N/M$.)

\begin{figure}[ht]
\begin{center}
\includegraphics[width=0.325\textwidth]{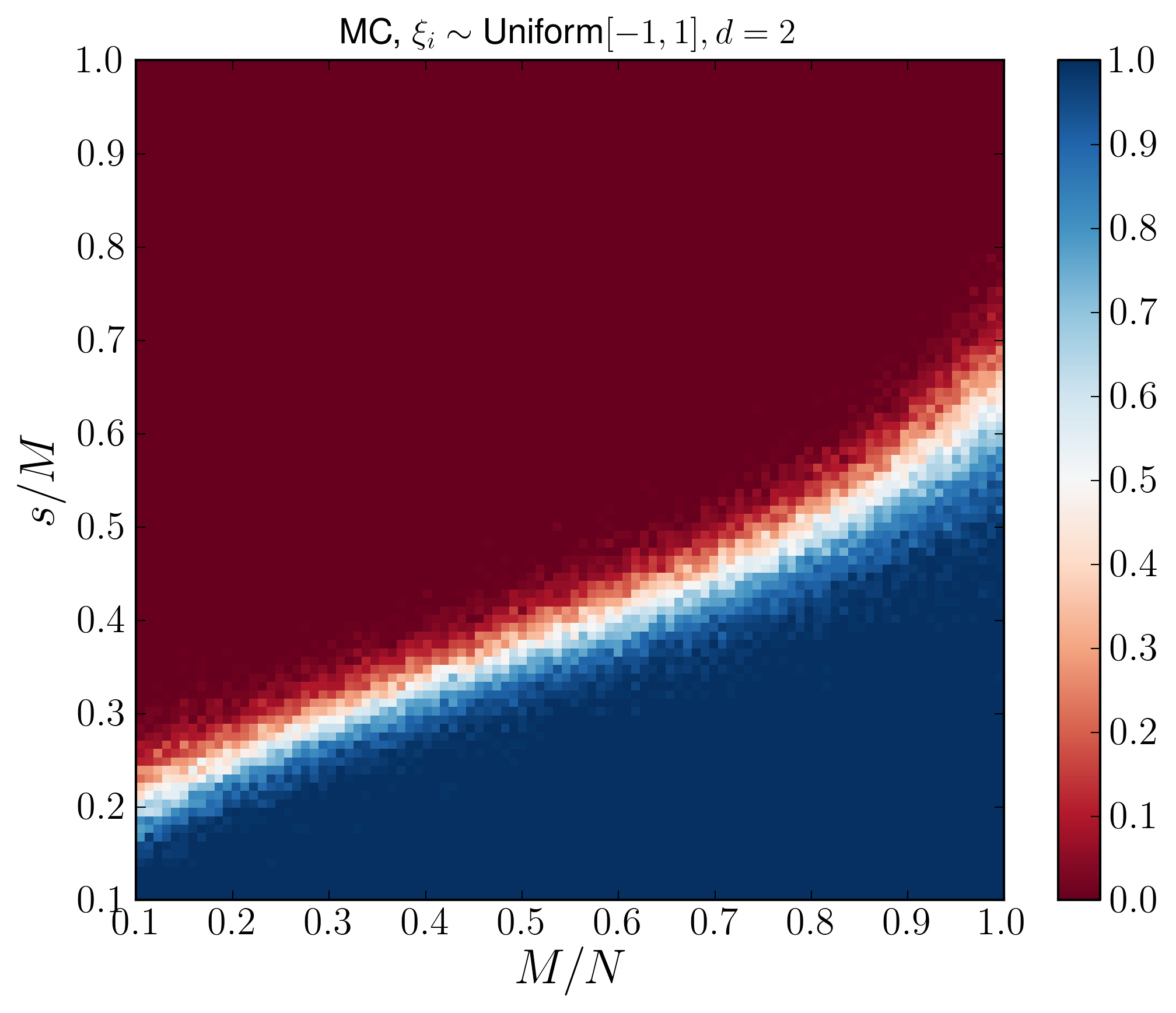}
\includegraphics[width=0.325\textwidth]{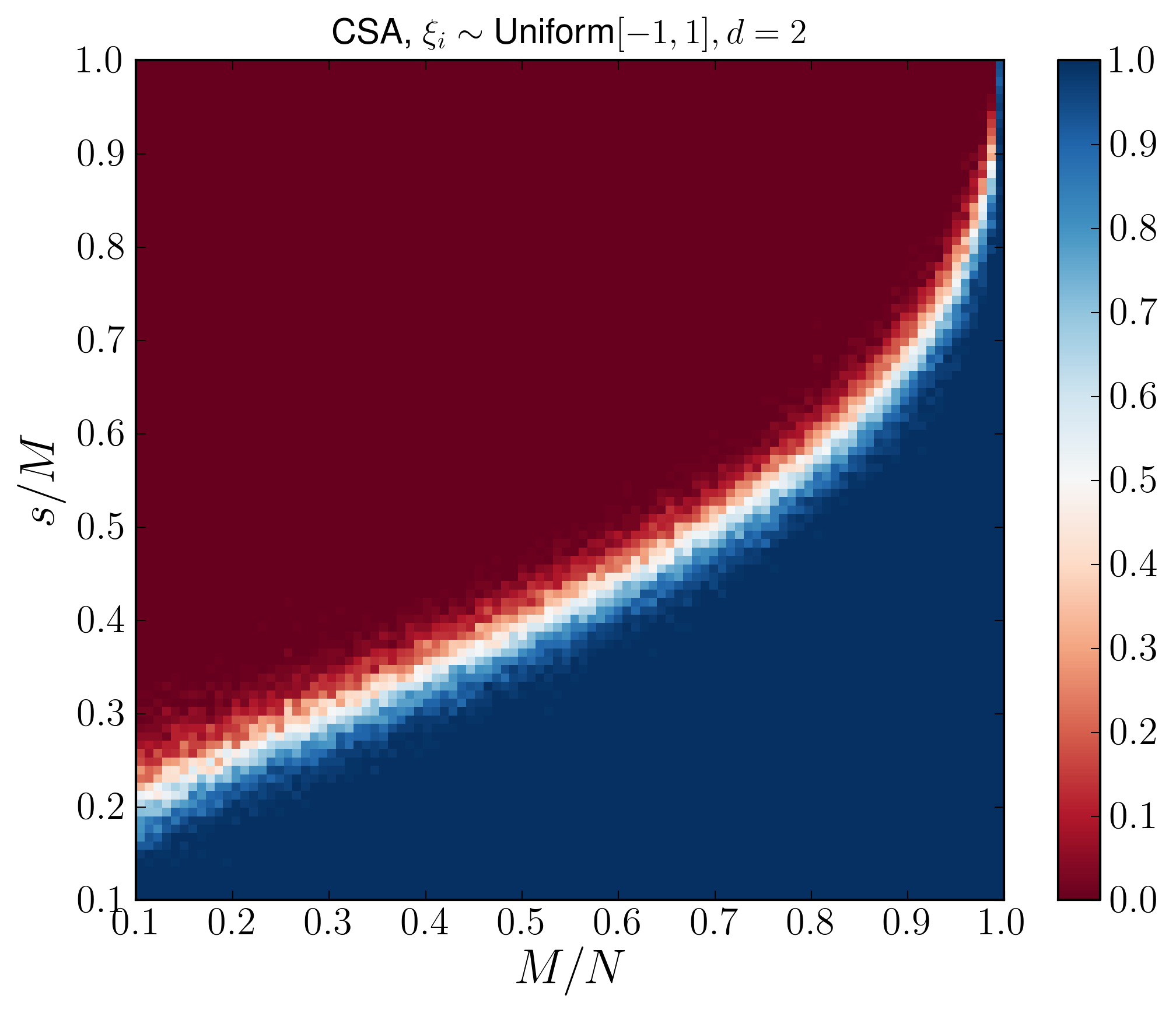}
\includegraphics[width=0.325\textwidth]{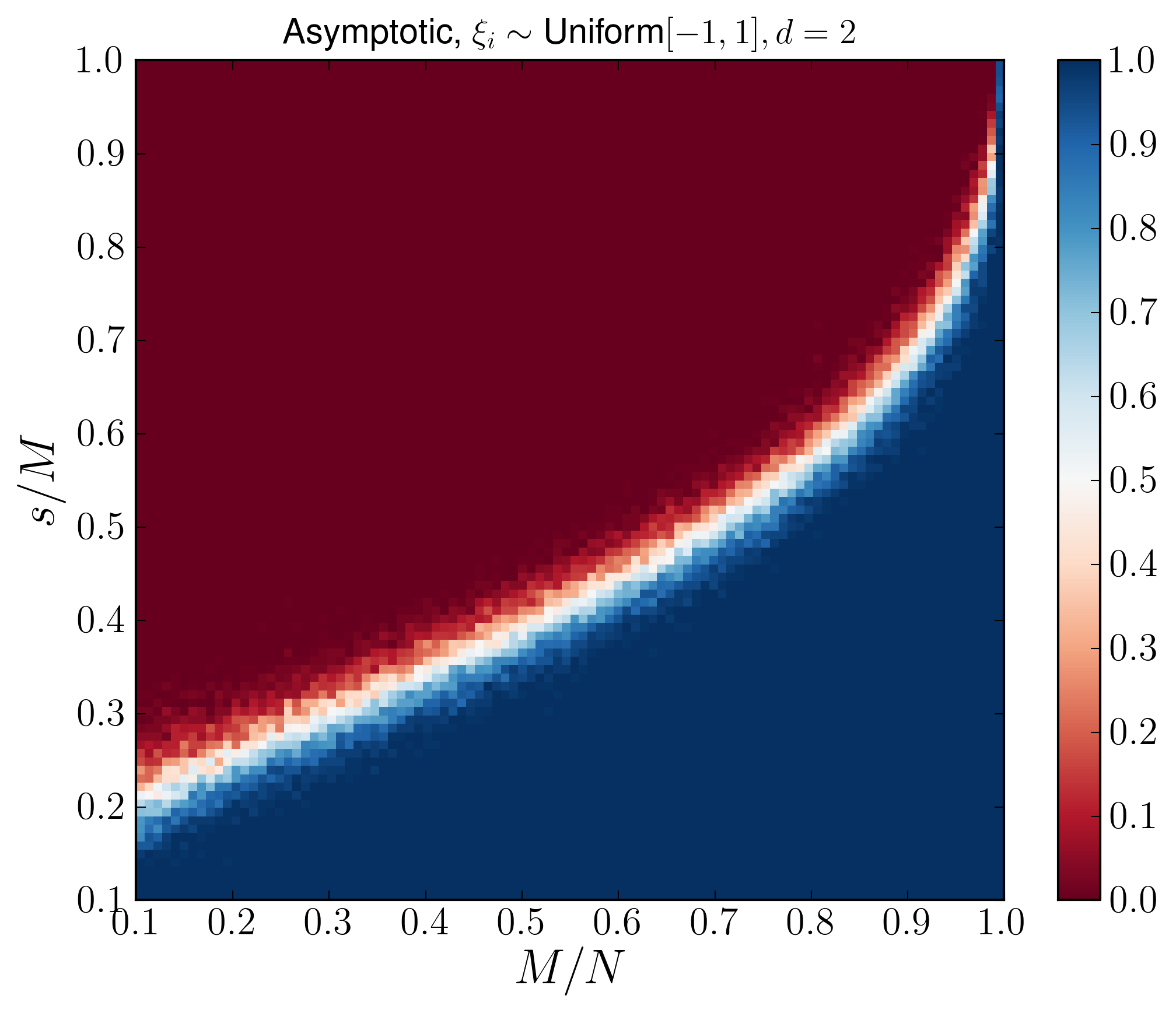}\\
\includegraphics[width=0.325\textwidth]{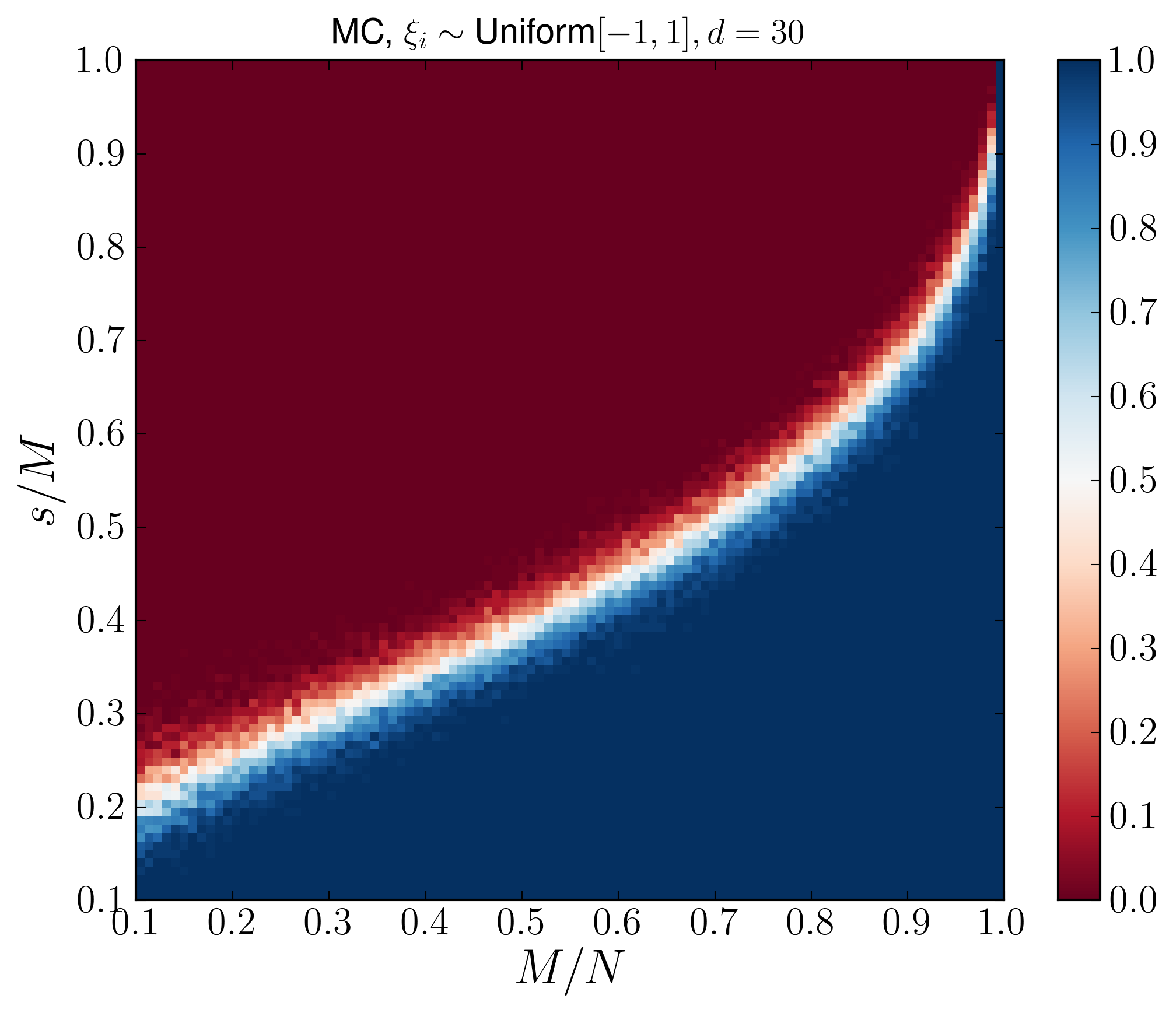}
\includegraphics[width=0.325\textwidth]{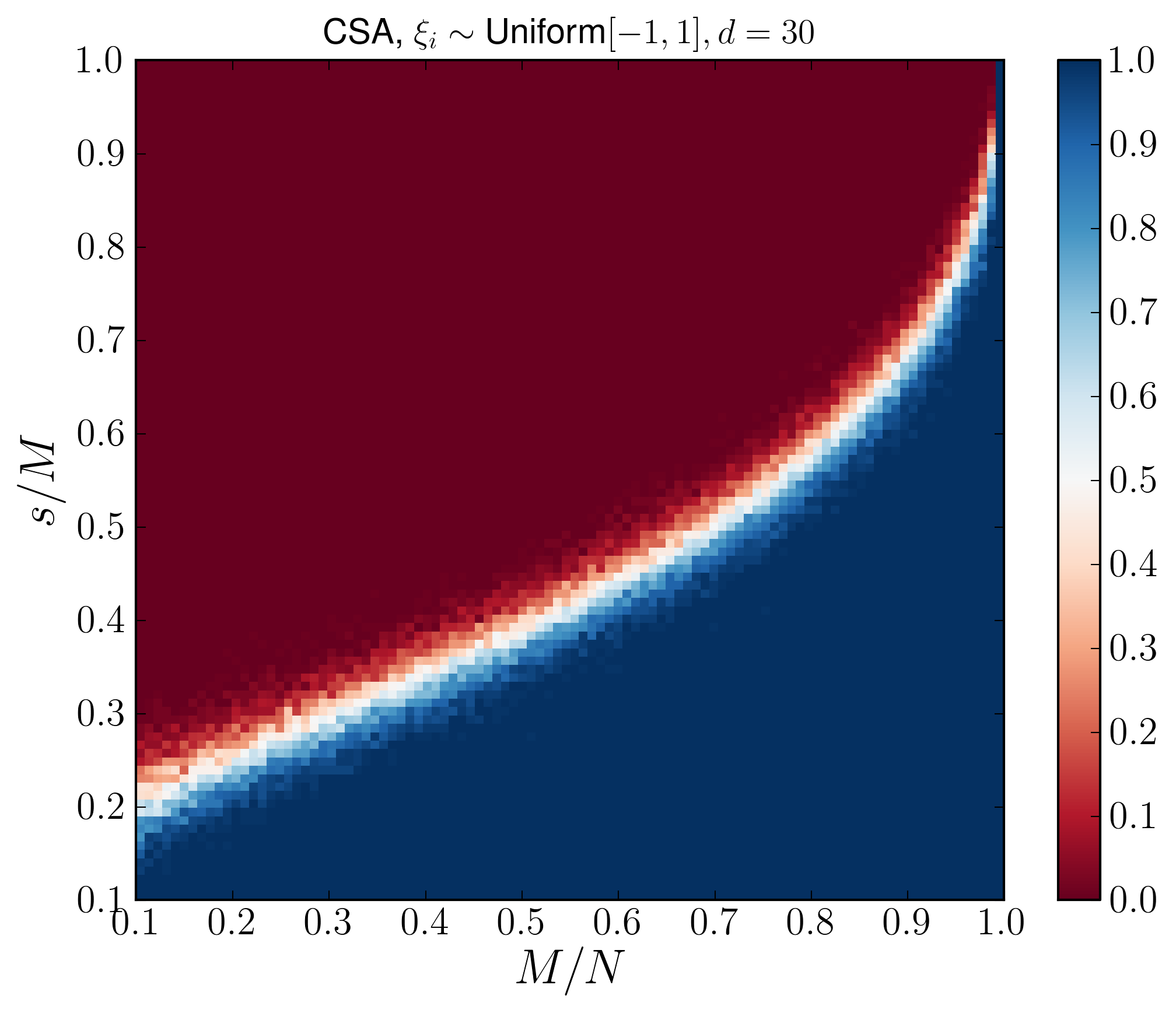}
\includegraphics[width=0.325\textwidth]{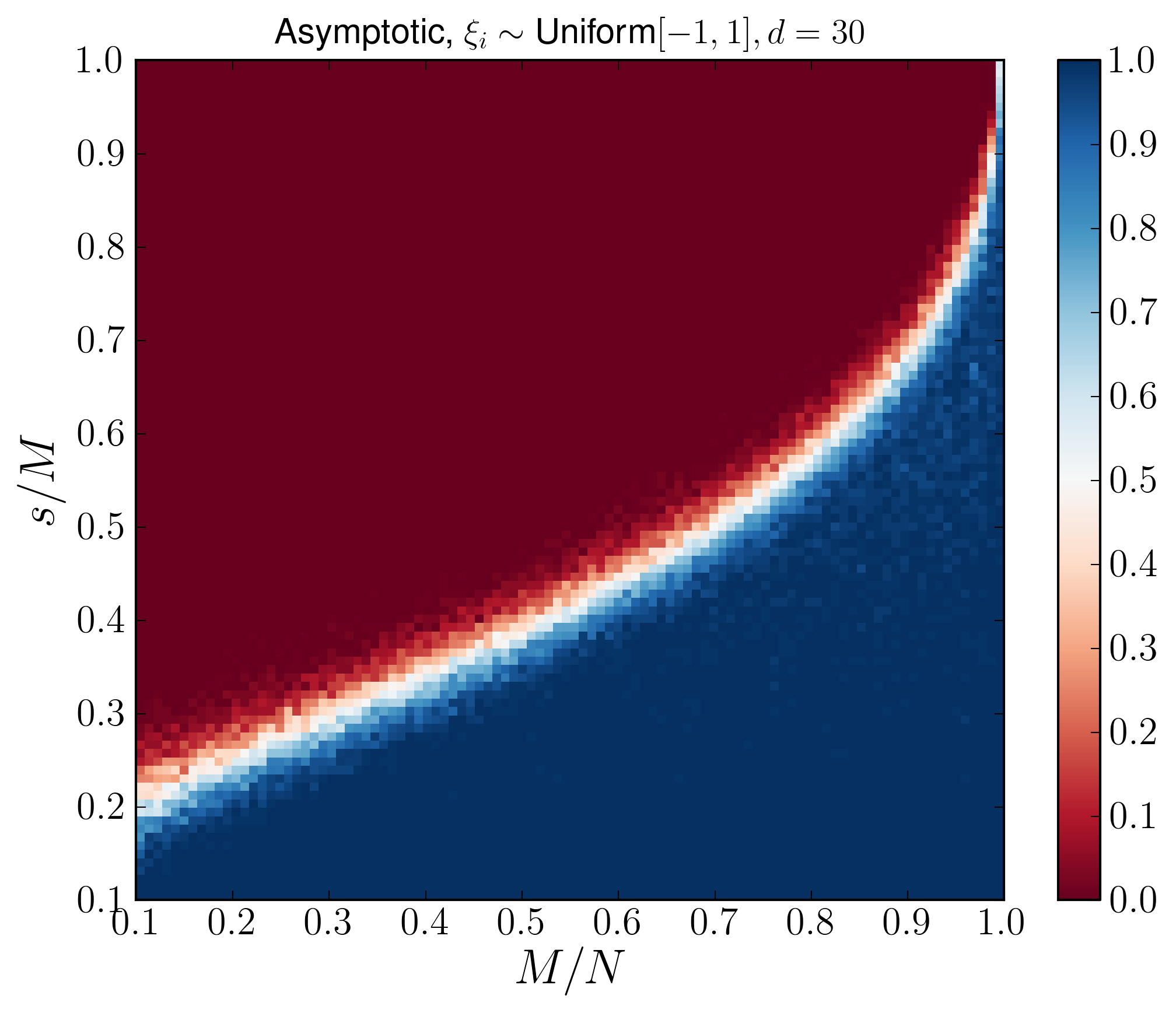}
\end{center}
\caption{Transition plots for uniform random variables for $d=2$ (top rows) and $d=30$ (bottom row).
The left column corresponds to sampling from the random variable density $\pbwt$, the middle column 
the CSA method and the right column asymptotic sampling.}
\label{fig:uniform-transition-plots}
\end{figure}
For all low-dimensional and high-degree situations considered, CSA has a high rate of recovery and performs significantly better than than probabilistic sampling according to the density $\pbwt(\rvd)$, which exhibits almost no recovery. CSA also exhibits similar rates
of recovery to standard Monte Carlo sampling (MC) when applied to high-dimensional, low-degree polynomials associated with uniform and Gaussian variables (Figures~\ref{fig:uniform-transition-plots} and \ref{fig:gaussian-transition-plots}, respectively). However recovery of CSA is slightly worse than MC when recovering high-dimensional Laguerre polynomials (Figure~\ref{fig:exponential-transition-plots}), and significantly worse than MC when recovering Jacobi polynomials (middle column of Figure~\ref{fig:beta-transition-plots}). We attribute the degradation in the performance of CSA for high-dimensions to the low-degree polynomials used in this setting. The Christoffel function induces only approximate orthogonality for finite polynomial degree. In the univariate case, as the polynomial degree is reduced the magnitude of the error induced by the the approximate orthogonality (i.e., the deviation of $\bR$ from $\bs{I}$) increases as predicted by Theorem~\ref{thm:csa-convergence}. 

The approximate orthogonality of the Christoffel function for low-degree polynomials seems to have little effect on recovery of the Hermite and Legendre polynomials. A possible explanation for the success of this procedure for Legendre polynomials is Corollary \ref{cor:legendre-csa-convergence}: The univariate Legendre polynomials actually remain an orthogonal family when weighted by the Christoffel function under the equilibrium measure \cite{bos_orthogonality_2015}. 

Similar to CSA, the bounded and Gaussian asymptotic sampling methods achieve higher-rates of recovery than probabilistic sampling when approximating low-dimensional, high-degree polynomials. However unlike CSA there is no one scheme that can be applied to Uniform, Beta, Normal and Exponential random variables. Indeed the authors are unaware of any preconditioning scheme for Exponential variables, Moreover the error in the approximation recovered by the asymptotic bounded sampling method for Beta variables increases with dimension. When $d=30$ the asymptotic bounded sampling method fails to recover any polynomials regardless of the sparsity or the number of samples used.

It is worth noting that case of Legendre polynomials sampled by Chebyshev distribution we have a complete independence of the order of approximation, which agrees with previous results in~\cite{Rauhut_W_JAT_2012}. However there are numerical results in \cite{Hampton_D_JCP_2015,Yan_GX_IJUQ_2012} showing almost no recovery when using the Chebyshev sampling method in high-dimensions.

With the help of the authors of~\cite{Hampton_D_JCP_2015} we have verified that the poor performance exhibited in the aforementioned papers is a result of numerical issues associated with the authors use of the $\ell^1$-minimization solver in SparseLab~\cite{Donoho_ST_manual}. Specifically, the authors of \cite{Hampton_D_JCP_2015} were using more lenient optimization tolerances, and when these tolerances were made tighter to match our optimization tolerance, the authors of \cite{Hampton_D_JCP_2015} obtained results consistent with Figure~\ref{fig:uniform-transition-plots}.

\begin{figure}[ht]
\begin{center}
\includegraphics[width=0.325\textwidth]{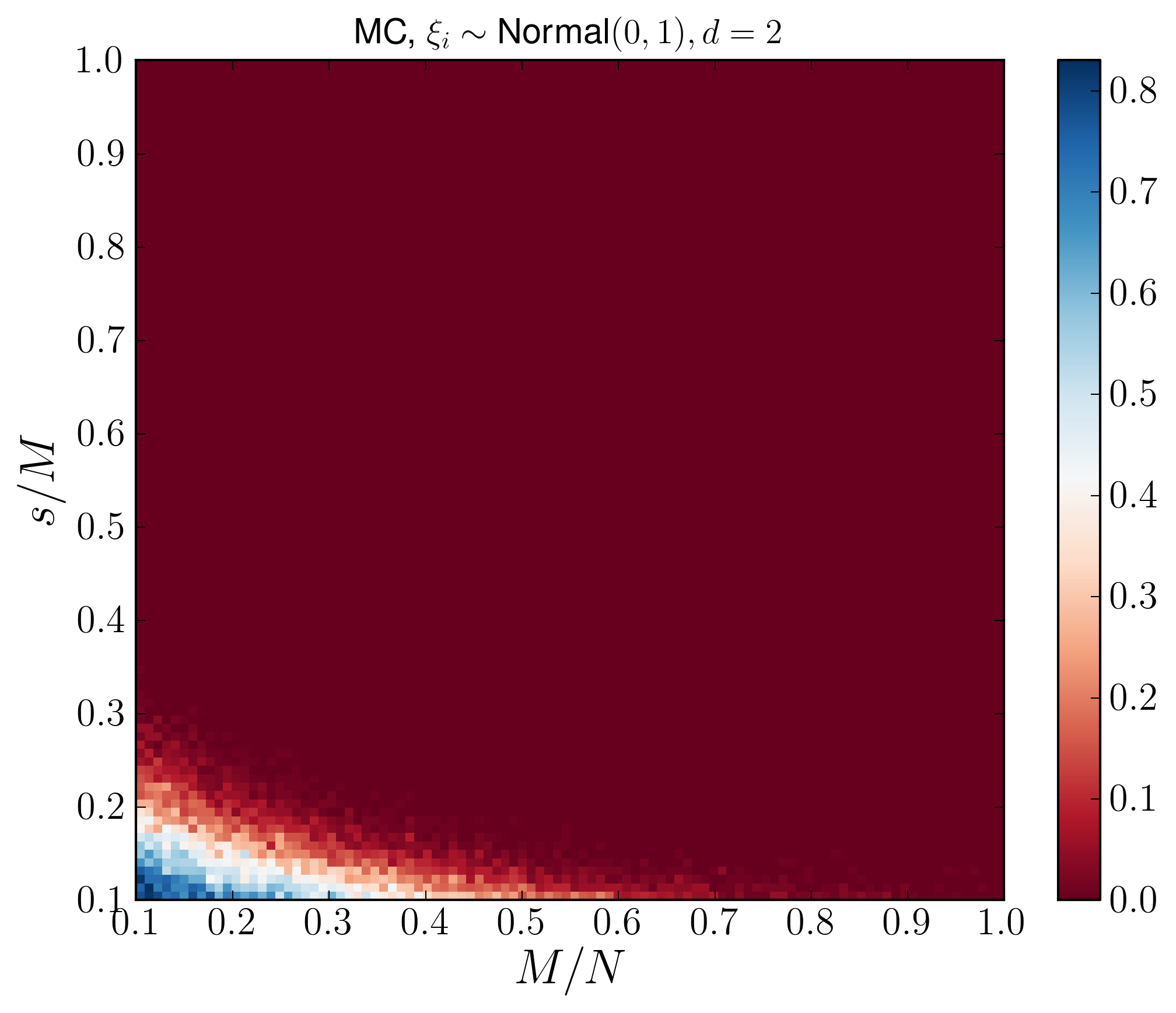}
\includegraphics[width=0.325\textwidth]{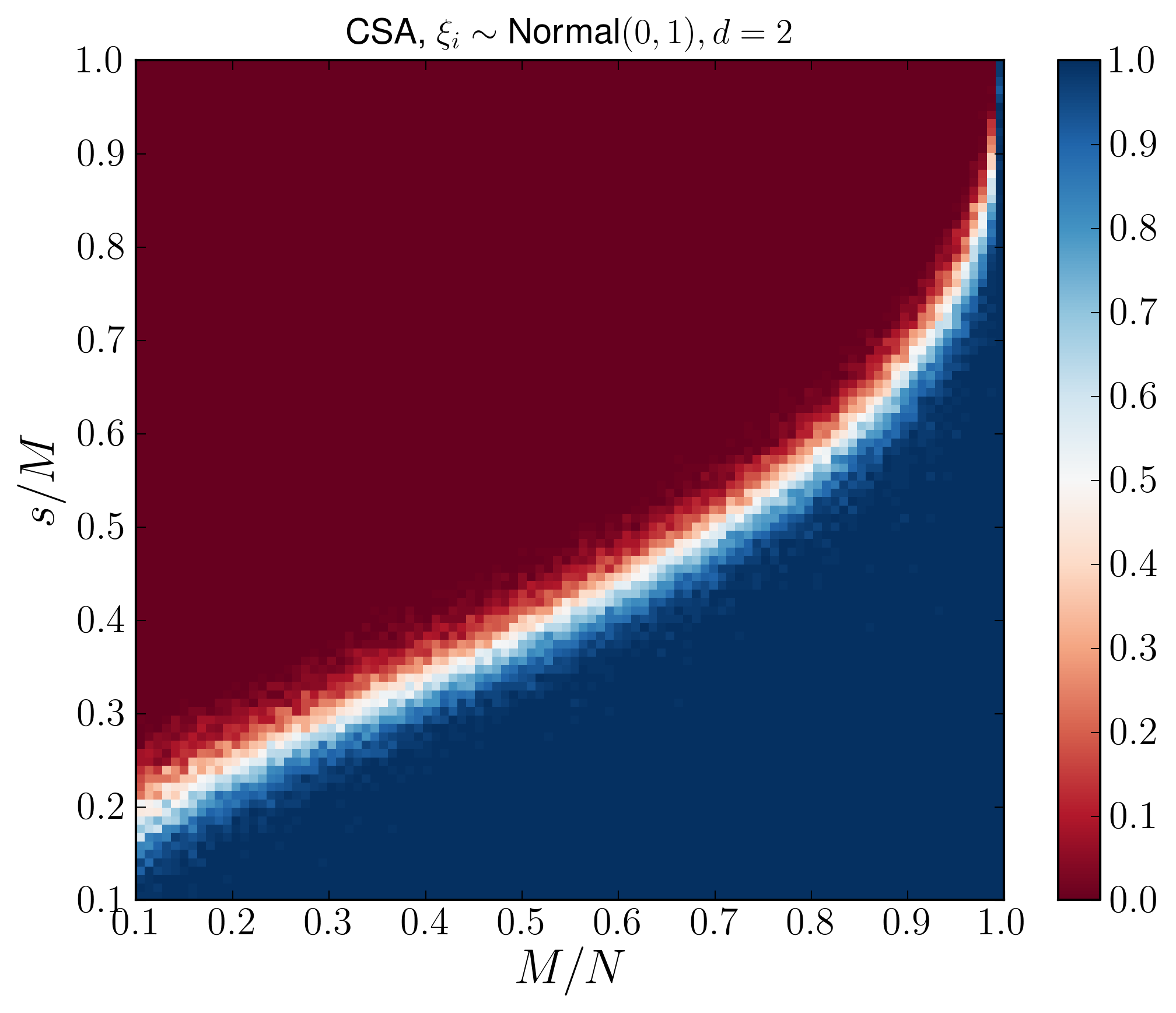}
\includegraphics[width=0.325\textwidth]{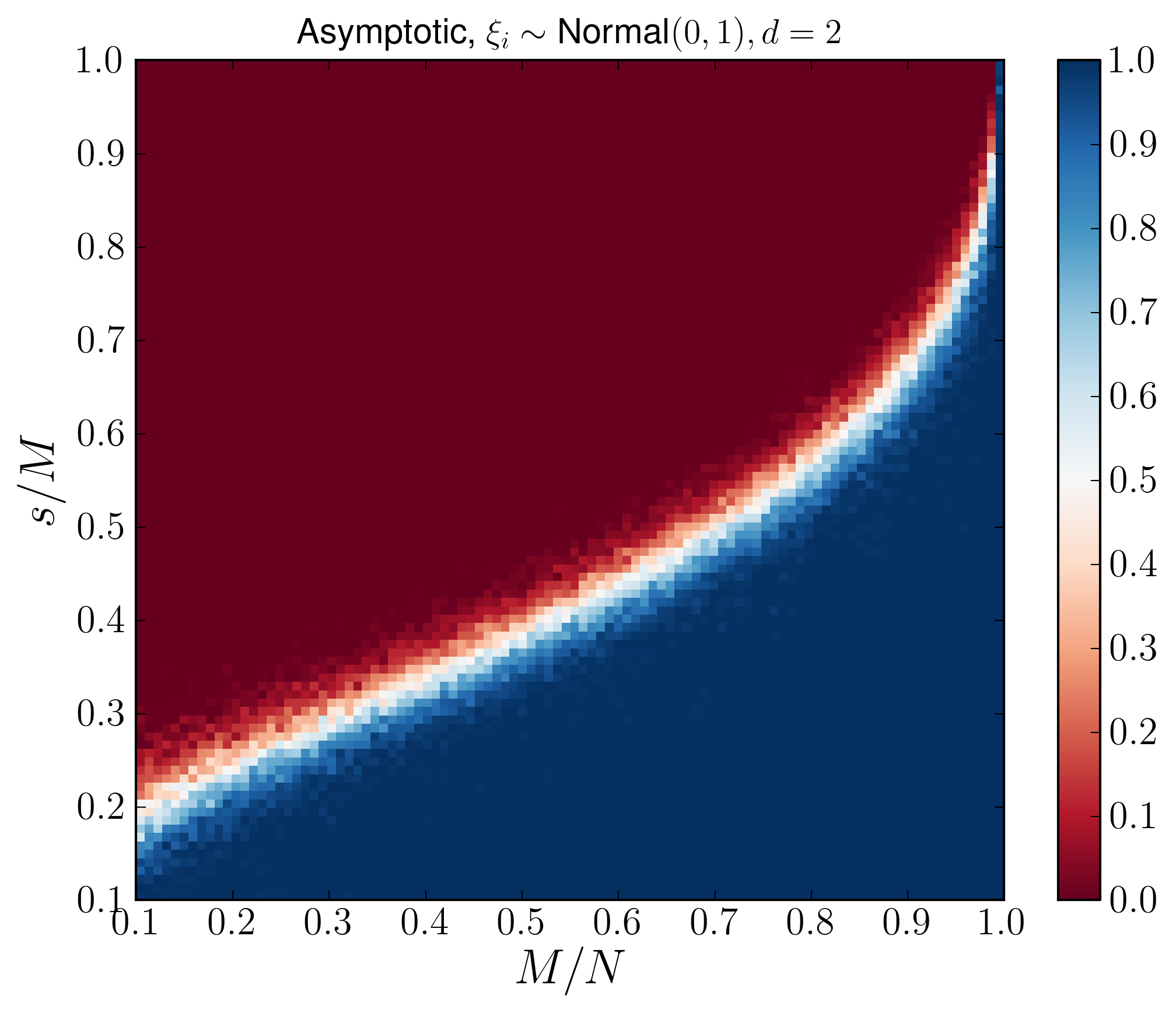}\\
\includegraphics[width=0.325\textwidth]{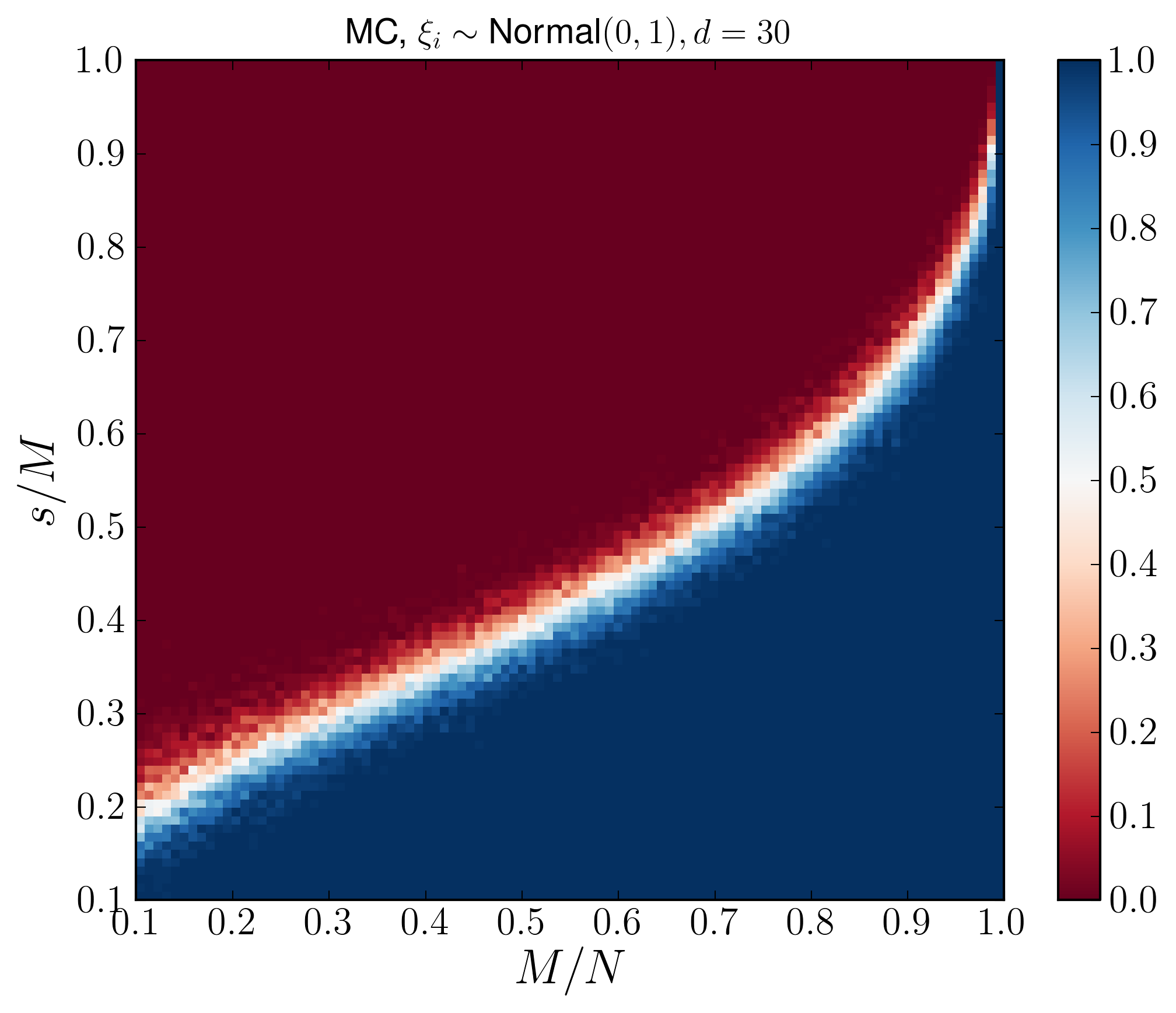}
\includegraphics[width=0.325\textwidth]{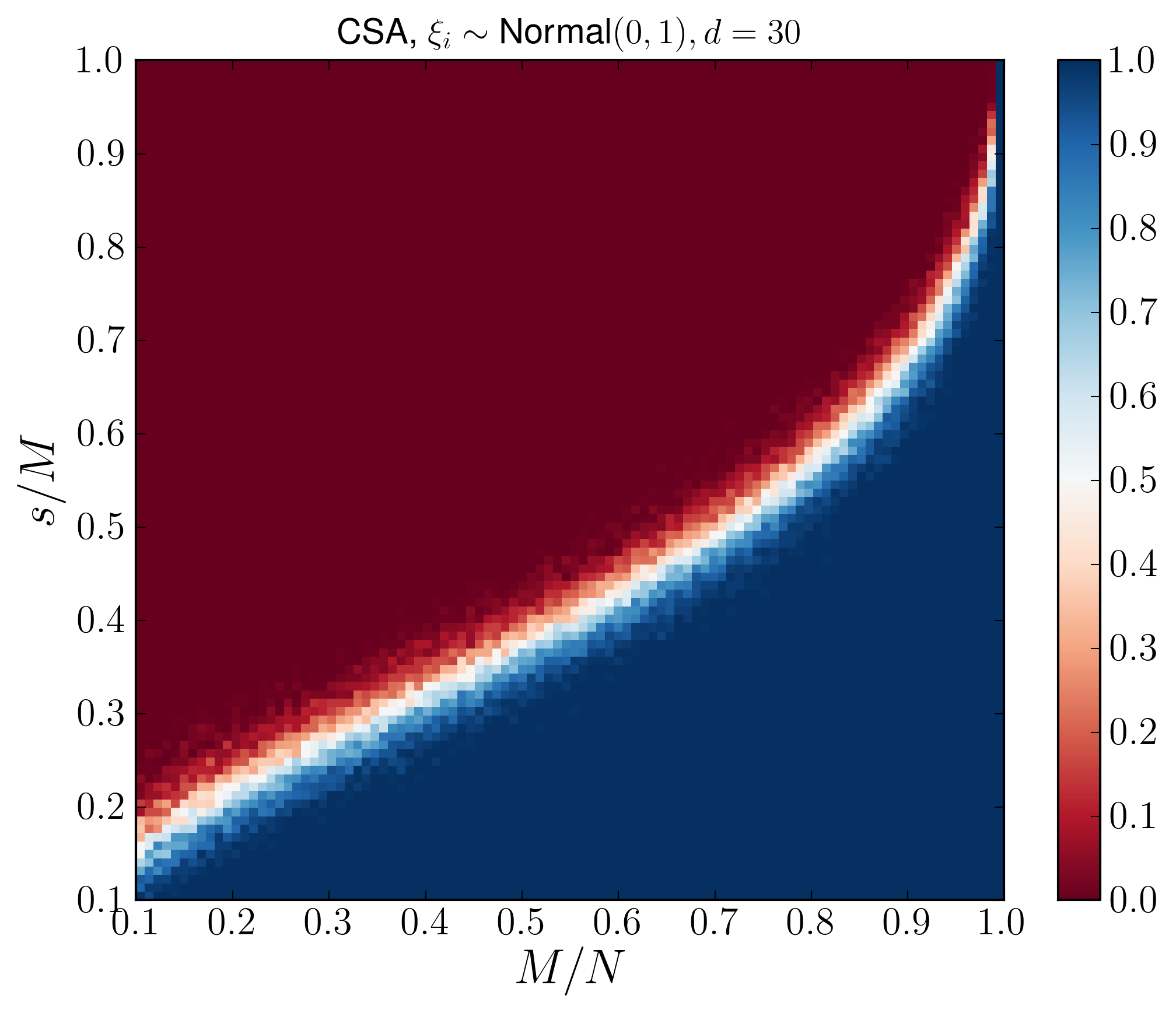}
\includegraphics[width=0.325\textwidth]{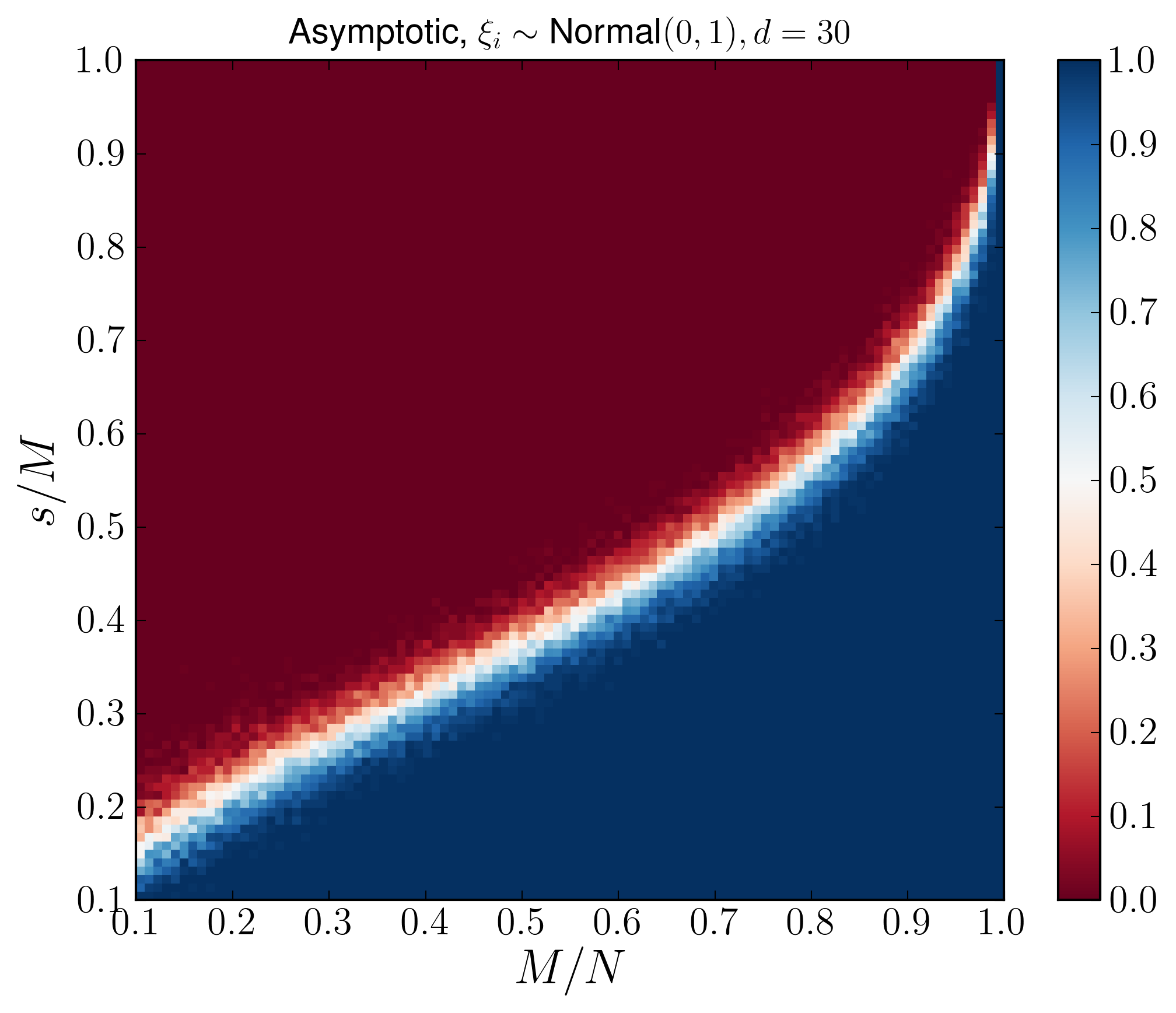}
\end{center}
\caption{Transition plots for normally distributed random variables for $d=2$ (top rows) and $d=30$ (bottom row). The left column corresponds to sampling from the random variable density $\pbwt$, the middle column the CSA method and the right column asymptotic sampling.}
\label{fig:gaussian-transition-plots}
\end{figure}  
 
\begin{figure}[ht]
\begin{center}
\includegraphics[width=0.325\textwidth]{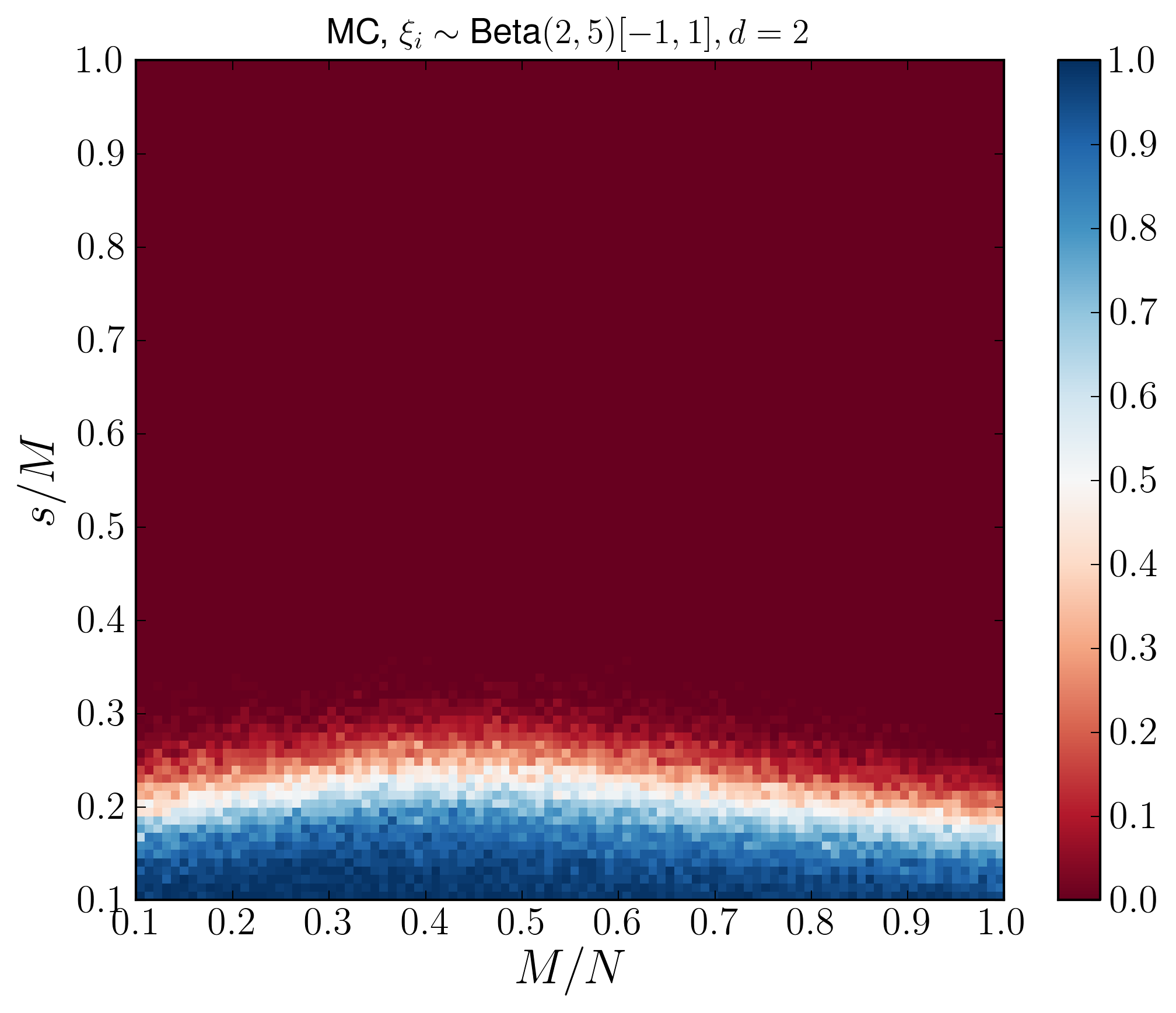}
\includegraphics[width=0.325\textwidth]{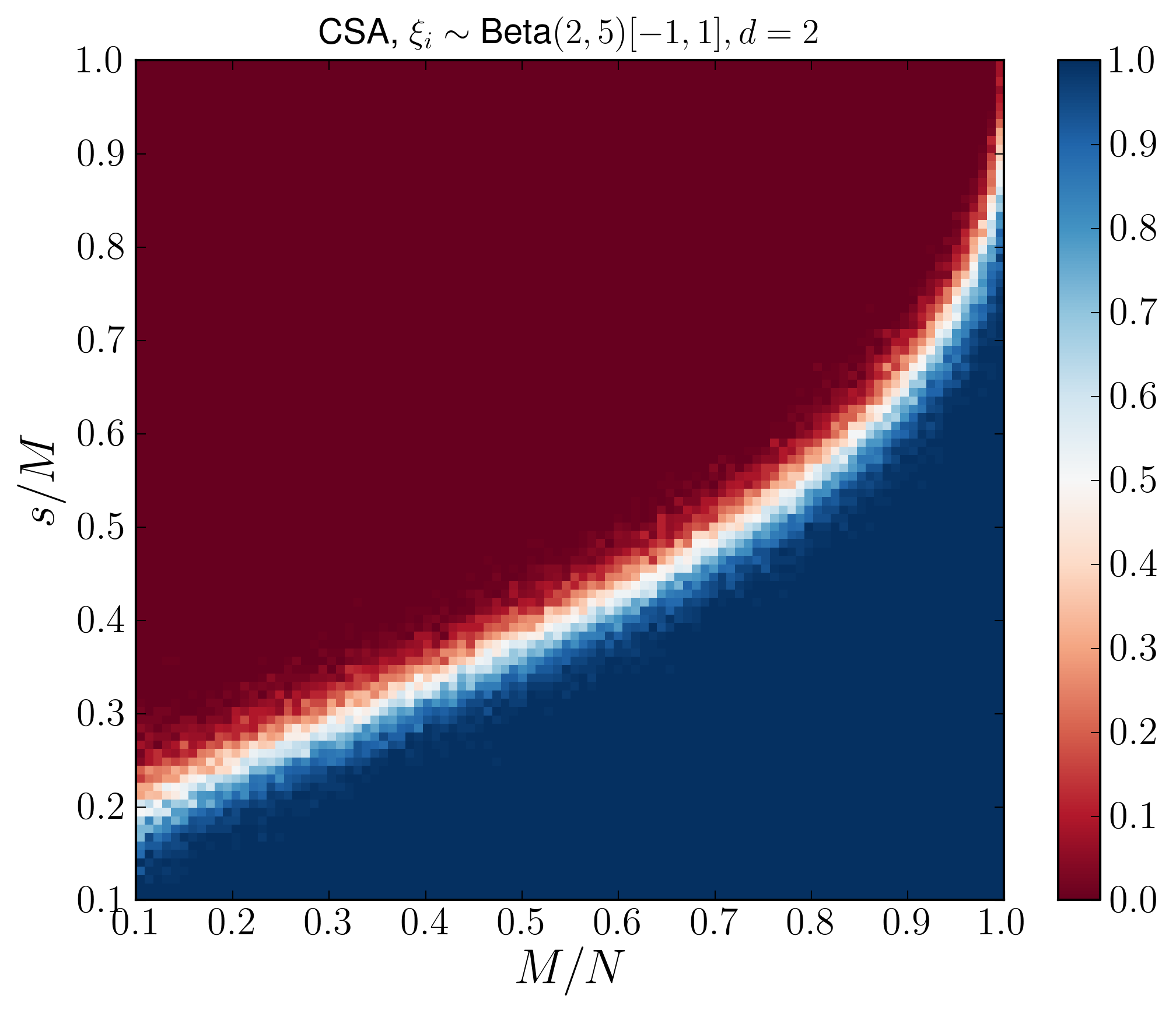}
\includegraphics[width=0.325\textwidth]{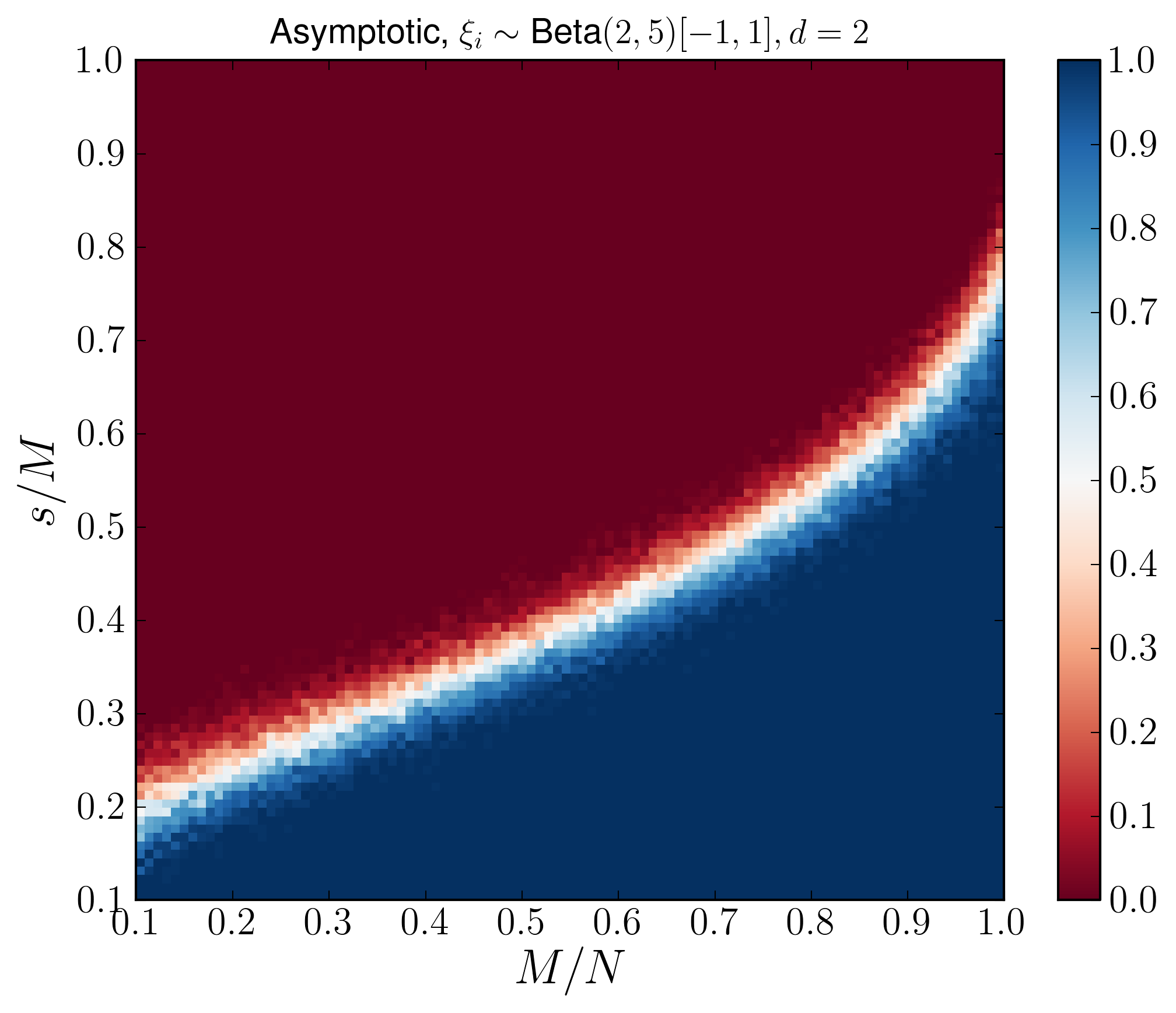}\\
\end{center}
\begin{flushleft}
\includegraphics[width=0.325\textwidth]{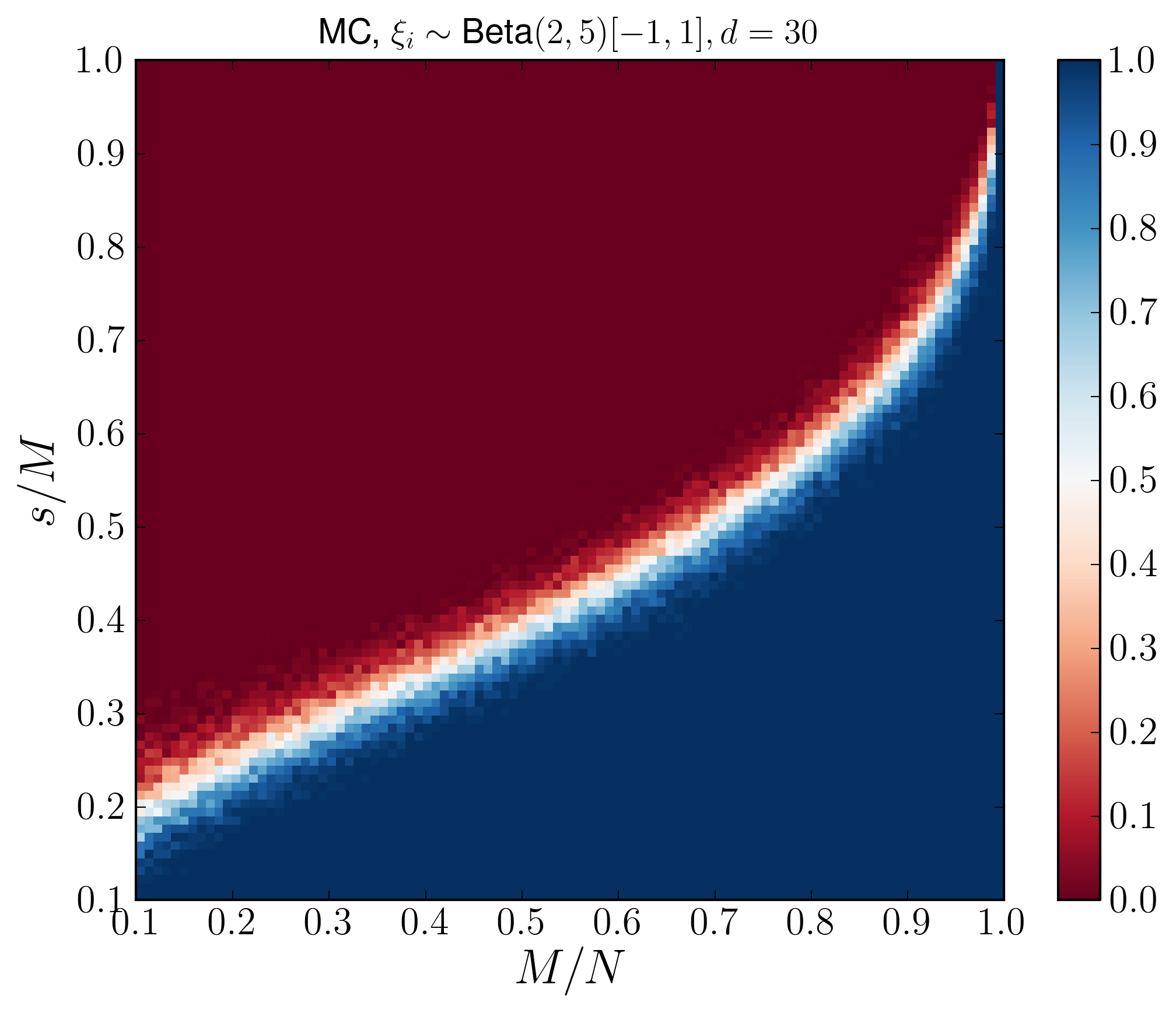}
\includegraphics[width=0.325\textwidth]{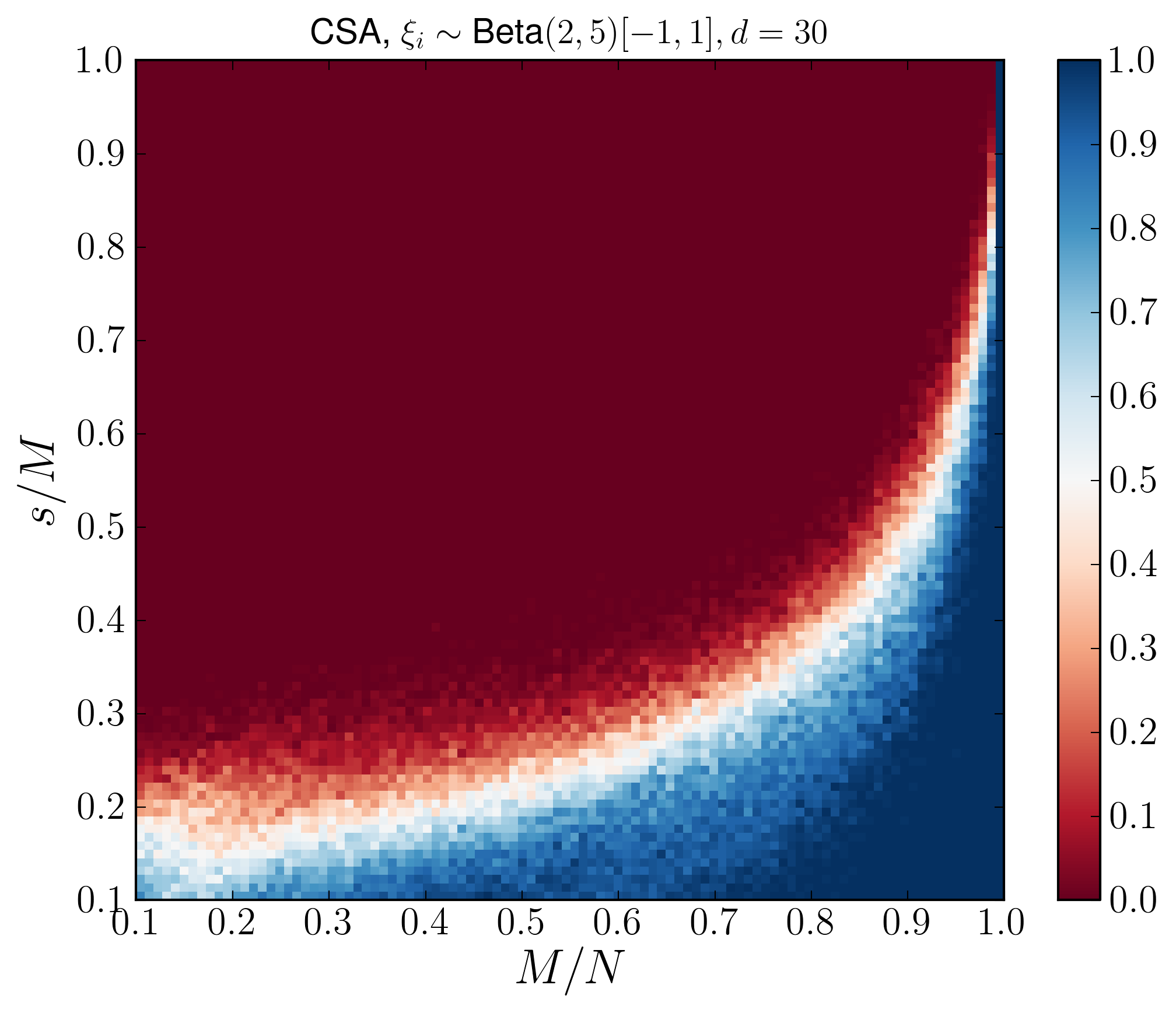}
\end{flushleft}
\caption{Transition plots for Beta(2,5) distributed random variables for $d=2$ (top rows) and $d=30$ (bottom row).  The left column corresponds to sampling from the random variable density $\pbwt$, the middle column the CSA method and the right column asymptotic sampling. No plot is shown for asymptotic sampling for $d=30$ because no recovery is obtained.}
\label{fig:beta-transition-plots}
\end{figure}  

\begin{figure}[ht]
\begin{center}
\includegraphics[width=0.325\textwidth]{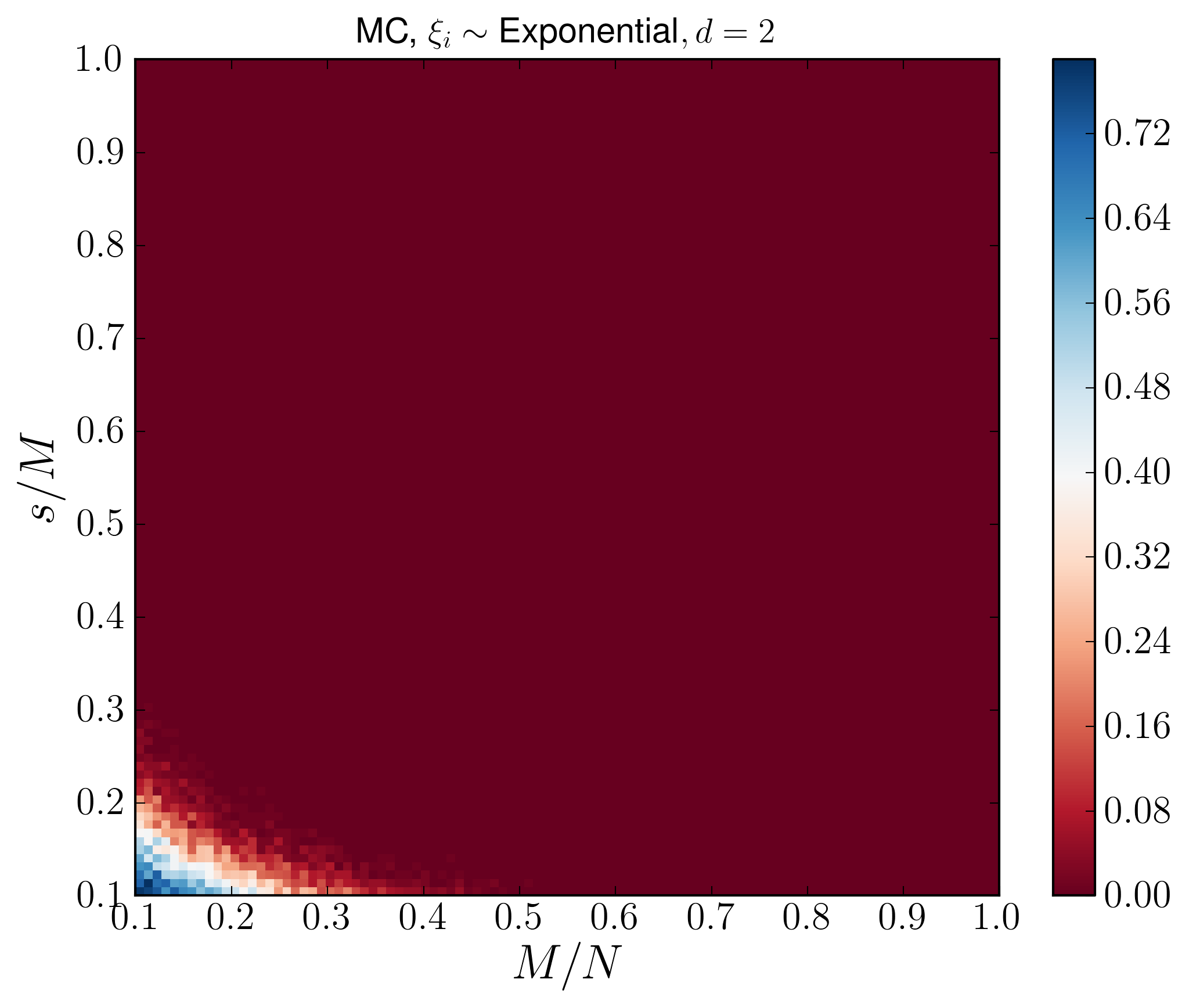}
\includegraphics[width=0.325\textwidth]{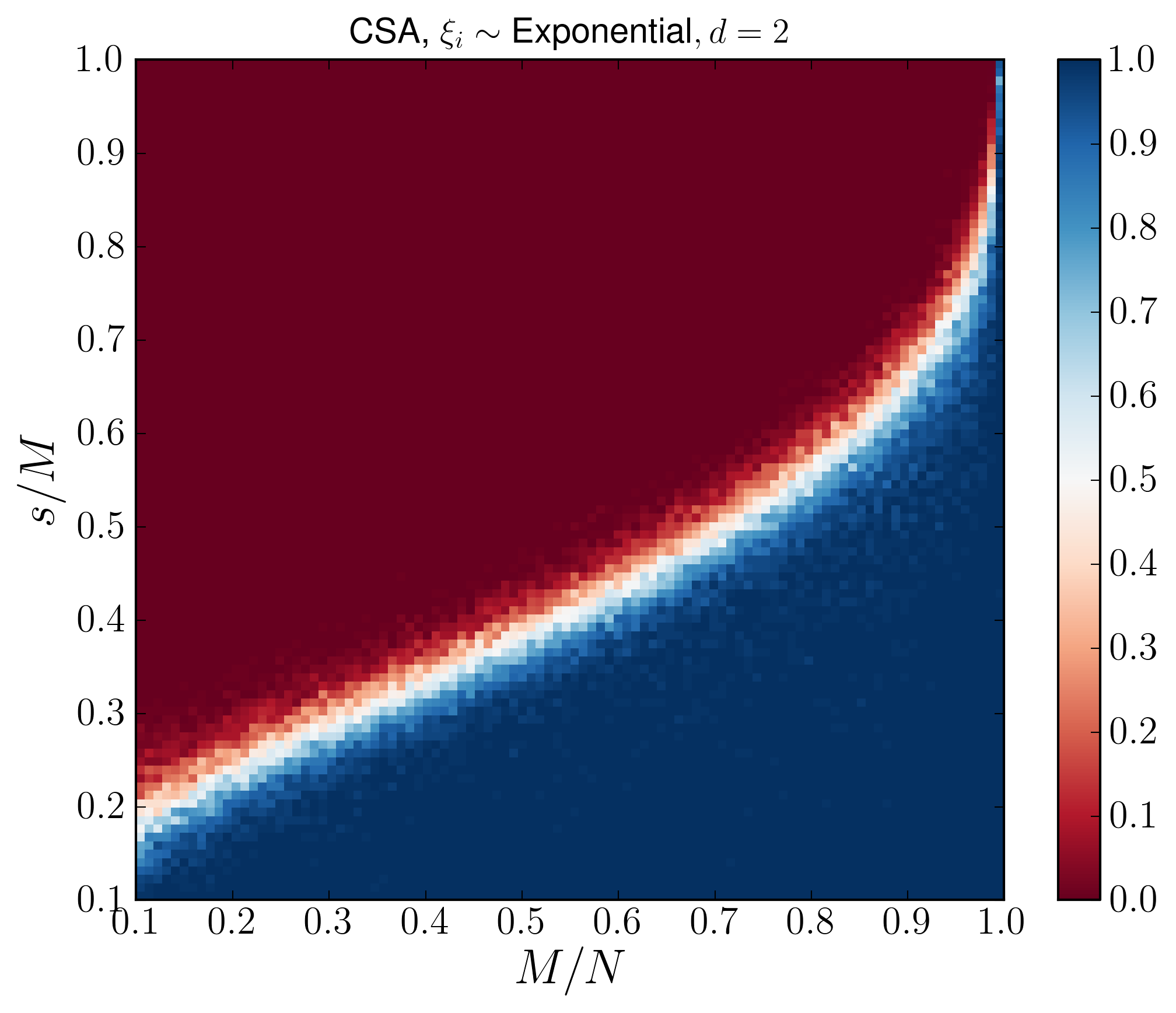}\\
\includegraphics[width=0.325\textwidth]{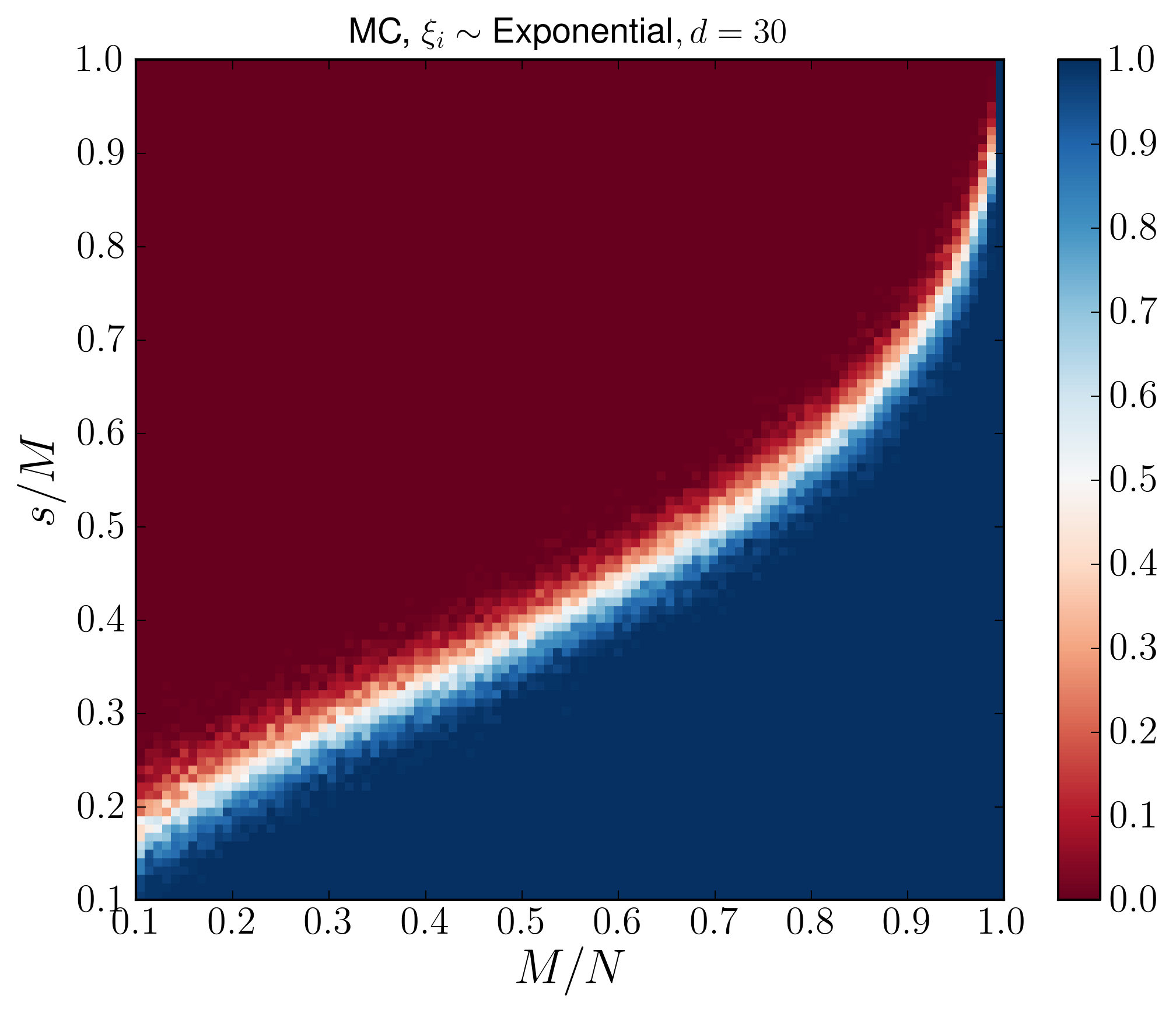}
\includegraphics[width=0.325\textwidth]{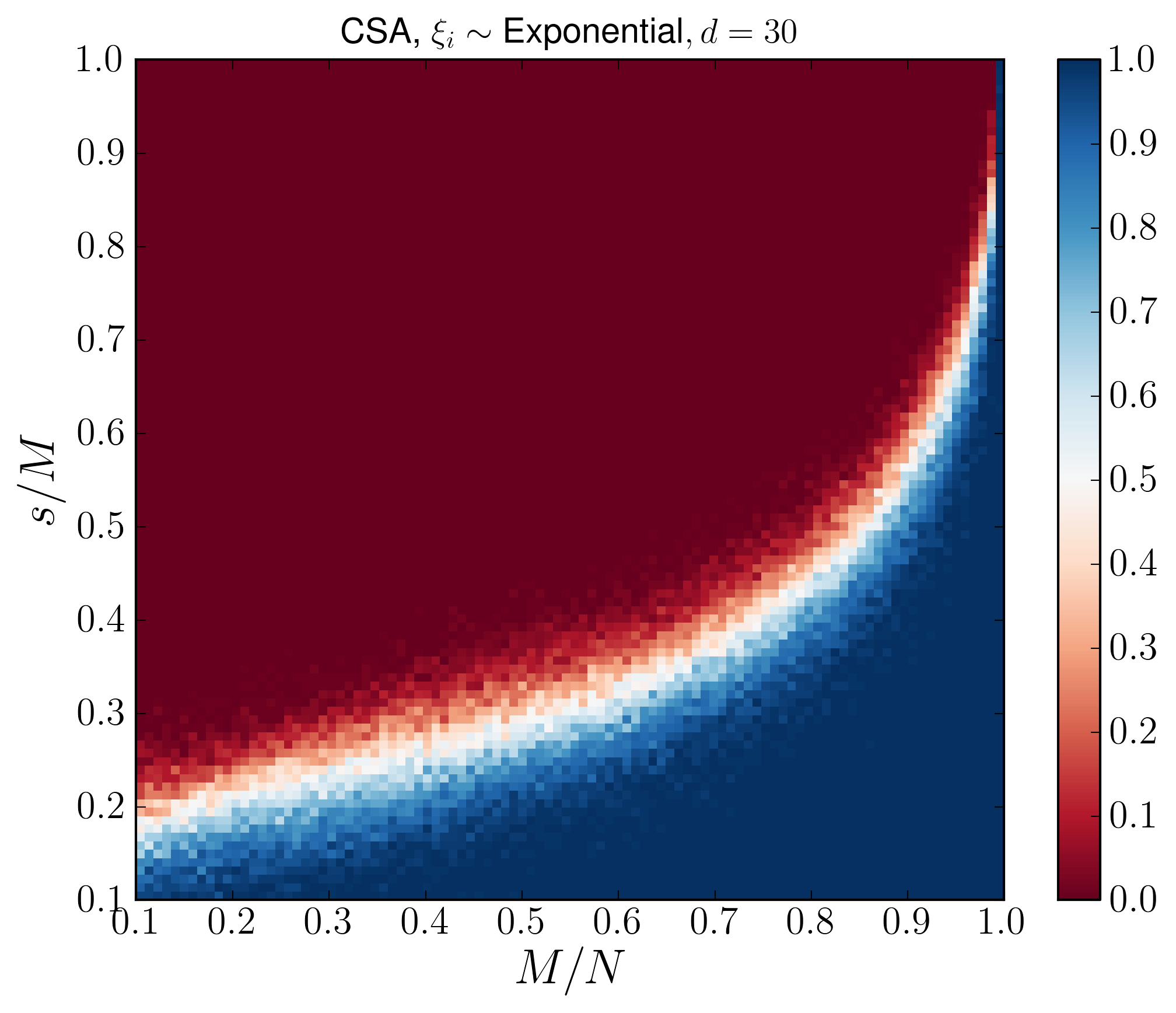}
\end{center}
\caption{Transition plots for exponentially distributed random variables for $d=2$ (top rows) and $d=30$ (bottom row).  The left column corresponds to sampling from the random variable density $\pbwt$, the middle column the CSA method and the right column asymptotic sampling.}
\label{fig:exponential-transition-plots}
\end{figure}  

%

\subsection{Elliptic PDE with random inputs}
In this section, we consider the polynomial approximation of a functional of the solution of the heterogeneous diffusion equation subject to uncertainty in the diffusivity coefficient. This problem has been used as a benchmark in other works~\cite{Hampton_D_JCP_2015,Yang_K_JCP_2013}.

Attention is restricted to one-dimensional physical space to avoid unnecessary complexity, but the procedure described here can easily be extended to higher physical dimensions.
Consider the following problem with $d \ge 1$ random parameters:
\begin{equation}\label{eq:hetrogeneous-diffusion}
-\frac{d}{dx}\left[a(x,\brv)\frac{du}{dx}(x,\brv)\right] = 1,\quad 
(x,\brv)\in(0,1)\times I_\brv
\end{equation}
subject to the physical boundary conditions
\begin{equation*}
u(0,\brv)=0,\quad u(1,\brv)=0.
\end{equation*}
Furthermore, assume that the random log-diffusivity satisfies
\begin{equation*}\label{eq:diffusivityZ}
\log(a(x,\brv))=\bar{a}+\sigma_a\sum_{k=1}^d\sqrt{\gamma_k}\varphi_k(x)\rv_k,
\end{equation*} 
where $\{\gamma_k\}_{k=1}^d$ and $\{\varphi_k(x)\}_{k=1}^d$ are, respectively, 
the eigenvalues and eigenfunctions of the squared exponential covariance kernel 
\[
 C_a(x_1,x_2) = \exp\left[-\frac{(x_1-x_2)^2}{l_c^2}\right].
\]
The variability of the diffusivity field~\eqref{eq:diffusivityZ} is controlled by $\sigma_a$ and the correlation length $l_c$ which determines the decay of the eigenvalues $\gamma_k$. 

In the following we use the CSA method to approximate the a quantity of interest $q$ defined by $q(\brv)=u(1/2,\brv)$ for varying dimension $d$ and random variables $\brv$.  We set $\bar{a}=0.1$, $l_c=1/10$ and vary $\sigma$ with dimension, specifically when $d=2$ we set $\sigma_a=1$ and when $d=20$ we set $\sigma_a=0.017$. The spatial solver for the model~\eqref{eq:hetrogeneous-diffusion} uses spectral Chebyshev collocation with a high enough spatial resolution to neglect discretization errors in our analysis. 

To measure the performance of an approximation, we will use the $\pbwt$-weighted $\ell_2$ error.  Specifically given a set of $Q=10,000$ random samples $\{\brv^{(j)}\}_{j=1}^Q\in\dom$ drawn from the density $\pbwt$ we evaluate the
true function $f(\brv^{(j)})$ and the PCE approximation $\hat{f}(\brv^{(j)})$ and compute
\[
 \varepsilon_{\ell_2(\pbwt)} = \left(\frac{1}{Q}\sum_{j=1}^Q \lvert\hat{f}(\brv^{(j)})-f(\brv^{(j)})\rvert^2\right)^{1/2}
\]

Figures \ref{fig:uniform-dimension-comparison}--\ref{fig:beta-dimension-comparison} compare CSA with standard Monte Carlo probablistic sampling (labeled MC in the plots) when approximating $q$ in low and high-dimensions. In these and all subsequent plots, we show the median and 1st and 3rd quantiles of the $\varepsilon_{\ell_2(w)}$ of 100 random trials. For all the low-dimension ($d=2$) examples provided CSA significantly out-performs probabilistic sampling. A much smaller number of samples are needed to achieve a PCE with a given $\ell_2$ accuracy. In high-dimensions ($d=20$) CSA is competitive with probabilistic sampling when using Legendre polynomials, however MC is more efficient for the other variable types shown.  Again this is due to the fact in our high-dimensional simulations we can only use a low-degree polynomials, which inhibits the effectiveness of the CSA method. These findings are consistent with the behavior observed in the transition plots, Figures~\ref{fig:uniform-transition-plots}-\ref{fig:exponential-transition-plots}.

\begin{figure}[ht]
\begin{center}
\includegraphics[width=\textwidth]{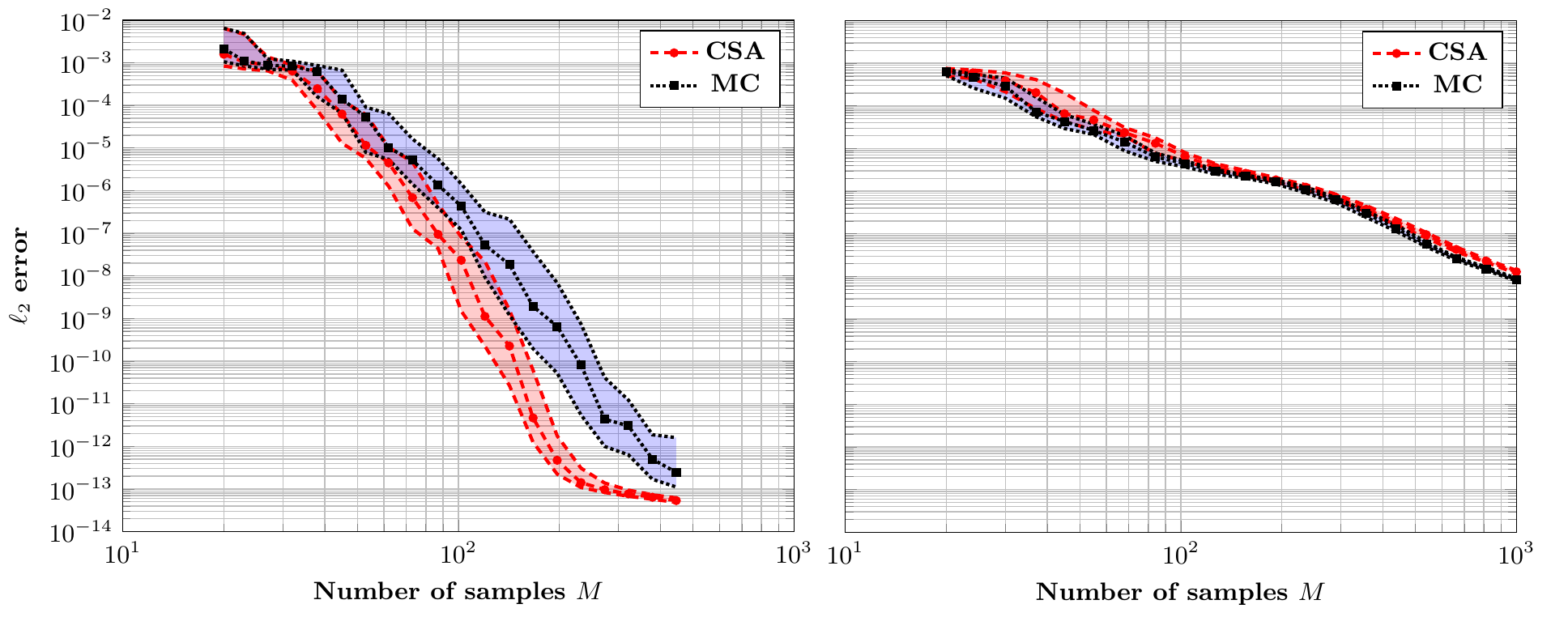}
\end{center}
\caption{The effect of dimension on the convergence of the CSA Legendre-PCE approximation of the diffusion equation~\eqref{eq:hetrogeneous-diffusion}. (Left) 30th degree polynomial in 2 dimensions.  (Right) 4th degree polynomial in 20 dimensions }
\label{fig:uniform-dimension-comparison}
\end{figure}  

\begin{figure}[ht]
\begin{center}
\includegraphics[width=\textwidth]{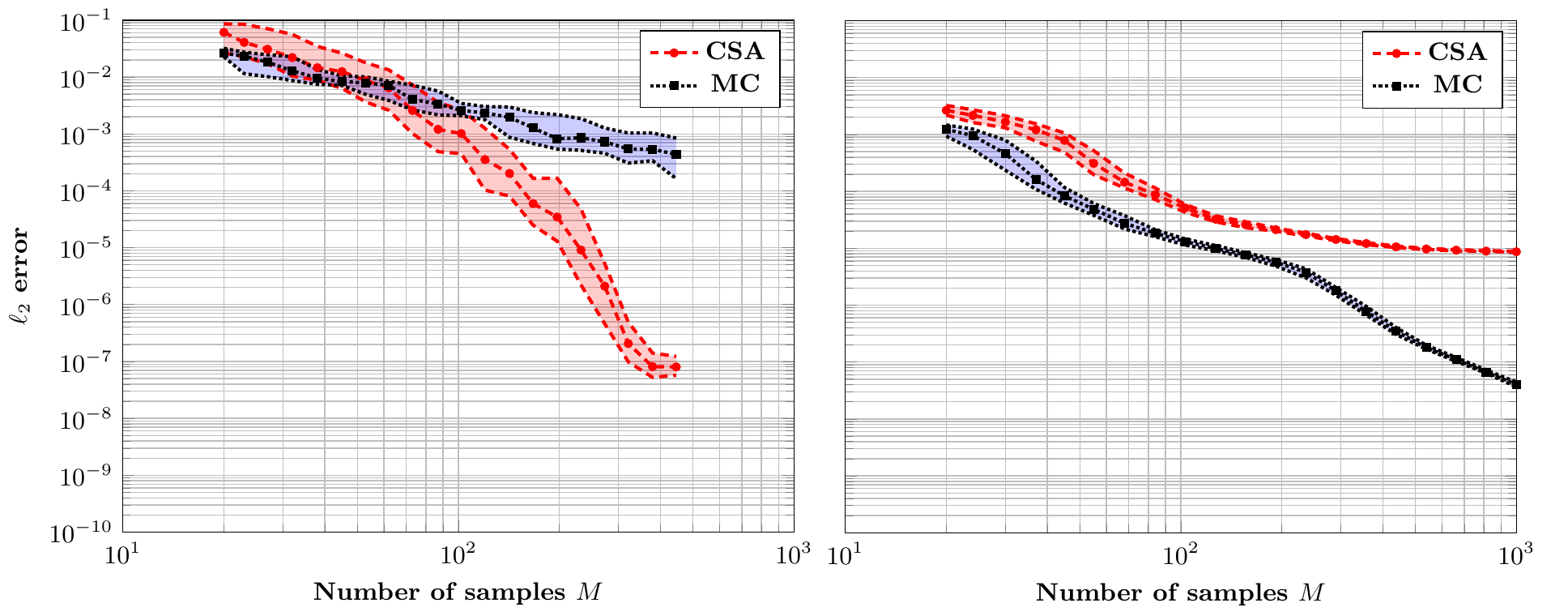}
\end{center}
\caption{The effect of dimension on the convergence of the CSA Hermite-PCE approximation of the diffusion equation~\eqref{eq:hetrogeneous-diffusion}. (Left) 30th degree polynomial in 2 dimensions.  (Right) 4th degree polynomial in 20 dimensions }
\label{fig:gaussian-dimension-comparison}
\end{figure}  

\begin{figure}[ht]
\begin{center}
\includegraphics[width=\textwidth]{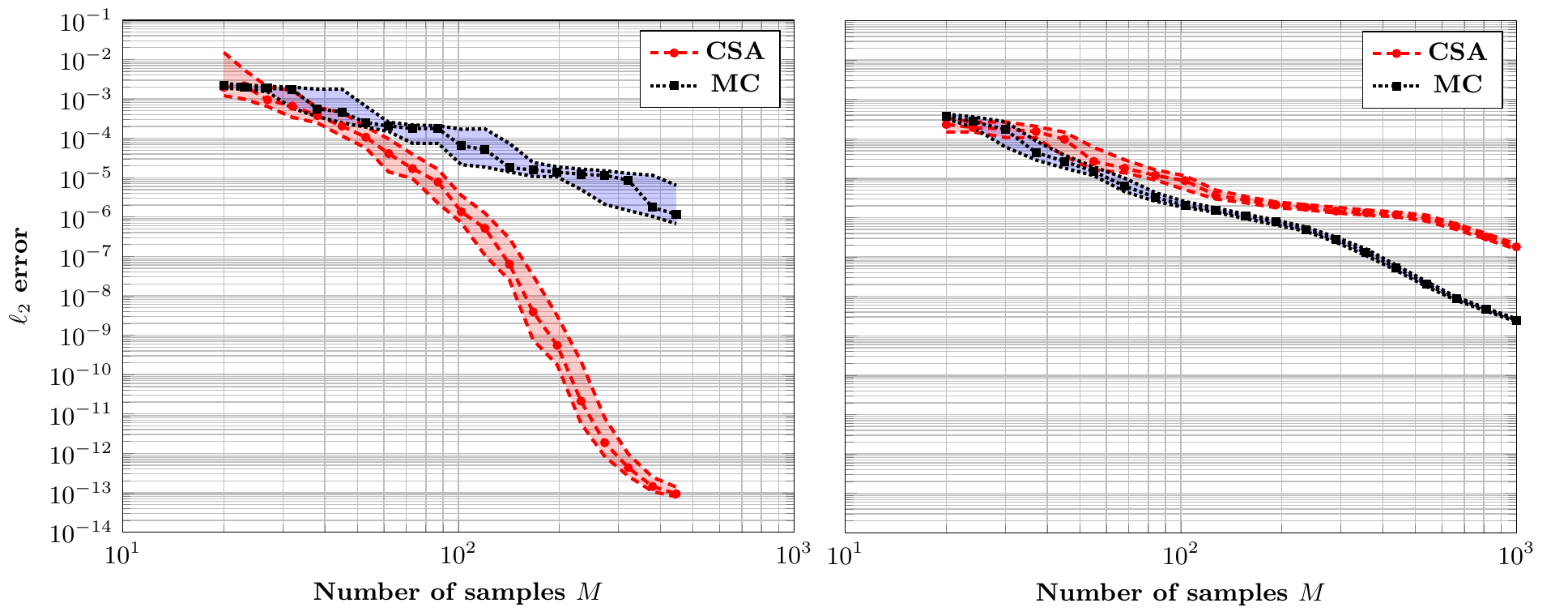}
\end{center}
\caption{The effect of dimension on the convergence of the CSA Jacobi-PCE approximation of the diffusion equation~\eqref{eq:hetrogeneous-diffusion}. (Left) 30th degree polynomial in 2 dimensions.  (Right) 4th degree polynomial in 20 dimensions }
\label{fig:beta-dimension-comparison}
\end{figure}  
Figures \ref{fig:gaussian-method-comparison}--\ref{fig:uniform-beta-method-comparison} compare CSA with the Gaussian asymptotic sampling and Chebyshev sampling methods respectively. In most cases CSA is more accurate than the alternative for a fixed sample size. For the diffusion problem tested the asymptotic sampling method is more accurate than CSA when using Hermite polynomial approximation in twenty dimensions.
\begin{figure}[ht]
\begin{center}
\includegraphics[width=\textwidth]{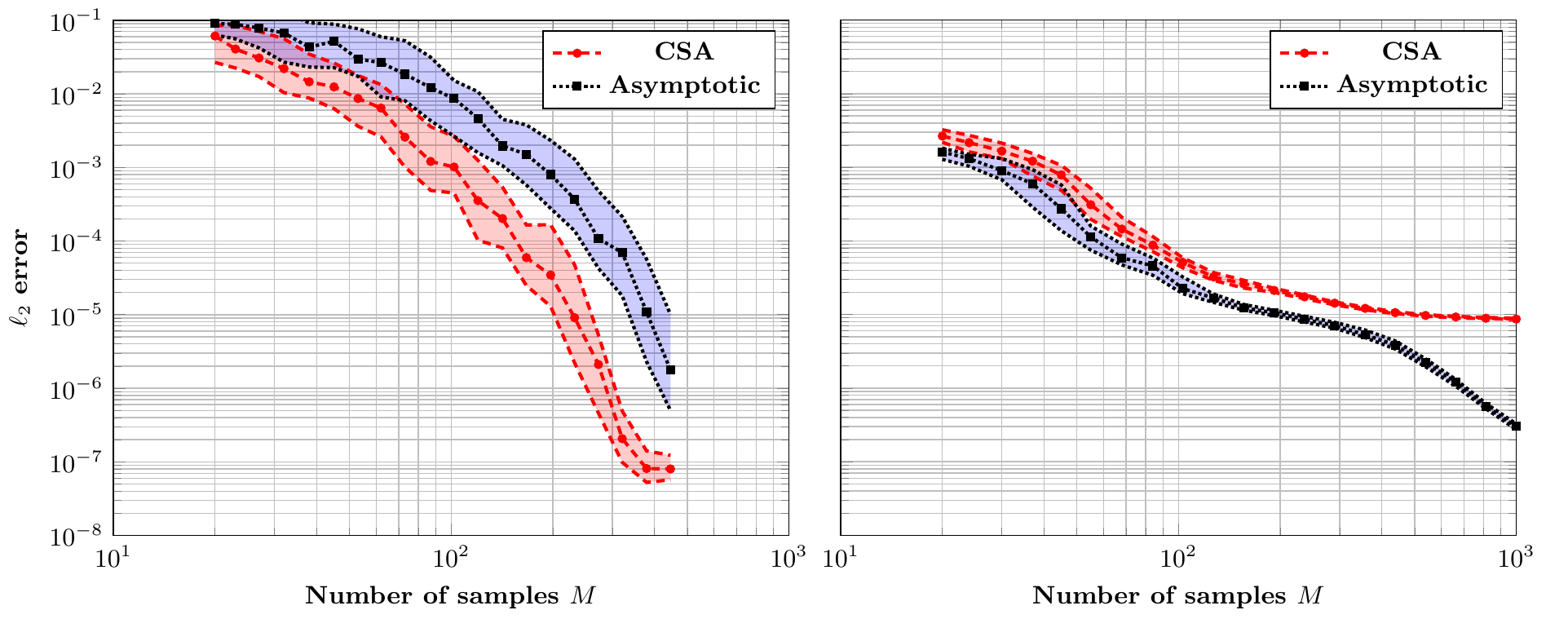}
\end{center}
\caption{Comparison of CSA with asymptotic method for (left) $d=2$ and (right) $d=20$ 
Hermite approximation.}
\label{fig:gaussian-method-comparison}
\end{figure}

\begin{figure}[ht]
\begin{center}
\includegraphics[width=\textwidth]{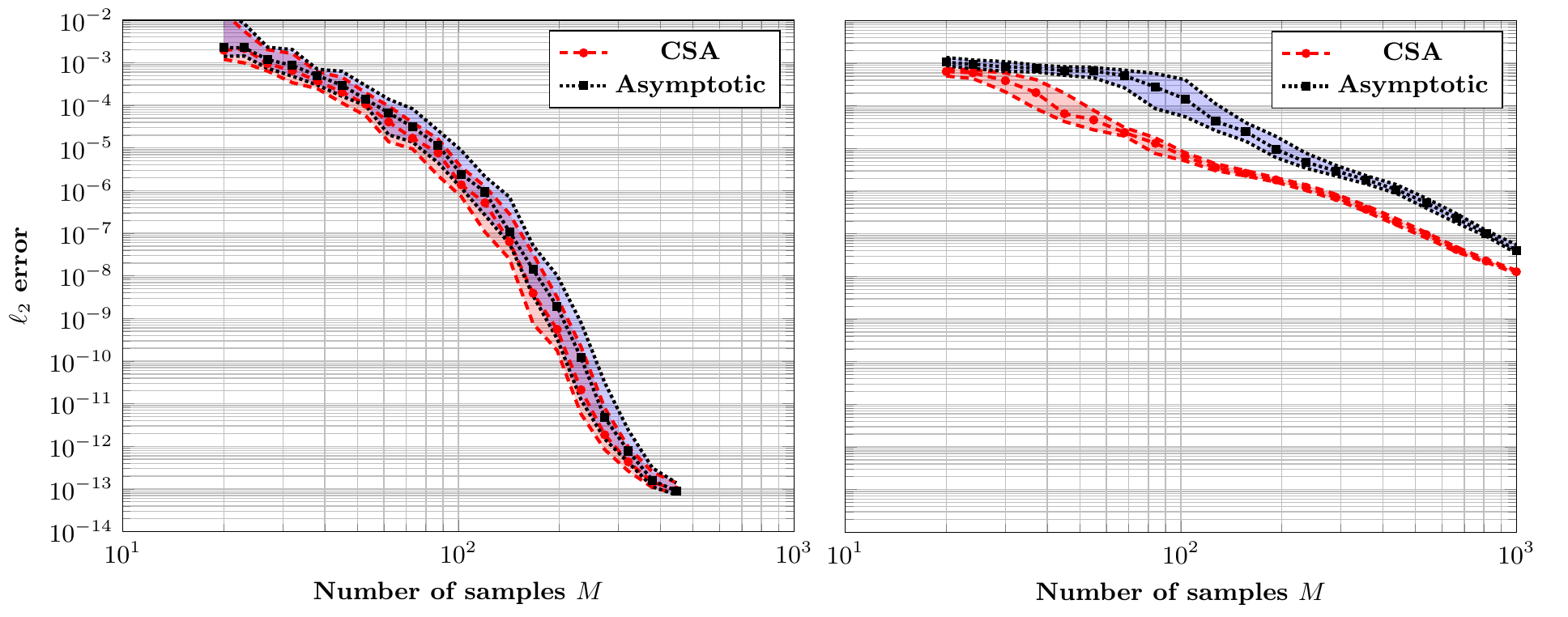}
\end{center}
\caption{Comparison of CSA with asymptotic method for (left) $d=2$ Jacobi approximation and (right)
$d=20$ Legendre approximation.}
\label{fig:uniform-beta-method-comparison}
\end{figure}

%
%

\section{conclusions}\label{sec:conclusions}

Building on the method proposed in \cite{Narayan_JJ_MC}, we propose using
equilibrium-measure-based sampling with Christoffel-function preconditioning to
recover sparse and compressible PCE representations. Unlike most existing algorithms,
the proposed CSA algorithm
can be applied to functions parameterized by random variables that have essentially
any type of standard distribution. Our thoretical and numerical results indicate that the CSA algorithm
is very efficient, that is requires a small number of samples to recover a $s$-sparse 
signal, when the maximum polynomial degree in the dictionary is
large. However convergence does deteriorate in some cases for small polynomial
degrees. 


\section*{Acknowledgments}
We gratefully acknowledge helpful discussions with Dr. Alireza Doostan and Dr. Jerrad Hampton.

This work was supported by the U.S. Department of Energy, Office of Science, Office 
of Advanced Scientific Computing Research, Applied Mathematics program. Sandia National 
Laboratories is a multi-program laboratory managed and operated by Sandia Corporation, 
a wholly owned subsidiary of Lockheed Martin Corporation, for the U.S. Department of Energy 
National Nuclear Security Administration under contract DE-AC04-94AL85000.

\appendix


\section{Proof of Theorem \ref{thm:bound-csa-a}}\label{sec:appendix-csa-a}

Throughout this section, we assume the CSA-a conditions: We let $\rv$ be a Beta-distributed random variable whose shape parameters $\alpha, \beta \geq -\frac{1}{2}$, with density 
\begin{align*}
  w(z) = (1-z)^{\alpha} (1+z)^{\beta}
\end{align*}
so that the PCE basis elements $\phi_k$ are Jacobi polynomials. We recall that the CSA sampling density $v(z)$ in this case is the Chebyshev density \eqref{eq:chebyshev-density}. We need the Christoffel function associated with this PCE family, whose definition is
\begin{align}\label{eq:christoffel-appendix}
  \lambda_n(\rvd) = \frac{1}{\sum_{j=0}^{n-1} \phi^2_j(x)}.
\end{align}
Associated to a degree-$n$ dictionary, we seek to establish the bound in \eqref{eq:bound-csa-a}, given by
\begin{align*}
  L(n) = \max_{k = 0, \ldots, n} \sup_{z \in [-1,1]} (n+1) \lambda_{n+1} \phi^2_{k} (z) \leq C,
\end{align*}
uniformly in $n$.

Our proof essentially chains together some well-known orthogonal polynomial bounds. In order to prove the result, we require two lemmas concerning the behavior of Jacobi polynomials.

\begin{lemma}[\cite{nevai_generalized_1994}]
  For all $\alpha, \beta \geq -\frac{1}{2}$ then uniformly in $n$ and $\rv \in [-1,1]$,
  \begin{align}\label{eq:nevai-jacobi-bound}
    \sup_{\rvd \in [-1,1]} n \lambda_n(z) \phi^2_{n-1}(\rvd)\leq C,
  \end{align}
  where $C = C(\alpha,\beta) \sim \sqrt{\alpha^2 + \beta^2}$.
\end{lemma}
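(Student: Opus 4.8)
The plan is to rewrite the quantity in \eqref{eq:nevai-jacobi-bound} using the reproducing-kernel structure and then bound it by localizing in $z$. Since $\lambda_n(z)^{-1} = \sum_{j=0}^{n-1}\phi_j^2(z)$ by \eqref{eq:christoffel-appendix}, writing $K_n(z,z) = \sum_{j=0}^{n-1}\phi_j^2(z)$ for the Christoffel--Darboux kernel on the diagonal gives
\[
  n\,\lambda_n(z)\,\phi_{n-1}^2(z) = \frac{n\,\phi_{n-1}^2(z)}{K_n(z,z)},
\]
so it suffices to prove that, uniformly in $z \in [-1,1]$, the top term $\phi_{n-1}^2(z)$ contributes at most an $O(1/n)$ fraction of the full sum $K_n(z,z)$. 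This is a classical comparison for orthonormal Jacobi polynomials, and my approach is to establish it region by region using standard pointwise estimates: Szeg\H{o}-type (trigonometric) bounds in the bulk, and Mehler--Heine/Bessel-type asymptotics near the endpoints.

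In the bulk, i.e.\ for $z = \cos\theta$ with $c/n \leq \theta \leq \pi - c/n$, I would invoke the classical uniform envelope $w(z)\sqrt{1-z^2}\,\phi_j^2(z) \leq C$, valid uniformly in $j$ and $z$, to obtain $\phi_{n-1}^2(z) \lesssim \big(w(z)\sqrt{1-z^2}\big)^{-1}$. Matching this, the one-sided Mat\'e--Nevai--Totik estimate $\lambda_n(z) \lesssim n^{-1} w(z)\sqrt{1-z^2}$, equivalently $K_n(z,z) \gtrsim n\,\big(w(z)\sqrt{1-z^2}\big)^{-1}$, holds uniformly on the bulk for Jacobi weights. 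Dividing, the common factor $\big(w\sqrt{1-z^2}\big)^{-1}$ cancels and the ratio is $\lesssim 1/n$, so the displayed quantity is $O(1)$ there.

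The endpoint regions are where the real work lies, and I expect the transition zone $1 - z \sim n^{-2}$ (and its mirror near $z=-1$) to be the main obstacle, since both $\phi_{n-1}^2$ and $K_n$ blow up there and only their ratio stays tame. Near $z=1$ with Jacobi exponent $\alpha$, the Mehler--Heine asymptotics give the sharp growth $\phi_j^2(z) \lesssim j^{2\alpha+1}$, hence $\phi_{n-1}^2(z) \lesssim n^{2\alpha+1}$, while summing the matching lower bounds yields $K_n(z,z) \gtrsim \sum_{j<n} j^{2\alpha+1} \gtrsim n^{2\alpha+2}$; the ratio is again $\lesssim 1/n$, and tracking the constants $2\alpha+2$ and $2\beta+2$ from the two endpoints reproduces the dependence $C \sim \sqrt{\alpha^2+\beta^2}$. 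Carrying this out rigorously requires the two-sided (not merely one-sided) Jacobi bounds valid uniformly across the transition region, which is exactly the content supplied by Nevai's generalized-Jacobi machinery \cite{nevai_generalized_1994}.

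A cleaner alternative I would also pursue uses the telescoping identity $\phi_{n-1}^2(z) = K_n(z,z) - K_{n-1}(z,z) = \lambda_n^{-1}(z) - \lambda_{n-1}^{-1}(z)$, which turns the target into
\[
  n\,\lambda_n(z)\,\phi_{n-1}^2(z) = n\left(1 - \frac{\lambda_n(z)}{\lambda_{n-1}(z)}\right).
\]
Thus the claim is equivalent to the statement that the relative decrement of the Christoffel function between consecutive degrees is $O(1/n)$ uniformly in $z$. This route is attractive because it works only with the monotone partial sums $K_n$ and sidesteps the oscillation of individual $\phi_j$; however, establishing the decrement bound still needs a uniform Christoffel asymptotic $n\lambda_n(z) = L(z)\big(1 + O(1/n)\big)$ with controlled error up to the endpoints, so the endpoint analysis above remains the essential ingredient.
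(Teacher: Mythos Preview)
The paper does not prove this lemma; it is quoted directly from \cite{nevai_generalized_1994} and used as a black box in the proof of Theorem~\ref{thm:bound-csa-a}. So there is no ``paper's own proof'' to compare against---only the cited source.

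Your outline is the standard way one would approach such a bound, and the reformulations you give (the kernel ratio $n\phi_{n-1}^2/K_n$ and the telescoping identity $n(1-\lambda_n/\lambda_{n-1})$) are both correct and useful. The bulk argument is fine. The endpoint argument, however, has a gap as written: you claim $K_n(z,z)\gtrsim \sum_{j<n} j^{2\alpha+1}$ by ``summing the matching lower bounds,'' but pointwise \emph{lower} bounds on $\phi_j^2(z)$ are not available uniformly across the transition region $1-z\sim n^{-2}$---individual $\phi_j$ oscillate and can vanish there. What one actually needs is a uniform two-sided Christoffel estimate of the type the paper quotes separately as \eqref{eq:christoffel-rho}. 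You acknowledge this at the end by deferring to ``Nevai's generalized-Jacobi machinery \cite{nevai_generalized_1994},'' but that is the very reference the lemma is taken from, so the proposal becomes circular at exactly the point where the content lies. If you want a self-contained argument, replace the endpoint lower bound on $K_n$ by a direct Christoffel-function lower bound (e.g.\ via the extremal characterization $\lambda_n(z)=\min_{\deg p<n,\,p(z)=1}\int p^2 w$ with a suitable test polynomial localized at $z$), which avoids lower-bounding individual $\phi_j^2$.
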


The following result states that $n \lambda_n(\rv) \approx \frac{w}{v}$ for large $n$.  
\begin{lemma}[\cite{nevai_orthogonal_1980}, Thm 6.3.28]
  Define a regularized version of $w/v$:
  \begin{align}\label{eq:regularized-rho}
    \rho_n(\rvd) = \left(\sqrt{1 - z} + \frac{1}{n} \right)^{2 \alpha+1} \left(\sqrt{1 + z} + \frac{1}{n} \right)^{2\beta + 1},
  \end{align}
  for $n \in \N$. Then there are constants $c_1$ and $c_2$ such that uniformly in $n \in \N$ and $z \in [-1,1]$,
  \begin{align}\label{eq:christoffel-rho}
    c_1 n \lambda_n(\rvd) \leq \rho_n(\rvd) \leq c_2 n \lambda_n(\rvd).
  \end{align}
\end{lemma}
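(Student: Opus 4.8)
The plan is to set aside the explicit orthonormal polynomials $\phi_j$ and work instead from the extremal characterization of the Christoffel function,
\[
\lambda_n(z) = \min\left\{ \int_{-1}^1 P^2(t)\, w(t)\, \dx{t} \;:\; P \in \mathcal{P}_{n-1},\ P(z) = 1 \right\},
\]
where $\mathcal{P}_{n-1}$ denotes algebraic polynomials of degree at most $n-1$. The two inequalities in \eqref{eq:christoffel-rho} then correspond to the two directions of this minimization: the left inequality $c_1 n\lambda_n(z) \le \rho_n(z)$ is an \emph{upper} bound on $\lambda_n(z)$, obtained by \emph{constructing} a good competitor polynomial, whereas the right inequality $\rho_n(z) \le c_2 n\lambda_n(z)$ is a \emph{universal lower} bound that must hold for every competitor. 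Throughout I would pass to the trigonometric variable $z = \cos\theta$, $\theta \in [0,\pi]$, under which the Chebyshev density $v$ becomes uniform and the endpoints $z=\pm1$ become $\theta = 0,\pi$. In this variable a window of width $1/n$ in $\theta$ has width $\asymp \sqrt{1-z^2}/n$ near interior $z$ but width $\asymp 1/n^2$ near the endpoints, which is exactly the transition encoded by the regularizing term $1/n$ in $\rho_n$.

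For the upper bound I would build a localized ``needle'' polynomial peaked at $z$. Concretely, take a nonnegative Jackson/de la Vall\'ee-Poussin-type trigonometric kernel of degree $\sim n$, normalized to value $1$ at $\theta_z = \arccos z$ and decaying like $(n\,|\theta-\theta_z|)^{-M}$ away from $\theta_z$, and symmetrize it into an even trigonometric polynomial that descends to an algebraic $P \in \mathcal{P}_{n-1}$ with $P(z)=1$. The weighted mass $\int P^2 w$ then concentrates on the $\theta$-window of width $1/n$ about $\theta_z$; estimating $w(\cos\theta) = (1-\cos\theta)^\alpha(1+\cos\theta)^\beta$ over that window and accounting for the Jacobian $\dx{z} = |\sin\theta|\,\dx{\theta}$ yields $\int P^2 w \lesssim \rho_n(z)/n$, hence $\lambda_n(z) \lesssim \rho_n(z)/n$. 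The saturated factors $\sqrt{1\mp z}+1/n$ appear precisely because, for $z$ within $O(1/n^2)$ of an endpoint, the $1/n$-window in $\theta$ reaches that endpoint and the effective weight levels off at its value at distance $1/n^2$.

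For the lower bound I would establish a localization inequality $P(z)^2 \lesssim \tfrac{n}{\rho_n(z)} \int_{J_z} P^2 w$ valid for all $P \in \mathcal{P}_{n-1}$, where $J_z$ is the $z$-image of the $\theta$-window about $\theta_z$. The mechanism is that a degree-$(n-1)$ polynomial cannot oscillate appreciably across a window of harmonic length $1/n$: a weighted Bernstein-Markov inequality controls $P'$ on $J_z$, so $\int_{J_z} P^2 \gtrsim |J_z|\, P(z)^2$. Since on $J_z$ the Jacobi weight is bounded below by a constant multiple of $\rho_n(z)/(\pi\sqrt{1-z^2})$, and $|J_z| \asymp \sqrt{1-z^2}/n$ (with the endpoint modification above), the $\sqrt{1-z^2}$ factors cancel and one gets $\int_{J_z} P^2 w \gtrsim \rho_n(z)\, P(z)^2/n$. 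Inserting $P(z)=1$ and taking the minimum over competitors gives $\lambda_n(z) \gtrsim \rho_n(z)/n$, i.e. $\rho_n(z) \le c_2 n\lambda_n(z)$.

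The main obstacle is uniformity in the boundary layers $1\mp z \lesssim 1/n^2$. In the interior the argument is the classical M\'at\'e-Nevai-Totik localization and in fact yields $n\lambda_n(z) \to w(z)/v(z)$; the genuine content of the lemma is that the same order estimates survive near the endpoints, which is exactly where $\rho_n$ departs from $w/v$. Controlling these layers requires either weighted-polynomial-inequality machinery of Mhaskar-Rakhmanov-Saff type, or---since $w$ is a pure power near each endpoint---direct estimates on Jacobi polynomials of the kind already invoked elsewhere in this appendix. This is the route of \cite{nevai_orthogonal_1980}, from which the stated two-sided bound is quoted.
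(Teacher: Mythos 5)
First, a point of calibration: the paper offers no proof of this lemma at all --- it is imported verbatim from \cite{nevai_orthogonal_1980} (Thm 6.3.28) and used as a black box in the proof of Theorem \ref{thm:bound-csa-a} --- so your sketch can only be judged against the standard literature argument. Your architecture is the right one (extremal characterization of $\lambda_n$; a needle polynomial for the direction $n\lambda_n(z) \lesssim \rho_n(z)$; a matching lower bound on every competitor), and the needle-polynomial half is essentially sound as written, modulo the usual bookkeeping that the kernel's decay exponent must exceed $2\max(\alpha,\beta)+2$ so the tails integrate against $w$, and that the weight must be estimated over the window rather than at $z$ near a hard edge.

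The genuine gap is in your lower bound. The step ``a weighted Bernstein--Markov inequality controls $P'$ on $J_z$, so $\int_{J_z} P^2 \gtrsim |J_z|\, P(z)^2$'' does not go through as stated. Bernstein bounds $P'$ in terms of the supremum of $P$ over the \emph{whole} interval (or circle), which you do not control; Markov applied on the window $J_z$ of length $\ell$ gives $\|P'\|_{J_z} \lesssim (n^2/\ell)\,\|P\|_{J_z}$ and hence only $\int_{J_z} P^2 \gtrsim (\ell/n^2)\, P(z)^2$ --- short by a factor of $n^2$ of what you need, since the target is $\int_{J_z} P^2 w \gtrsim \ell\, w \cdot P(z)^2 \asymp \rho_n(z) P(z)^2/n$. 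The localization inequality you invoke (comparability of the window-restricted Christoffel function with the global one) is in fact true, but it is itself a theorem --- for doubling weights it is due to Mastroianni and Totik --- whose proof again requires needle-polynomial duality together with $\lambda_{2n} \asymp \lambda_n$; so as written your argument is circular exactly where the content of the lemma lies. A secondary, repairable slip: the claim that $w$ is bounded below by a multiple of $\rho_n(z)/(\pi\sqrt{1-z^2})$ on all of $J_z$ fails when $J_z$ contains an endpoint and the corresponding exponent is positive (there $\inf_{J_z} w = 0$); one must restrict to the half-window away from the edge.

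The clean repair, and the route actually compatible with \cite{nevai_orthogonal_1980}, avoids localization entirely for this direction: write $\lambda_n(z)^{-1} = \sum_{j=0}^{n-1} \phi_j^2(z)$ and insert the classical Szeg\H{o}-type pointwise estimate $\phi_j^2(z) \lesssim 1/\rho_j(z)$ for Jacobi polynomials (a bound of the same flavor as, but logically independent of, \eqref{eq:nevai-jacobi-bound}, which must not be used here on pain of circularity). Since the exponents $2\alpha+1,\, 2\beta+1 \geq 0$ make $\rho_j(z) \geq c\,\rho_n(z)$ for $j \leq n$, summation gives $\sum_{j<n} \phi_j^2(z) \lesssim n/\rho_n(z)$, i.e., the missing inequality $\rho_n(z) \leq c_2\, n \lambda_n(z)$, uniformly through the boundary layers $1 \mp z \lesssim n^{-2}$ that you correctly identified as the crux.
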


We can now complete the proof of Theorem \ref{thm:bound-csa-a}. If $1 \leq k \leq n$, then from \eqref{eq:regularized-rho},
\begin{align}\label{eq:rho-domination}
  \rho_n(\rvd) \leq \rho_k(\rvd),
\end{align}
since $\alpha + \frac{1}{2} \geq 0$ and $\beta + \frac{1}{2} \geq 0$. Then we have
\begin{align*}
  n \lambda_n(\rvd) &\stackrel{\eqref{eq:christoffel-rho}}{\leq} c_3 \rho_n(\rvd) \\
                        &\stackrel{\eqref{eq:rho-domination}}{\leq} c_4 \rho_k(\rvd) \\
                        &\stackrel{\eqref{eq:christoffel-rho}}{\leq} c_5 k \lambda_k(\rvd).
\end{align*}
Thus, for $1 \leq k \leq n$,
\begin{align*}
  n \lambda_n(z) \left( \phi_{k-1}(\rvd)\right)^2 \leq c_5 k \lambda_k(\rvd) \left( \phi_{k-1}(\rvd)\right)^2 \stackrel{\eqref{eq:nevai-jacobi-bound}}{\leq} c_6,
\end{align*}
for all $z \in [-1,1]$. This proves \eqref{eq:bound-csa-a}.

\section{Proof of Theorem \ref{thm:bound-csa-b}}\label{sec:appendix-csa-b}
For the unbounded case, we use the notation of Section \ref{sec:univariate-sampling-whole}. Let $w(\rvd) = \exp(-|z|^\alpha)$ for $\rvd \in \R$ with $\alpha > 1$, defining $a^W_n(\sqrt{w})$ and $S^W_n(\sqrt{w})$ as in \eqref{eq:MRS-whole} and \eqref{eq:whole-sn}. To remove some notational clutter, we'll omit the `$W$' superscripts, i.e., in this section we write
\begin{align*}
  a^W_n &= a_n, & S^W_n &= S_n.
\end{align*}
The CSA sampling density $v_n(x)$ is defined in \eqref{eq:whole-line-sampling-density}. The $n$th Christoffel function is given by the formula \eqref{eq:christoffel-appendix}, with $\phi_k$ the orthonormal PCE basis associated to $w$. We define a slightly extended version of $S_n$, which depends on specification of some $L \geq 0$:
\begin{align}\label{eq:whole-sn-extended}
  S^\ast_n = \left[ -a_n\left(1 + L \eta_n\right), a_n \left(1 + L \eta_n\right)\right],
\end{align}
with $\eta_n = (\alpha n)^{-2/3}$. Many of the statements we make below present a constant $L \geq 0$, which defines $S^\ast_n$ through \eqref{eq:whole-sn-extended}.

The first result we cite quantifies how fast weighted polynomials outside of the interval $S_n$ decay.
\begin{lemma}[\cite{levin_orthogonal_2001}]\label{lemma:whole-restricted-range}
  With $w(\rvd) = \exp(-|\rvd|^\alpha)$ and $\alpha > 1$, let $r > 1$ be fixed. Then for any polynomial $p$ of degree $n$ or less,
  \begin{align}\label{eq:whole-restricted-range}
    \sup_{x \in \R} p^2(x) w(x) = \sup_{x \in S_n} p^2(x) w(x).
  \end{align}
  Furthermore, there exist constants $c_1, c_2 > 0$ such that, for $\eta > 1$ any real-valued number and for any polynomial $p$ of degree $\lfloor\eta\rfloor$ or less, 
  \begin{align}\label{eq:whole-restricted-range-decay}
    \sup_{\rvd \in \R \backslash S_{r \eta}} \left| p^2(\rvd) w(z) \right| \leq c_1 \exp(-c_2 \eta) \sup_{\rvd \in S_n} \left| p^2(z) w(z) \right|,
  \end{align}
  where $n = \deg p$. The constants $c_1$ and $c_2$ do not depend on $\eta$ or $n$.
\end{lemma}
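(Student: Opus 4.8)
The plan is to recognize this as the classical infinite--finite range inequality for Freud-type weights and to derive it from weighted potential theory, following the approach in \cite{saff_logarithmic_1997,levin_orthogonal_2001}. Write $W = \sqrt{w} = \exp(-Q)$ with $Q(z) = \tfrac12|z|^\alpha$, so that $p^2 w = (pW)^2$ and both claims reduce to estimating $\|pW\|_{L^\infty}$ for $\deg p \le n$. First I would rescale by the Mhaskar--Rakhmanov--Saff number, setting $x = a_n t$; since $Q(a_n t) = n\,q(t)$ with $q(t) = \tfrac12 (a^W)^\alpha |t|^\alpha$, a degree-$n$ polynomial $p$ becomes a degree-$n$ polynomial $\tilde p(t) = p(a_n t)$ and $p(x)W(x) = \tilde p(t)\,e^{-n q(t)}$. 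By the very definition of $a_n$, the weighted equilibrium measure $\mu_q$ for the external field $q$ is a probability measure supported \emph{exactly} on $[-1,1]$, with Euler--Lagrange (Frostman) conditions
\[
  U^{\mu_q}(t) + q(t) = F_q \ \text{ on } [-1,1], \qquad U^{\mu_q}(t) + q(t) \ge F_q \ \text{ on } \R,
\]
where $U^{\mu_q}(t) = \int \log|t-s|^{-1}\,\mathrm{d}\mu_q(s)$ and $F_q$ is the modified Robin constant.

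The heart of the argument is a pointwise domination bound. Let $M = \|\tilde p\, e^{-nq}\|_{L^\infty([-1,1])}$ and let $\nu$ be the normalized zero-counting measure of $\tilde p$, a positive measure of mass $\deg \tilde p / n \le 1$, so that $\tfrac1n\log|\tilde p| = -U^\nu + \tfrac1n\log|c|$ with $c$ the leading coefficient. Combining the definition of $M$ with the \emph{equality} in the Euler--Lagrange conditions shows $U^{\mu_q} \le U^\nu + \kappa$ on $[-1,1]$ for the explicit constant $\kappa = \tfrac1n\log(M/|c|) + F_q$. Since $\nu(\mathbb{C}) \le 1 = \mu_q(\mathbb{C})$ and $\mu_q$ has finite energy, the Principle of Domination propagates this inequality to all of $\mathbb{C}$. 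Unwinding the potentials on the real line yields the key estimate
\[
  |\tilde p(t)|\,e^{-nq(t)} \le M\,\exp\!\big(-n[\,U^{\mu_q}(t) + q(t) - F_q\,]\big), \qquad t \in \R.
\]
Because the bracket is nonnegative everywhere and vanishes on $[-1,1]$, taking the supremum over $t$ gives the first claim \eqref{eq:whole-restricted-range}.

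For the decay estimate \eqref{eq:whole-restricted-range-decay}, I would feed the same pointwise bound a quantitative lower bound on $G(t) := U^{\mu_q}(t) + q(t) - F_q$ for $|t|$ bounded away from $1$. In the rescaled variable, $\R\setminus S_{r\eta}$ corresponds to $|t| \ge a_{r\eta}/a_n = (r\eta/n)^{1/\alpha} \ge r^{1/\alpha} > 1$, using $n = \deg p \le \eta$. Two facts about $G$ suffice: $G(t) \ge c_0(r) > 0$ at the boundary value $|t| = r^{1/\alpha}$ (strict positivity off the support), and $G(t) \ge c\,|t|^\alpha$ for large $|t|$, since $q$ grows like $|t|^\alpha$ while $U^{\mu_q}(t) = -\log|t| + O(1)$. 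A short case split on whether $(r\eta/n)^{1/\alpha}$ lies in a bounded range or is large then shows $n\,G(t) \ge c_2\,\eta$ uniformly for $t \in \R\setminus S_{r\eta}$, which converts the pointwise bound into \eqref{eq:whole-restricted-range-decay}.

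I expect the main obstacle to be the uniformity of the decay constants in \emph{both} $n$ and $\eta$: the estimate must hold when $\deg p$ is much smaller than $\eta$, which is precisely why the $|t|^\alpha$-growth of $G$ (and not merely its positivity) is needed, and pinning $G$ down requires the sharp MRS characterization that the support is exactly $[-a_n,a_n]$ with the stated constant $a^W$. The equilibrium-measure facts (existence, uniqueness, Euler--Lagrange conditions) and the Principle of Domination are classical for Freud weights and can be cited from \cite{saff_logarithmic_1997,levin_orthogonal_2001}; the remaining labor is the potential-theoretic growth estimate for $G$.
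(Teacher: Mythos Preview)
The paper does not give its own proof of this lemma: it is stated with the citation \cite{levin_orthogonal_2001} and used as a black box in Appendix~\ref{sec:appendix-csa-b}. So there is no ``paper's proof'' to compare against beyond the attribution.

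Your sketch is the standard potential-theoretic derivation of the Mhaskar--Saff restricted-range inequality for Freud weights, which is exactly the route taken in the cited references \cite{saff_logarithmic_1997,levin_orthogonal_2001}. The rescaling by $a_n$, the Euler--Lagrange characterization of the equilibrium measure, and the Principle of Domination are the correct ingredients, and your identification of $G(t) = U^{\mu_q}(t) + q(t) - F_q$ as the decisive quantity is right on target. One small point worth making explicit in the case split for \eqref{eq:whole-restricted-range-decay}: you need not only $G(r^{1/\alpha}) > 0$ but also that $G$ is nondecreasing in $|t|$ on $|t| > 1$ (or at least bounded below by its boundary value there), so that the bound $G(t) \ge c_0(r)$ persists for all $|t| \ge r^{1/\alpha}$ in the ``bounded'' regime; for Freud weights this monotonicity is standard (it follows from $G' = q' - \int (t-s)^{-1}\,\mathrm{d}\mu_q \ge 0$ for $t > 1$, or directly from the explicit density), but it should be stated. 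With that, the two-regime split you outline does yield the uniform $nG(t) \ge c_2\eta$ and hence \eqref{eq:whole-restricted-range-decay}.
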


To proceed further we will need the auxiliary function $\varphi_n$, which is a regularized version of $\left[ n v_n(x) \right]^{-1}$:
\begin{align}\label{eq:varphi-def}
  \varphi_n(x) = \left\{ \begin{array}{rcl}
      \frac{\left(a_{2n}\right)^2 - x^2}{n \sqrt{ \left( \left|x - a_{n}^W\right| + a_{n} \eta_n \right) \left( \left|x + a_{n}^W\right| + a_{n} \eta_n \right)}}, & & x \in S_{n}, \\
      \varphi_n\left(a_{n}\right) , & & x \not\in S_{n},
    \end{array} \right.
\end{align}
The auxiliary function $\varphi_n$ is distinct from the degree-$n$ orthonormal polynomial $\phi_n$. The ``boundary" value of $\varphi_n$ satisfies
\begin{align*}
  \varphi_n\left(a_{n}\right)  = n^{1/\alpha - 2/3} \left[ \left(2^{2/\alpha - 1}\right) a_1 \alpha^{1/3} \left( 2 + \eta_n\right)^{-1/2} \right] \leq c(\alpha) a_n n^{-2/3}
\end{align*}
Note that 
\begin{align}\label{eq:whole-varphi-bound}
  \sup_{x \in \R} \varphi_n(x) \leq c a_n n^{-2/3}
\end{align}
We can now state estimates for Christoffel functions.
\begin{lemma}[Corollary 9.4 of \cite{levin_orthogonal_2001}]\label{lemma:christoffel-whole-bounds}
  Let $w = \exp(-|x|^\alpha)$ on $\R$ with $\alpha > 1$. Let $L > 0$, which defines $S_n^\ast$ through \eqref{eq:whole-sn-extended}. Then 
  \begin{enumerate}[label=\alph*)]
  \begin{subequations}
    \item There are constants $c_1$, $c_2$, such that uniformly in $n$ and $x \in S^\ast_n$,
      \begin{align}\label{eq:christoffel-whole-sim}
        c_1 w(x) \varphi_n(x) \leq \lambda_n(x) \leq c_2 w(x) \varphi_n(x).
      \end{align}
    \item There is a constant $c_3$ such that, uniformly in $n$ and $x \in \R$,
      \begin{align}\label{eq:christoffel-whole-lower}
        c_3 w(x) \varphi_n(x) \leq \lambda_n(x) 
      \end{align}
  \end{subequations}
  \end{enumerate}
\end{lemma}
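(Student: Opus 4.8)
The statement is a known result from the theory of orthogonal polynomials for exponential weights, so the plan is to sketch the argument one would use to establish it, following the general strategy of \cite{levin_orthogonal_2001}. The natural starting point is the extremal characterization of the Christoffel function: for fixed $x_0$,
\[
  \lambda_n(x_0) = \min_{\substack{\deg P \le n-1 \\ P(x_0) = 1}} \int_{\R} P^2(t)\, w(t)\, \dx{t}.
\]
Both inequalities in \eqref{eq:christoffel-whole-sim} then reduce to producing, on the one hand, a near-extremal competitor $P$ (for the upper bound) and, on the other hand, a lower bound valid for \emph{every} admissible $P$. The correct scale for both is the quantity $\varphi_n(x_0)$ defined in \eqref{eq:varphi-def}, which is a regularized version of the local zero spacing $1/(n v_n)$ of the $w$-orthonormal polynomials; intuitively $\int P^2 w \sim w(x_0)\,(\text{width of the concentration of } P)$, and that width is $\varphi_n(x_0)$.

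For the upper bound I would construct a test polynomial concentrated on an interval of length $\sim \varphi_n(x_0)$ about $x_0$, normalized by $P(x_0)=1$, using a localized kernel (for instance a de la Vall\'ee Poussin or Fej\'er-type polynomial adapted to the equilibrium measure) whose weighted $L^2$ mass is $\lesssim w(x_0)\varphi_n(x_0)$. The restricted range inequality \eqref{eq:whole-restricted-range}--\eqref{eq:whole-restricted-range-decay} (Lemma \ref{lemma:whole-restricted-range}) is then used to discard the contribution of $t$ outside $S_n$, reducing the global integral to one over the compact set where the weighted polynomial actually lives. For the lower bound, given any admissible $P$, a weighted Bernstein--Markov inequality at the local scale shows that $P\sqrt{w}$ cannot change by more than a constant factor across an interval of length $\sim \varphi_n(x_0)$ centered at $x_0$; since $P(x_0)\sqrt{w(x_0)} = \sqrt{w(x_0)}$, this forces $P^2 w \gtrsim w(x_0)$ on that interval and hence $\int P^2 w \gtrsim w(x_0)\varphi_n(x_0)$. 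Matching the implied constants yields \eqref{eq:christoffel-whole-sim}. Part (b), the lower bound \eqref{eq:christoffel-whole-lower} on all of $\R$, then follows by combining the interior lower bound with the decay estimate \eqref{eq:whole-restricted-range-decay}: outside $S_n^\ast$ the factor $\varphi_n$ is frozen at its boundary value $\varphi_n(a_n)$ (see \eqref{eq:varphi-def} and \eqref{eq:whole-varphi-bound}), and $w\varphi_n$ decays at least as fast as $\lambda_n$ there.

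The main obstacle is uniformity in $x_0$ across the whole of $S_n^\ast$, and in particular the behavior near the soft edges $x_0 = \pm a_n$. There the equilibrium density --- and hence $1/\varphi_n$ --- vanishes like a square root, the local zero spacing is no longer $\sim 1/(n v_n)$ in the naive sense, and the concentration interval for the test polynomial must instead be chosen on the $\eta_n = (\alpha n)^{-2/3}$ scale. This is precisely why $\varphi_n$ carries the regularizing terms $a_n \eta_n$ in \eqref{eq:varphi-def}, and why $S_n^\ast$ is an $\mathcal{O}(a_n \eta_n)$ enlargement of $S_n$ in \eqref{eq:whole-sn-extended}. Controlling both the polynomial construction and the Bernstein--Markov step uniformly through this edge region --- which requires sharp Mhaskar--Rakhmanov--Saff asymptotics for $a_n$ together with Airy-type local information on the equilibrium density --- is the technical heart of the argument, and the reason one ultimately appeals to the machinery of \cite{levin_orthogonal_2001} rather than reproving the estimate from scratch.
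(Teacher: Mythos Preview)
The paper does not prove this lemma at all: it is stated as a direct citation of Corollary 9.4 in \cite{levin_orthogonal_2001}, with no argument given. Your proposal therefore goes well beyond what the paper does, supplying a sketch of the underlying proof from the cited reference. The outline you give (extremal characterization of $\lambda_n$, upper bound via a localized test polynomial, lower bound via a local Bernstein--Markov step, with the $\eta_n$ edge regularization accounting for the Mhaskar--Rakhmanov--Saff soft-edge scale) is an accurate high-level summary of the Levin--Lubinsky machinery. For the purposes of this paper, however, a one-line citation suffices; your sketch would be appropriate if the goal were to make the exposition more self-contained.
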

One can use these estimates to show bounds on orthogonal polynomials.
\begin{lemma}[Theorem 13.2 of \cite{levin_orthogonal_2001}]\label{lemma:poly-whole-bounds}
  Let $w = \exp(-|x|^\alpha)$ on $\R$ with $\alpha > 1$. Then uniformly in $n \geq 1$,
  \begin{align}\label{eq:poly-whole-sup}
    \sup_{x \in \R} \phi_n^2(x) w(x) \leq C \frac{n^{1/3}}{a_n}.
  \end{align}
\end{lemma}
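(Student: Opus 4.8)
The plan is to reduce the global supremum to the Mhaskar--Rakhmanov--Saff interval $S_n = [-a_n, a_n]$, establish a sharp pointwise amplitude estimate there, and then maximize that estimate at the endpoints $\pm a_n$, where the extremal behavior concentrates. The first step is immediate from Lemma \ref{lemma:whole-restricted-range}: since $\phi_n$ has degree $n$, the restricted-range inequality \eqref{eq:whole-restricted-range} gives $\sup_{x\in\R}\phi_n^2(x) w(x) = \sup_{x\in S_n}\phi_n^2(x)w(x)$, so nothing is lost outside $S_n$ and it suffices to work on $S_n$.

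The core of the argument is a pointwise bound on $S_n$ of the form
\begin{align*}
  \phi_n^2(x)\, w(x) \leq \frac{C}{a_n}\left(1 - \frac{x^2}{a_n^2} + \eta_n\right)^{-1/2}, \qquad x \in S_n,
\end{align*}
with $\eta_n = (\alpha n)^{-2/3}$ as in \eqref{eq:varphi-def}. Granting this, the conclusion follows quickly: the right-hand side is increasing in $|x|$ and is therefore maximized at $x = \pm a_n$, where the term $1 - x^2/a_n^2$ vanishes and only the regularization $\eta_n$ survives, giving $\sup_{x\in S_n}(\,\cdot\,)^{-1/2} = \eta_n^{-1/2} = (\alpha n)^{1/3}$. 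Hence $\sup_{x\in S_n}\phi_n^2 w \leq C(\alpha n)^{1/3}/a_n = C' n^{1/3}/a_n$, which is the claimed bound. Note that in the bulk $|x|$ bounded away from $a_n$ the same estimate yields only the far smaller value $C/a_n$, consistent with the fact that the extremal amplitude of $\phi_n\sqrt{w}$ is concentrated near the turning points.

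The main obstacle is producing this pointwise bound, and in particular isolating the amplitude of the single top mode $\phi_n$ from the Christoffel function, which aggregates $\phi_0,\ldots,\phi_n$. The naive route starts from $\phi_n^2(x)\leq 1/\lambda_{n+1}(x)$ together with the Christoffel lower bound \eqref{eq:christoffel-whole-lower}, $\lambda_{n+1}(x)\geq c\,w(x)\varphi_{n+1}(x)$, which controls $\phi_n^2 w$ by $C/\varphi_{n+1}$; but by \eqref{eq:whole-varphi-bound} one has $\varphi_n(a_n)\sim a_n n^{-2/3}$, so this only delivers $n^{2/3}/a_n$, overestimating the truth by a factor $n^{1/3}$. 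This gap is intrinsic: $\phi_n^2$ is but one of the $n+1$ summands in $\sum_{k=0}^n\phi_k^2 = 1/\lambda_{n+1}$, and near $\pm a_n$ the turning points of many modes cluster, so the single top mode carries far less than the full Christoffel sum. I therefore do not expect the two-sided comparison $\lambda_{n+1}\sim w\varphi_{n+1}$ of Lemma \ref{lemma:christoffel-whole-bounds} to suffice on its own. The route I would take is the classical turning-point analysis: on the scale $\eta_n$ around $\pm a_n$ the weighted polynomial $\phi_n\sqrt{w}$ is governed by an Airy-type profile whose maximal amplitude squared is of order $n^{1/3}/a_n$, while the Christoffel estimates \eqref{eq:christoffel-whole-sim} handle the bulk oscillatory regime. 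Matching these two regimes into the single display above is the technical heart of the proof and the step I expect to be hardest.
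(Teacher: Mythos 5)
The paper never proves this lemma: it is quoted verbatim as Theorem 13.2 of \cite{levin_orthogonal_2001}, so the only proof to compare against is the one in that monograph. Your outline correctly reconstructs its architecture. The reduction to $S_n$ via \eqref{eq:whole-restricted-range} is valid (apply it with $p = \phi_n$), and your pointwise estimate $\phi_n^2(x)\,w(x) \leq C a_n^{-1}\left(1 - x^2/a_n^2 + \eta_n\right)^{-1/2}$ on $S_n$, maximized at $x = \pm a_n$ where it yields $\eta_n^{-1/2} = (\alpha n)^{1/3}$, is essentially the form in which Levin--Lubinsky state their result. But that pointwise estimate \emph{is} the content of the cited theorem --- it is strictly stronger than the sup bound you are asked to prove --- and you do not establish it; you explicitly defer it as ``the technical heart.'' So as a self-contained argument the proposal has a genuine gap: you have reduced the lemma to an equivalent-or-stronger known statement plus an accurate roadmap, which is no more than the paper itself does by citation. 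Your diagnosis of why the Christoffel comparison cannot close the gap (the top mode is one of $n+1$ summands in $1/\lambda_{n+1}$) is sound, and for what it is worth, the source does not actually prove the missing step by Airy-profile matching either --- for general $\alpha > 1$ such asymptotics were not available, and the bound is instead extracted from Christoffel-function estimates like \eqref{eq:christoffel-whole-sim} combined with zero-spacing and Markov--Bernstein/quadrature-sum arguments --- so even your sketch of the hard step is a heuristic for, not a route to, the proof.

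One quantitative slip is worth correcting: you claim the naive route via $\phi_n^2 \leq 1/\lambda_{n+1}$ and \eqref{eq:christoffel-whole-lower} ``only delivers $n^{2/3}/a_n$.'' In fact that route gives $\phi_n^2(x) w(x) \leq C/\varphi_{n+1}(x)$, and from \eqref{eq:varphi-def} the function $\varphi_{n+1}$ attains its \emph{minimum} of order $a_n/n$ in the bulk (e.g.\ at $x = 0$), not at $\pm a_n$ where it is of order $a_n n^{-2/3}$ per \eqref{eq:whole-varphi-bound}; so the global bound the naive route yields is $C n / a_n$, losing a full factor of $n$ in the bulk and $n^{1/3}$ at the edge. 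This makes your structural point stronger, not weaker, but the accounting as written evaluates the bound only at the endpoint and misreports what the naive argument produces over all of $S_n$.
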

Using the Lemmas above, we can show:
\begin{lemma}\label{lemma:whole-exponential-bound}
  Let $w = \exp(-|z|^\alpha)$ on $\rvd \in \R$ with $\alpha > 1$. Let $L \geq 0$ be fixed, defining $S_n^\ast$.
  \begin{subequations}
  \begin{enumerate}[label=(\alph*)]
  \item Uniformly in $n$ and $0 < k < n$,
    \begin{align}\label{eq:whole-exponential-bound-1}
      \sup_{\rvd \in S_n^\ast} n \lambda_n(z) \phi^2_{k}(z) &\leq C (k n)^{1/3} \left(\frac{n}{k}\right)^{1/\alpha}.
    \end{align}
    When $k=0$, uniformly in $n$,
    \begin{align}\label{eq:whole-exponential-bound-1-k-0}
      \sup_{\rvd \in S_n^\ast} n \lambda_n(z) \phi^2_{0}(z) &\leq C n^{1/3+1/\alpha}.
    \end{align}
  \item For any fixed $0 \leq \delta < 1$, then uniformly in $n$
    \begin{align}\label{eq:whole-exponential-bound-3}
      \max_{\delta n \leq k < n} \sup_{\rvd \in S_n^\ast} n \lambda_n(z) \phi^2_{k}(z) &\leq C n^{2/3}.
    \end{align}
  \item When $\alpha \geq 3$, then uniformly in $n$
    \begin{align}\label{eq:whole-exponential-bound-2}
      \max_{0 \leq k < n} \sup_{\rvd \in S_n^\ast} n \lambda_n(z) \phi^2_{k}(z) &\leq C n^{2/3}.
    \end{align}
  \end{enumerate}
  \end{subequations}
  In all the cases above, $C = C(\alpha,L,\delta)$.
\end{lemma}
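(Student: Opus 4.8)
The plan is to prove part (a) directly by chaining three of the cited estimates, and then to deduce parts (b) and (c) as elementary consequences of (a). The three ingredients are: the Christoffel upper bound $\lambda_n(z) \leq c_2 w(z) \varphi_n(z)$ valid on $S_n^\ast$ from \eqref{eq:christoffel-whole-sim}; the weighted sup-norm polynomial bound $\sup_z \phi_k^2(z) w(z) \leq C k^{1/3}/a_k$ from \eqref{eq:poly-whole-sup}; and the uniform auxiliary bound $\sup_z \varphi_n(z) \leq c\, a_n n^{-2/3}$ from \eqref{eq:whole-varphi-bound}. Because the supremum in the lemma is taken only over $S_n^\ast$, the restricted-range decay estimates of Lemma \ref{lemma:whole-restricted-range} are not needed here; those would only enter when extending a supremum to all of $\R$.

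For part (a) with $0 < k < n$, I would fix $z \in S_n^\ast$ and regroup the product as
\begin{align*}
  n \lambda_n(z) \phi_k^2(z) \leq c_2\, n\, \varphi_n(z) \left[ w(z) \phi_k^2(z) \right].
\end{align*}
The bracketed factor is bounded by $C k^{1/3}/a_k$ uniformly in $z$, and $\varphi_n(z) \leq c\, a_n n^{-2/3}$, so the right-hand side is at most $C' n^{1/3} a_n \bigl(k^{1/3}/a_k\bigr)$. Recalling from \eqref{eq:MRS-whole} that $a_n = n^{1/\alpha} a^W$, so that $a_n/a_k = (n/k)^{1/\alpha}$, this collapses to $C' (nk)^{1/3}(n/k)^{1/\alpha}$, which is \eqref{eq:whole-exponential-bound-1}. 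The case $k=0$ is handled separately: since $\phi_0$ is constant, $w(z)\phi_0^2(z)$ is bounded by an $\mathcal{O}(1)$ constant, and the same chain gives $n \lambda_n(z)\phi_0^2(z) \leq C n^{1/3} a_n = C n^{1/3+1/\alpha}$, which is \eqref{eq:whole-exponential-bound-1-k-0}.

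Parts (b) and (c) then follow by bookkeeping on exponents, writing the bound from (a) as $n^{1/3+1/\alpha}\, k^{1/3-1/\alpha}$. For part (b), when $\delta n \leq k < n$ the factor $(n/k)^{1/\alpha} \leq \delta^{-1/\alpha}$ is bounded and $(nk)^{1/3} \leq n^{2/3}$, yielding $\leq C n^{2/3}$. For part (c), when $\alpha \geq 3$ the exponent $1/3 - 1/\alpha \geq 0$, so $k^{1/3-1/\alpha}$ is nondecreasing in $k$ and is maximized as $k \to n$, giving $n^{1/3+1/\alpha}\cdot n^{1/3-1/\alpha} = n^{2/3}$; the $k=0$ endpoint gives $n^{1/3+1/\alpha} \leq n^{2/3}$ since $1/\alpha \leq 1/3$.

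The genuine analytic difficulty is entirely absorbed into the cited Levin--Lubinsky lemmas (the Christoffel asymptotics \eqref{eq:christoffel-whole-sim} and the weighted polynomial sup-norm bound \eqref{eq:poly-whole-sup}); granting those, this lemma is essentially bookkeeping. The one point demanding care is to keep the $k$- and $n$-dependence separate --- in particular to use $a_k$, not $a_n$, in the polynomial bound --- since it is precisely the mismatch $a_n/a_k = (n/k)^{1/\alpha}$ that produces the sharp exponent, and conflating the two would give a weaker, non-sharp estimate. I would also verify that the Christoffel upper bound is invoked only on $S_n^\ast$, where \eqref{eq:christoffel-whole-sim} is valid, so that the regrouping above is legitimate at every point of the supremum.
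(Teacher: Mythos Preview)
Your proposal is correct and follows essentially the same route as the paper's own proof: chain the Christoffel upper bound \eqref{eq:christoffel-whole-sim}, the auxiliary bound \eqref{eq:whole-varphi-bound}, and the weighted polynomial sup-norm bound \eqref{eq:poly-whole-sup} to obtain part (a), then deduce (b) and (c) by elementary exponent bookkeeping on $(nk)^{1/3}(n/k)^{1/\alpha}$. Your treatment of part (c) via monotonicity of $k^{1/3-1/\alpha}$ is a cosmetic variant of the paper's inequality $(n/k)^{1/\alpha}\leq (n/k)^{1/3}$, and you are slightly more careful in explicitly checking the $k=0$ endpoint there.
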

\begin{proof}
  Proving \eqref{eq:whole-exponential-bound-1} is a simple chaining of results from Lemmas \ref{lemma:christoffel-whole-bounds} and \ref{lemma:poly-whole-bounds}. For $x \in S^\ast_n$ and $k > 0$,
  \begin{align*}
      n \lambda_n(x) \phi_k^2(x) &\stackrel{\eqref{eq:christoffel-whole-sim}}{\leq} c_1 n \varphi_n(x) w(x) \phi_k^2(x) \\
                                 &\leq c_2 a_n n^{1/3} w(x) \phi_k^2(x) \\
                                 &\stackrel{\eqref{eq:poly-whole-sup}}{\leq} c_3 \left(\frac{a_n}{a_k}\right) n^{1/3} k^{1/3} \\
                                 &\leq c_4 \left(\frac{n}{k}\right)^{1/\alpha} (n k)^{1/3}.
  \end{align*}
  The $k=0$ bound \eqref{eq:whole-exponential-bound-1-k-0} is obtained by repeating the above procedure for the special case with $\phi_0^2(\rvd) \equiv c_5$. The results (b) and (c) are shown by manipulation of the right-hand side of \eqref{eq:whole-exponential-bound-1}. To show (b), we assume $\delta n \leq k < n$, and so 
  \begin{align*}
    \left(\frac{n}{k}\right)^{1/\alpha} (n k)^{1/3} \leq \delta^{-1/\alpha} (n k)^{1/3} \leq c_5 n^{2/3}.
  \end{align*}
  Under the assumption that $\alpha \geq 3$, then (c) follows:
  \begin{align*}
    \left(\frac{n}{k}\right)^{1/\alpha} (n k)^{1/3} \leq \left(\frac{n}{k}\right)^{1/3} (n k)^{1/3} = n^{2/3}.
  \end{align*}
\end{proof}
Note that \eqref{eq:whole-exponential-bound-2} is the conclusion of the theorem, but requires $\alpha \geq 3$. To extend the result to $1 \leq \alpha < 3$, we need a sharper analysis. 
\begin{lemma}\label{lemma:whole-low-degree-bound}
  With $w(z) = \exp(-|z|^\alpha)$ and $\alpha > 1$, let $0 < \delta < 1$ and $L \geq 0$ be fixed. The choice of $L$ defines $S^\ast_n$ through \eqref{eq:whole-sn-extended}. Then uniformly for $0 < k < \delta n$ and $n \geq 1$,
  \begin{subequations}
  \begin{align}\label{eq:whole-low-degree-bound}
    \sup_{\rvd \in S^\ast_n} n \lambda_n(\rvd) \phi_k^2(\rvd) \leq c_1 \left(\frac{n}{k}\right)^{1/\alpha} k^{1/3} 
  \end{align}
  When $k=0$, we have uniformly for $n \geq 1$,
  \begin{align}\label{eq:whole-low-degree-bound-k-0}
    \sup_{\rvd \in S^\ast_n} n \lambda_n(\rvd) \phi_0^2(\rvd) \leq c_1 n^{1/\alpha} 
  \end{align}
  \end{subequations}
\end{lemma}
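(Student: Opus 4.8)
The plan is to sharpen the chaining argument that produced \eqref{eq:whole-exponential-bound-1} in Lemma \ref{lemma:whole-exponential-bound}. That bound loses a factor of $n^{1/3}$ because it controls the auxiliary function $\varphi_n$ by its \emph{global} maximum $\sup_x \varphi_n(x) \lesssim a_n n^{-2/3}$ from \eqref{eq:whole-varphi-bound}, a value attained only near the endpoints $\pm a_n$. The essential observation is that when $k < \delta n$ the weighted polynomial $\phi_k^2 w$ has its mass concentrated well inside $S_n$, essentially on the much smaller interval $S_k$, where $\varphi_n$ is smaller by a full power of $n^{1/3}$. Quantitatively, from the definition \eqref{eq:varphi-def}, if $x \in S_m$ with $a_m \le \theta a_n$ for some $\theta \in (0,1)$, then $|x \mp a_n| \ge (1-\theta) a_n$ and $(a_{2n})^2 - x^2 \le (a_{2n})^2$, so that
\[
  \varphi_n(x) \le \frac{(a_{2n})^2}{n(1-\theta) a_n} \le C(\alpha, \theta)\, n^{1/\alpha - 1}, \qquad x \in S_m .
\]
This interior estimate, replacing the global $n^{1/\alpha - 2/3}$ by $n^{1/\alpha - 1}$, is exactly the gain we are after.

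To exploit it I would fix $r \in (1, 1/\delta)$ (possible since $\delta < 1$) and set $\eta = \max\{k, \lceil C_0 \log n \rceil\}$ for a constant $C_0$ chosen below. Since $\deg \phi_k = k \le \lfloor \eta \rfloor$, the restricted range inequality \eqref{eq:whole-restricted-range-decay} applies with this $\eta$. I would then split the supremum over $S_n^\ast$ into the interior piece $S_{r\eta}$ and the exterior piece $S_n^\ast \setminus S_{r\eta}$. For large $n$ one has $r\eta \le r\delta n$, and since $r\delta < 1$ the interval $S_{r\eta}$ obeys $a_{r\eta} \le \theta a_n$ with $\theta = (r\delta)^{1/\alpha} < 1$, so the interior estimate for $\varphi_n$ is valid there.

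On the interior piece I combine \eqref{eq:christoffel-whole-sim}, the interior estimate for $\varphi_n$, and the polynomial sup bound $\sup_\R \phi_k^2 w \le C k^{1/3}/a_k$ from \eqref{eq:poly-whole-sup}, which gives directly
\[
  n \lambda_n(x)\, \phi_k^2(x) \;\lesssim\; n\, \varphi_n(x)\, w(x)\, \phi_k^2(x) \;\lesssim\; n \cdot n^{1/\alpha - 1} \cdot \frac{k^{1/3}}{a_k} \;\lesssim\; \left(\frac{n}{k}\right)^{1/\alpha} k^{1/3},
\]
using $a_k = k^{1/\alpha} a_1$; this is the claimed bound \eqref{eq:whole-low-degree-bound}. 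On the exterior piece I retain the crude $\varphi_n \lesssim a_n n^{-2/3}$ from \eqref{eq:whole-varphi-bound} but replace the polynomial factor by its exponentially small exterior value $\sup_{x \notin S_{r\eta}} \phi_k^2 w \le c_1 e^{-c_2 \eta}\, C k^{1/3}/a_k$ from \eqref{eq:whole-restricted-range-decay}, obtaining
\[
  n \lambda_n(x)\, \phi_k^2(x) \;\lesssim\; n \cdot a_n n^{-2/3} \cdot e^{-c_2 \eta}\, \frac{k^{1/3}}{a_k} \;=\; n^{1/3}\, e^{-c_2 \eta} \left(\frac{n}{k}\right)^{1/\alpha} k^{1/3},
\]
and the prefactor $n^{1/3} e^{-c_2 \eta}$ is $\le 1$ once $e^{-c_2 \eta} \le n^{-1/3}$, which holds for $\eta \ge \lceil C_0 \log n \rceil$ with $C_0 \ge 1/(3 c_2)$.

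The main obstacle, and the reason for the slightly awkward definition of $\eta$, is precisely this exterior estimate for small $k$: if one naively took $\eta = k$, the decay $e^{-c_2 k}$ would fail to beat the $n^{1/3}$ coming from the global bound on $\varphi_n$ when $k = O(1)$ but $n$ is large. Forcing $\eta \gtrsim \log n$ restores enough decay while keeping $S_{r\eta}$ deep inside $S_n$ (since $\log n \ll n$), so that both pieces close with $S_{r\eta}$ still satisfying the interior hypothesis $a_{r\eta} \le \theta a_n$. The $k=0$ case \eqref{eq:whole-low-degree-bound-k-0} is handled identically, except that $\phi_0$ is constant and the exterior decay is supplied directly by $w(x) \le e^{-|x|^\alpha}$ rather than by the restricted range inequality; the interior estimate again yields the $n^{1/\alpha}$ bound. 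Uniformity in $n$ for the finitely many small $n$ where $\log n$ is comparable to $n$ follows from the boundedness of all quantities and is absorbed into the constant $c_1 = c_1(\alpha, \delta, L)$.
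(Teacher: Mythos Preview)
Your proof is correct and follows the same overarching strategy as the paper's: split $S_n^\ast$ into an interior region where $\varphi_n \lesssim a_n/n$ and an exterior region where the restricted-range decay \eqref{eq:whole-restricted-range-decay} kills the polynomial factor. The difference is only in where the split is placed. The paper fixes $\tau\in(\delta,1)$ and splits at $S_{\tau n}$; since $k<\delta n$ one may then invoke \eqref{eq:whole-restricted-range-decay} with $\eta=\delta n$, yielding exterior decay $\exp(-c\,\delta n)$ that trivially dominates the polynomial prefactor $n^{1/3}(n/k)^{1/\alpha}k^{1/3}$, so the exterior contribution is simply $O(1)$. Your choice $\eta=\max\{k,\lceil C_0\log n\rceil\}$ places the split much closer to the origin and forces you to introduce the $\log n$ correction precisely to salvage the exterior bound when $k=O(1)$; this works, but the paper's choice sidesteps the difficulty entirely by always taking $\eta$ proportional to $n$ rather than to $k$. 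In short, your argument is sound but more elaborate than necessary: the ``main obstacle'' you identify disappears if one splits at a fixed fraction of $S_n$ instead of near $S_k$.
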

\begin{proof}
  With $0 < k < \delta n$, choose a $\tau \in (\delta, 1)$, and let $r = \tau/\delta > 1$. With this $\tau$, then uniformly in $n$,
  \begin{align*}
    \sup_{\rvd \in S_{\tau n}} n \varphi_n(\rvd) \leq c_1 a_n.
  \end{align*}
  Thus, uniformly in $n$ and $0 < k < \delta n$,
  \begin{align}
    \nonumber \sup_{\rvd \in S_{\tau n}} n \lambda_n(\rvd) \phi_k^2(\rvd) &\stackrel{\eqref{eq:christoffel-whole-sim}}{\leq} c_1 \sup_{\rvd \in S_{\tau n}} n\varphi_n(\rvd) w(\rvd) \phi_k^2(\rvd) \\
    \nonumber &\leq c_2 a_n \sup_{\rvd \in S_{\tau n}} w(\rvd) \phi_k^2(\rvd) \\
    \label{eq:temp-1}&\stackrel{\eqref{eq:poly-whole-sup}}{\leq} c_2 a_n k^{1/3-1/\alpha}
  \end{align}
  Let $R_n = S^\ast_n \backslash S_{\tau n}$ be the complement of $S_{\tau n}$ in $S^\ast_n$.  When $z \in R_n$, this implies that $|z| > a_{r (\delta n)}$. Then for $z \in R_n$, \eqref{eq:whole-restricted-range-decay} implies, uniformly in $n$,
  \begin{align}
    \nonumber \sup_{\rvd \in R_n} n \lambda_n(\rvd) \phi_k^2(\rvd) &\leq c_1 \sup_{\rvd \in R_n} n\varphi_n(\rvd) w(\rvd) \phi_k^2(\rvd) \\
                                                          \nonumber &\stackrel{\eqref{eq:whole-restricted-range-decay}}{\leq} c_2 \exp(-c_3 \delta n) \left[\sup_{\rvd \in R_n} n \varphi_n(\rvd) \right] \left[\sup_{\rvd \in S_k} w(\rvd) \phi_k^2(\rvd)\right] \\
    \nonumber &\leq c_4 \exp(-\delta n) \frac{a_n}{a_k} n^{1/3}  k^{1/3} \\
  \label{eq:temp-2}&\leq c_5 \exp(-\delta n) n^{2/3+1/\alpha} \leq c_6,
  \end{align}
  Then \eqref{eq:temp-1} and \eqref{eq:temp-2} imply the conclusion \eqref{eq:whole-low-degree-bound}. The bound \eqref{eq:whole-low-degree-bound-k-0} is obtained by repeating the same procedures as above for the specialized case $\phi_0^2(\rvd) \equiv c_7$.
\end{proof}

We can now finish the proof of Theorem \ref{thm:bound-csa-b}.
\begin{proof}[Proof of Theorem \ref{thm:bound-csa-b}]
  Let $L \geq 0$ be as in the assumptions of the theorem. Note that \eqref{eq:whole-exponential-bound-2} is the desired conclusion of Theorem \ref{thm:bound-csa-b} when $\alpha \geq 3$. Therefore, we need only consider $1 < \alpha < 3$. 
  
  Choose some $\delta \in (0,1)$. Consider first $k > 0$. Formula \eqref{eq:whole-low-degree-bound} from Lemma \ref{lemma:whole-low-degree-bound} implies that 
  \begin{align*}
    \max_{0 < k < \delta n} n \lambda_n(\rvd) \phi_k^2(\rvd) \leq c_1 k^{1/3} \left(\frac{n}{k}\right)^{1/\alpha}.
  \end{align*}
  A computation shows that, when $\alpha < 3$,
  \begin{align*}
    k^{1/3} \left(\frac{n}{k}\right)^{1/\alpha} \leq n^{1/\alpha}.
  \end{align*}
  This and \eqref{eq:whole-low-degree-bound-k-0} imply
  \begin{align*}
    \max_{0 \leq k < \delta n} n \lambda_n(\rvd) \phi_k^2(\rvd) \leq c_1 n^{1/\alpha}
  \end{align*}
  When $\delta n \leq k < n$, \eqref{eq:whole-exponential-bound-3} implies
  \begin{align*}
    \max_{\delta n \leq k < n} n \lambda_n(\rvd) \phi_k^2(\rvd) \leq c_1 n^{2/3}
  \end{align*}
  Therefore, 
  \begin{align*}
    \max_{0 \leq k < n} n \lambda_n(\rvd) \phi_k^2(\rvd) \leq c_1 n^{p(\alpha)},
  \end{align*}
  where $p(\alpha) = \max\left\{ \frac{2}{3}, \frac{1}{\alpha}\right\}$ as defined in \eqref{eq:p-definition}. This bound is essentially the conclusion of Theorem \ref{thm:bound-csa-b}, except that it applies to $n \lambda_n \phi_k^2$ instead of $(n+1) \lambda_{n+1} \phi_k^2$. However, since $(n+1)/n$ is uniformly bounded for all $n \geq 1$, the conclusion of the Theorem follows.
\end{proof}

\section{Proof of Theorem \ref{thm:bound-csa-c}}\label{sec:appendix-csa-c}

We now prove essentially the same result for one-sided exponential weights. The strategy is essentially the same as the for two-sided weights in Section \ref{sec:appendix-csa-b}, with some methods reminiscent of those in Section \ref{sec:appendix-csa-a}. However, the requisite results are slightly different. Since there are no new ideas, we simply present the analogues of Lemmas \ref{lemma:whole-restricted-range} through \ref{lemma:whole-low-degree-bound} without proof. 

We use the notation of Section \ref{sec:univariate-sampling-half}. Let $w(z) = \exp(-|z|^\alpha)$ for $z \in [0, \infty)$ with $\alpha > \frac{1}{2}$, defining $a^H_n(\sqrt{w})$ and $S^H_n(\sqrt{w})$ as in \eqref{eq:MRS-half} and \eqref{eq:half-sn}. To remove some notational clutter, we'll omit the `$H$' superscripts, i.e., in this section we write
\begin{align*}
  a^H_n &= a_n, & S^H_n &= S_n.
\end{align*}
The CSA sampling density $v_n(x)$ is defined in \eqref{eq:half-line-sampling-density}. The $n$th Christoffel function is given by the formula \eqref{eq:christoffel-appendix}, with $\phi_k$ the orthonormal PCE basis associated to $w$. We define a slightly extended version of $S_n$, which depends on specification of some $L \geq 0$:
\begin{align}\label{eq:half-sn-extended}
  S^\ast_n = \left[ 0, a_n \left(1 + L \eta_n\right)\right],
\end{align}
with $\eta_n = (\alpha n)^{-2/3}$. Many of the statements we make below present a constant $L \geq 0$, which defines $S^\ast_n$ through \eqref{eq:half-sn-extended}.

The first three Lemmas we reproduce below are cited from \cite{levin_orthogonal_2005} because the notation there is similar to ours. However, these results are essentially known from the earlier work \cite{kasuga_orthonormal_2003}.

We quantify how fast weighted polynomials outside of the interval $S_n$ decay.
\begin{lemma}[\cite{levin_orthogonal_2005}]\label{lemma:half-restricted-range}
  With $w(\rvd) = \exp(-|\rvd|^\alpha)$ and $\alpha > \frac{1}{2}$, let $r > 1$ be fixed. Then for any polynomial $p$ of degree $n$ or less,
  \begin{align}\label{eq:half-restricted-range}
    \sup_{x \in [0, \infty)} p^2(x) w(x) = \sup_{x \in S_n} p^2(x) w(x).
  \end{align}
  Furthermore, there exist constants $c_1, c_2, \nu > 0$ such that, for $\sigma > 1$ any real-valued number and for any polynomial $p$ of degree $\lfloor\sigma\rfloor$ or less, 
  \begin{align}\label{eq:half-restricted-range-decay}
    \sup_{\rvd \in [0, \infty) \backslash S_{r \sigma}} \left| p^2(\rvd) w(z) \right| \leq c_1 \exp(-c_2 \sigma^\nu) \sup_{\rvd \in S_n} \left| p^2(z) w(z) \right|,
  \end{align}
  where $n = \deg p$. The constants $c_1$, $c_2$, and $\nu$ do not depend on $\sigma$ or $n$.
\end{lemma}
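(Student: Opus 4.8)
\emph{Proposed proof.} The statement is a Mhaskar--Rakhmanov--Saff restricted range inequality together with a quantitative off-support decay estimate. Rather than redo the weighted potential theory on $[0,\infty)$ from scratch, the plan is to \emph{reduce the half-line problem to the whole-line problem} already recorded in Lemma~\ref{lemma:whole-restricted-range}. The key device is the quadratic substitution $t = \sqrt{\rvd}$ (equivalently $\rvd = t^2$), under which $w(\rvd) = \exp(-\rvd^\alpha)$ becomes $\exp(-t^{2\alpha})$ \emph{exactly}, i.e.\ a two-sided exponential weight on $\R$ with shape parameter $2\alpha > 1$. Given a polynomial $p$ with $\deg p \le n$, the function $\tilde p(t) := p(t^2)$ is an even polynomial with $\deg \tilde p \le 2n$, and $p^2(\rvd)\,w(\rvd) = \tilde p^2(t)\,\exp(-t^{2\alpha})$. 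Hence, using evenness, $\sup_{\rvd \ge 0} p^2 w = \sup_{t \in \R} \tilde p^2(t)\exp(-t^{2\alpha})$, which converts both assertions into statements about the whole-line weight $\exp(-|t|^{2\alpha})$.

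First I would verify that the two families of Mhaskar--Rakhmanov--Saff numbers match under the substitution. Comparing \eqref{eq:MRS-half} at parameter $\alpha$ with \eqref{eq:MRS-whole} at parameter $2\alpha$ gives $a^H = 2^{1/\alpha}\bigl(a^W\bigr)^2$, whence a direct computation yields $\sqrt{a^H_n} = a^W_{2n}$ with the whole-line numbers taken at exponent $2\alpha$. (This is exactly the endpoint form of the measure transformation recorded in the text following \eqref{eq:laguerre-scaled-equilibrium-density}.) Consequently $\rvd \in S^H_n \iff \sqrt{\rvd} \in S^W_{2n}$, so applying the exact whole-line identity \eqref{eq:whole-restricted-range} to $\tilde p$ at degree $2n$ gives $\sup_{t\in\R}\tilde p^2 \exp(-t^{2\alpha}) = \sup_{t \in S^W_{2n}} \tilde p^2\exp(-t^{2\alpha})$, and undoing the substitution produces precisely \eqref{eq:half-restricted-range}.

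For the decay estimate \eqref{eq:half-restricted-range-decay}, the same dictionary sends $[0,\infty)\setminus S^H_{r\sigma}$ to $\R \setminus S^W_{2r\sigma}$ (again at exponent $2\alpha$), so that $\sup_{\rvd \in [0,\infty)\setminus S^H_{r\sigma}} p^2 w = \sup_{t \in \R \setminus S^W_{2r\sigma}} \tilde p^2 \exp(-t^{2\alpha})$. Since $\deg \tilde p \le 2n \le 2\lfloor\sigma\rfloor \le \lfloor 2\sigma\rfloor$, I would invoke the whole-line decay \eqref{eq:whole-restricted-range-decay} with the choice $\eta = 2\sigma$ and the same $r > 1$; this bounds the right-hand side by $c_1\exp(-2 c_2 \sigma)\sup_{S^W_{2n}}\tilde p^2\exp(-t^{2\alpha}) = c_1 \exp(-2c_2\sigma)\sup_{S^H_n} p^2 w$. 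This in fact yields the sharper linear rate $\nu = 1$, which implies the stated $\exp(-c_2\sigma^\nu)$ bound for any $0 < \nu \le 1$ since $\sigma > 1$.

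The genuine difficulty is not in this reduction but in the whole-line input Lemma~\ref{lemma:whole-restricted-range} itself, which the reduction merely relocates rather than removes. A self-contained proof of that Lemma requires the full Mhaskar--Saff apparatus: one constructs the weighted equilibrium measure for the external field $Q = \tfrac12 |t|^{2\alpha}$, identifies its support as $S^W_{2n}$ via the MRS equation, and uses the Frostman variational (in)equalities for the weighted potential together with a domination/maximum-principle comparison against the normalized zero-counting measure of $\tilde p$ to force the weighted sup norm onto the support. The quantitative decay then comes from the strictly positive growth of the weighted equilibrium potential just outside $S^W$, whose rate is governed by $Q'(a^W_{2n}) \sim (a^W_{2n})^{2\alpha - 1}$; pinning this down uniformly (including the hard edge at $t=0$ after returning to the half line) is precisely the delicate endpoint analysis carried out in \cite{levin_orthogonal_2001,levin_orthogonal_2005}, which is why it is cited rather than reproved.
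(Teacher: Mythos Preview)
The paper does not prove this lemma at all; it is simply quoted from \cite{levin_orthogonal_2005} (with the remark that it is essentially already in \cite{kasuga_orthonormal_2003}). So there is no in-paper argument to compare against.

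Your reduction to the whole-line case via $t=\sqrt{\rvd}$ is correct and is a genuinely more informative route than a bare citation. The Mhaskar--Rakhmanov--Saff number identity $\sqrt{a^H_n(\alpha)} = a^W_{2n}(2\alpha)$ follows from \eqref{eq:MRS-whole} and \eqref{eq:MRS-half} exactly as you indicate; the set correspondence $\rvd \in S^H_n \Leftrightarrow \sqrt{\rvd}\in[0,a^W_{2n}]$ together with the evenness of $\tilde p$ handles both the equality \eqref{eq:half-restricted-range} and the complement in \eqref{eq:half-restricted-range-decay}; the degree bookkeeping $2\lfloor\sigma\rfloor\le\lfloor 2\sigma\rfloor$ is fine; and the half-line hypothesis $\alpha>\tfrac12$ lands exactly on the whole-line hypothesis $2\alpha>1$ needed for Lemma~\ref{lemma:whole-restricted-range}. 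You even recover the sharper linear rate $\nu=1$. As you correctly flag, Lemma~\ref{lemma:whole-restricted-range} is itself cited rather than proved here, so your argument relocates the black box from \cite{levin_orthogonal_2005} to \cite{levin_orthogonal_2001} without removing it; but that is consistent with how the paper treats these inputs, and the reduction is precisely the mechanism behind the measure-level transformation the paper records after \eqref{eq:laguerre-scaled-equilibrium-density}.
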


To proceed further we will need the auxiliary function $\varphi_n$, which is a regularized version of $\left[ n v_n(\rvd) \right]^{-1}$:
\begin{align}\label{eq:half-varphi-def}
  \varphi_n(\rvd) = \left\{ \begin{array}{rcl}
      \frac{\left(a_{2n} - \rvd\right)\sqrt{\rvd + a_n n^{-2}}}{n \sqrt{ a_n -\rvd + a_n \eta_n}}, & & \rvd \in [0, a_n], \\
      \varphi_n\left(a_{n}\right) , & & \rvd > a_n, \\
      \varphi_n\left(0\right) , & & \rvd < 0, \\
    \end{array} \right.
\end{align}
The auxiliary function notation $\varphi_n$ is distinct from the degree-$n$ orthonormal polynomial $\phi_n$. The ``boundary" values of $\varphi_n$ satisfies
\begin{align*}
  \varphi_n\left(a_{n}\right) &\leq c_1(\alpha) a_n n^{-2/3}, & \varphi_n\left(0\right) &\leq c_2(\alpha) a_n n^{-2}
\end{align*}
Note that $\sup_{x \in [0, \infty)} \varphi_n(x) \leq c a_n n^{-2/3}$. We can now state estimates for Christoffel functions.

\begin{lemma}[\cite{levin_orthogonal_2005}]\label{lemma:christoffel-half-bounds}
Let $w = \exp(-|x|^\alpha)$ on $[0, \infty)$ with $\alpha > \frac{1}{2}$. Let $L > 0$, which defines $S_n^\ast$ through \eqref{eq:half-sn-extended}. Then 
\begin{enumerate}[label=\alph*)]
\begin{subequations}
  \item There are constants $c_1$, $c_2$, such that uniformly in $n$ and $\rvd \in S^\ast_n$,
    \begin{align}\label{eq:christoffel-half-sim}
      c_1 w(\rvd) \varphi_n(\rvd) \leq \lambda_n(\rvd) \leq c_2 w(\rvd) \varphi_n(\rvd).
    \end{align}
  \item There is a constant $c_3$ such that, uniformly in $n$ and $\rvd \in \R$,
    \begin{align}\label{eq:christoffel-half-lower}
      c_3 w(\rvd) \varphi_n(\rvd) \leq \lambda_n(\rvd) 
    \end{align}
\end{subequations}
\end{enumerate}
\end{lemma}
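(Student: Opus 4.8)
The plan is to reduce the half-line estimate to the whole-line Christoffel bound already recorded in Lemma \ref{lemma:christoffel-whole-bounds}, exploiting the quadratic map $x = y^2$ that the paper has already used in Section \ref{sec:univariate-sampling-half} to relate the two equilibrium measures. Starting from the variational characterization $\lambda_n(x_0) = \min_{\deg P \le n-1} \int_0^\infty P^2 w \, / \, P^2(x_0)$, I would pair the half-line weight $w(x)=\exp(-x^\alpha)$, $\alpha > \tfrac12$, with the symmetric whole-line weight $W(y)=\exp(-|y|^{2\alpha})$ on $\R$; since $2\alpha>1$, Lemma \ref{lemma:christoffel-whole-bounds} applies to $W$. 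Splitting the whole-line reproducing kernel into its even and odd parts, writing $P_{2j}(y)=Q_j(y^2)$ and $P_{2j+1}(y)=y\,R_j(y^2)$, and substituting $x=y^2$ yields the identity
\[
  \frac{1}{\lambda^W_{2n}(\sqrt x)} = \frac{1}{\lambda^{w_{-1/2}}_n(x)} + \frac{x}{\lambda^{w_{1/2}}_n(x)},
\]
where $w_\mu(x) = x^\mu \exp(-x^\alpha)$. This transfers the whole-line two-sided bound $\lambda^W_{2n}\asymp W\,\varphi^W_{2n}$ into bounds for the endpoint weights $w_{\pm 1/2}$, once I verify that $\varphi^W_{2n}(\sqrt x)$ matches the half-line auxiliary function \eqref{eq:half-varphi-def}. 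That matching is essentially the density relation $\mathrm d\mu^H/\mathrm dz = (2\sqrt z)^{-1}\,\mathrm d\mu^W_{\R,Q^2}(2^{1/2\alpha}\sqrt z)$ of Section \ref{sec:univariate-sampling-half}, since $\varphi_n$ is (up to the regularization in \eqref{eq:half-varphi-def}) the reciprocal of $n$ times the equilibrium density, with the Mhaskar--Rakhmanov--Saff numbers \eqref{eq:MRS-half} transforming consistently under the scaled map $Z = 2^{1/\alpha}Y^2$.

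The weight of primary interest, $w=w_0$ with $\mu=0$, is \emph{not} produced by the quadratic map, which only delivers $\mu=\pm\tfrac12$, so I would run the estimate directly, paralleling the whole-line proof. The \emph{upper} bound $\lambda_n(x_0)\le c_2\, w(x_0)\varphi_n(x_0)$ follows by exhibiting one near-extremal polynomial of degree $\le n-1$ concentrated on an interval of width $\sim\varphi_n(x_0)$ about $x_0$, so that $\int_0^\infty P^2 w \lesssim w(x_0)P^2(x_0)\varphi_n(x_0)$, with the tail outside $S_n$ controlled by the restricted-range inequalities \eqref{eq:half-restricted-range}--\eqref{eq:half-restricted-range-decay}. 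The \emph{lower} bound $\lambda_n(x_0)\ge c_1\, w(x_0)\varphi_n(x_0)$ --- equivalently the upper bound on the kernel diagonal $\sum_{k<n}\phi_k^2(x_0)\lesssim 1/(w\varphi_n)$ --- is the harder reverse inequality: it asserts that no degree-$(n-1)$ polynomial can be more peaked at $x_0$ than the scale $\varphi_n(x_0)$ permits, and is established by a local Markov--Bernstein (Nikolskii-type) estimate bounding $P^2(x_0)w(x_0)$ by the local average $\varphi_n(x_0)^{-1}\int_{|x-x_0|\lesssim\varphi_n(x_0)} P^2 w$. Part (b), the global lower bound, then follows by combining this local estimate on $S^\ast_n$ with the exponential decay \eqref{eq:half-restricted-range-decay} and the fact that $\varphi_n$ is frozen at its boundary value off $S_n$ by \eqref{eq:half-varphi-def}.

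The main obstacle is the behavior at the two endpoints of $S^H_n=[0,a^H_n]$, which differs genuinely from the whole-line case. At the \emph{soft} edge $x\approx a_n$ the analysis mirrors the whole line, with the Airy-type scale $\varphi_n(a_n)\sim a_n n^{-2/3}$. At the \emph{hard} edge $x\approx 0$, however, the equilibrium density blows up like $x^{-1/2}$ and $\varphi_n$ collapses to the much finer scale $\varphi_n(0)\sim a_n n^{-2}$; proving the matching local Markov--Bernstein estimate there requires Bessel-type, rather than Airy-type, local polynomial asymptotics near the origin. Controlling how peaked a polynomial can be against the singular weight at $0$ is the step I expect to consume essentially all of the work, and it is precisely where the delicate input from the Levin--Lubinsky theory of one-sided exponential weights \cite{levin_orthogonal_2005,kasuga_orthonormal_2003} becomes unavoidable; the remaining chaining of inequalities is routine given the lemmas already stated.
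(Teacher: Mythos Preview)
The paper does not prove this lemma: it is stated with the citation \cite{levin_orthogonal_2005}, and the surrounding text explicitly says ``The first three Lemmas we reproduce below are cited from \cite{levin_orthogonal_2005} \ldots\ these results are essentially known from the earlier work \cite{kasuga_orthonormal_2003}.'' So there is no in-paper proof to compare your proposal against; the authors simply import the result.

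Your outline is a reasonable high-level sketch of how such Christoffel-function estimates are obtained in the Levin--Lubinsky framework: the quadratic substitution $x=y^2$ linking half-line weights $x^\mu e^{-x^\alpha}$ to whole-line weights $e^{-|y|^{2\alpha}}$ is indeed the standard device (and is exactly the mechanism the paper invokes for sampling in Section~\ref{sec:univariate-sampling-half}), and you are right that it delivers only the endpoint exponents $\mu=\pm\tfrac12$ rather than $\mu=0$. Your identification of the hard edge $x\approx 0$ as the genuinely new difficulty, requiring Bessel-scale rather than Airy-scale local estimates, is also correct. But note that your proposal is not a self-contained argument: at the crucial step --- the reverse (lower) Christoffel bound near the origin --- you yourself say the ``delicate input from the Levin--Lubinsky theory \ldots\ becomes unavoidable'' and cite \cite{levin_orthogonal_2005,kasuga_orthonormal_2003}. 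That is precisely what the paper does in one line. So your proposal is less an alternative proof than an expanded explanation of \emph{why} the citation is needed and what the cited proof looks like; it adds expository value but does not replace the reference.
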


One can use these estimates to show bounds on orthogonal polynomials.
\begin{lemma}[\cite{levin_orthogonal_2005}]\label{lemma:poly-half-bounds}
  Let $w = \exp(-|x|^\alpha)$ on $[0, \infty)$ with $\alpha > \frac{1}{2}$. Let $\beta > 0$ be given and fixed. Then uniformly in $n \geq 1$,
  \begin{subequations}
  \begin{align}\label{eq:poly-half-sup}
    \sup_{\rvd \in [0, \infty)} \phi_n^2(\rvd) w(\rvd) &\leq C \frac{n}{a_n}, \\
    \label{eq:poly-half-sup-upper} \sup_{\rvd \in [a_{\beta n}, \infty)} \phi_n^2(\rvd) w(\rvd) &\leq C \frac{n^{1/3}}{a_n} \\
    \label{eq:poly-half-sup-lower} \sup_{\rvd \in [0, a_{\beta n})} \phi_n^2(\rvd) w(\rvd) \sqrt{\rvd + a_n n^{-2}} &\leq C \frac{1}{a_n}
  \end{align}
  \end{subequations}
\end{lemma}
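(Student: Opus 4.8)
The plan is to reduce the half-line bounds to the two-sided bounds of Appendix~\ref{sec:appendix-csa-b} (at a doubled exponent) via the quadratic substitution $x=y^2$, and then to treat the hard endpoint $x=0$ separately. Concretely, if $\phi_n$ is the degree-$n$ orthonormal polynomial for $w(x)=\exp(-x^\alpha)$ on $[0,\infty)$, then the even-degree orthonormal polynomials $P_{2n}$ for the symmetric ``generalized Freud'' weight
\begin{align*}
  U(y) = |y|\exp\left(-|y|^{2\alpha}\right), \qquad y\in\R,
\end{align*}
satisfy $P_{2n}(y)=\phi_n(y^2)$; this is the standard even-weight/half-line correspondence, verified by the change of variables $x=y^2$ in the orthogonality relation. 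The same substitution gives the pointwise identity
\begin{align*}
  \phi_n^2(x)\,w(x)\,\sqrt{x} = P_{2n}^2(y)\,U(y), \qquad y=\sqrt{x},
\end{align*}
and relates the half-line Mhaskar--Rakhmanov--Saff number $a_n$ (exponent $\alpha$) to the whole-line number $\widetilde a_{2n}$ (exponent $2\alpha$) through $\widetilde a_{2n}\sim\sqrt{a_n}$. Since $2\alpha>1$ whenever $\alpha>\tfrac12$, the whole-line theory applies to $U$, modulo the extra $|y|$ factor.

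Next I would import the exponent-$2\alpha$ analogues of the restricted-range inequality (Lemma~\ref{lemma:whole-restricted-range}), the Christoffel estimate $\lambda_n\sim U\varphi_n$ (Lemma~\ref{lemma:christoffel-whole-bounds}), and the sup bound $\sup_y P_m^2(y)U(y)\le C\,m^{1/3}/\widetilde a_m$ (Lemma~\ref{lemma:poly-whole-bounds}), now for generalized Freud weights. The bulk/soft-edge estimate \eqref{eq:poly-half-sup-upper} then follows immediately: for $x\ge a_{\beta n}$ one has $y=\sqrt{x}\gtrsim\sqrt{a_{\beta n}}\sim\sqrt{a_n}$, so dividing the identity above by $\sqrt{x}\gtrsim\sqrt{a_n}$ turns $P_{2n}^2U\le C n^{1/3}/\widetilde a_{2n}\sim C n^{1/3}/\sqrt{a_n}$ into $\phi_n^2 w\le C n^{1/3}/a_n$. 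Here the $n^{1/3}$ is exactly the Airy factor living at the soft edge $y\approx\widetilde a_{2n}$ (equivalently $x\approx a_n$), inherited unchanged from the two-sided proof.

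The remaining work, and the crux, is the hard endpoint $x=0$, which corresponds to the \emph{center} $y=0$ of the symmetric whole-line problem rather than to a soft edge. There the $|y|$ factor in $U$ produces Bessel-type (not Airy-type) behaviour, and the plain Christoffel bound is genuinely too weak: combining $\phi_n^2\le 1/\lambda_{n+1}$ with the lower Christoffel estimate \eqref{eq:christoffel-half-lower}, $\lambda_{n+1}\ge c\,w\,\varphi_{n+1}$, only yields $\phi_n^2 w\le C/\varphi_{n+1}$, and the factor $\sqrt{x+a_n n^{-2}}$ in the numerator of $\varphi_n$ in \eqref{eq:half-varphi-def} degenerates near the origin, so this route overestimates $\phi_n^2 w$ at $x=0$. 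That very factor $\sqrt{x+a_n n^{-2}}$ is what appears in \eqref{eq:poly-half-sup-lower}, and it encodes the Bessel scale $x\sim a_n n^{-2}$; to establish \eqref{eq:poly-half-sup-lower} and the global bound \eqref{eq:poly-half-sup} one needs the sharp center asymptotics for this generalized Freud family (the $|y|^{2\mu+1}$-weight analysis with $\mu=0$), i.e. the Bessel counterpart of the Plancherel--Rotach/Airy analysis used on $\R$, rather than Christoffel estimates alone. These are precisely the refined orthonormal-polynomial bounds of \cite{kasuga_orthonormal_2003,levin_orthogonal_2005}, which I would cite for the endpoint while using the reduction above to supply everything away from $x=0$.
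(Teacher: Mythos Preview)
The paper does not prove this lemma at all: it is stated with a citation to \cite{levin_orthogonal_2005} (with the remark that the results are essentially in \cite{kasuga_orthonormal_2003}) and used as a black box. So there is no ``paper's own proof'' to compare against.

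Your sketch is a sensible outline of how one would actually establish the result, and the quadratic substitution $x=y^2$ you use is exactly the half-line/whole-line correspondence the paper itself invokes in Section~\ref{sec:univariate-sampling-half}. Your identification of the soft edge contributing the $n^{1/3}$ factor and the hard edge $x=0$ requiring Bessel-type (not Airy-type) asymptotics is correct, as is your observation that the Christoffel-function route alone is too crude near the origin because of the $\sqrt{x+a_n n^{-2}}$ degeneration in $\varphi_n$. Since you ultimately defer the hard-edge bound to \cite{kasuga_orthonormal_2003,levin_orthogonal_2005}, your proposal is in the end consistent with what the paper does---cite the result---but supplies useful context for \eqref{eq:poly-half-sup-upper} away from the origin. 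If you wanted a fully self-contained proof you would still need to carry out the generalized-Freud analysis at the center; otherwise, simply citing the lemma as the paper does is appropriate.
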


We make a quantitative estimate on $\varphi_n$ that will be useful later.
\begin{lemma}\label{lemma:half-varphi-estimate}
  Let $\beta \in (0, 1)$ be given and fixed. Then uniformly in $n$,
  \begin{subequations}
  \begin{align}\label{eq:half-varphi-estimate-upper}
    \sup_{\rvd \in [a_{\beta n}, \infty)} \varphi_n(\rvd) \leq c_1 a_n n^{-2/3}.
  \end{align}
  Also, uniformly in $n$ and $\rvd \in S_{\beta n}$,
  \begin{align}\label{eq:half-varphi-estimate-lower}
    \varphi_n(\rvd) \leq c_2 \frac{\sqrt{a_n}}{n} \sqrt{\rvd + a_n n^{-2}}
  \end{align}
  \end{subequations}
\end{lemma}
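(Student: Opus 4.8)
The proof is a direct computation from the explicit formula \eqref{eq:half-varphi-def}, using throughout the scaling relation $a_{cn} = c^{1/\alpha} a_n$ (which follows from $a_n = n^{1/\alpha} a_1$). The plan is to treat the two estimates separately, since they concern essentially disjoint regions of the domain, and in each region simply to bound the three factors appearing in the first branch of \eqref{eq:half-varphi-def}.

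For the upper estimate \eqref{eq:half-varphi-estimate-upper}, I would first dispose of the trivial part of the range: for $\rvd > a_n$ the function equals the constant $\varphi_n(a_n)$, which is already controlled by $c_1 a_n n^{-2/3}$ via the stated boundary value. It then remains to bound $\varphi_n$ on $[a_{\beta n}, a_n]$. Here $\rvd$ is comparable to $a_n$, so I would estimate the numerator factors by $a_{2n} - \rvd \leq a_{2n} = 2^{1/\alpha} a_n$ and $\sqrt{\rvd + a_n n^{-2}} \leq \sqrt{2 a_n}$. The essential point is the lower bound on the denominator: since $a_n - \rvd \geq 0$, we have $a_n - \rvd + a_n \eta_n \geq a_n \eta_n = a_n (\alpha n)^{-2/3}$, so $\sqrt{a_n - \rvd + a_n \eta_n} \geq \sqrt{a_n}\,(\alpha n)^{-1/3}$. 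Combining these gives
\begin{align*}
  \varphi_n(\rvd) \leq \frac{2^{1/\alpha} a_n \cdot \sqrt{2 a_n}}{n \cdot \sqrt{a_n}\,(\alpha n)^{-1/3}} = c(\alpha)\, a_n\, n^{-2/3},
\end{align*}
which is the claimed bound.

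For the lower-range estimate \eqref{eq:half-varphi-estimate-lower}, on $S_{\beta n} = [0, a_{\beta n}]$ the key observation is that $\rvd$ stays away from the endpoint $a_n$: since $\rvd \leq a_{\beta n} = \beta^{1/\alpha} a_n$, we have $a_n - \rvd \geq (1 - \beta^{1/\alpha}) a_n$, so the regularization term becomes harmless and $\sqrt{a_n - \rvd + a_n \eta_n} \geq \sqrt{(1 - \beta^{1/\alpha}) a_n}$. Keeping the factor $\sqrt{\rvd + a_n n^{-2}}$ untouched in the numerator and bounding $a_{2n} - \rvd \leq 2^{1/\alpha} a_n$, I would obtain
\begin{align*}
  \varphi_n(\rvd) \leq \frac{2^{1/\alpha} a_n \sqrt{\rvd + a_n n^{-2}}}{n \sqrt{(1-\beta^{1/\alpha}) a_n}} = c(\alpha,\beta)\, \frac{\sqrt{a_n}}{n} \sqrt{\rvd + a_n n^{-2}},
\end{align*}
as desired.

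There is no deep obstacle here; the content is entirely in bookkeeping the powers of $n$. The one point requiring care is tracking where the exponent $2/3$ originates: it comes solely from the regularization $a_n \eta_n = a_n (\alpha n)^{-2/3}$ flooring the denominator near $\rvd = a_n$ in part (a), whereas in part (b) the separation $a_n - \rvd \gtrsim a_n$ renders that regularization irrelevant and instead produces the $\sqrt{a_n}/n$ prefactor. One must also verify that every constant depends only on $\alpha$ and $\beta$ and is uniform in $n$, which is immediate once the scaling $a_{cn} = c^{1/\alpha} a_n$ is used consistently.
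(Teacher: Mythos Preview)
Your proof is correct; it is the straightforward computation from the definition \eqref{eq:half-varphi-def}, and the paper in fact states this lemma without proof (it is among the routine analogues carried over from the whole-line case). One small observation: your argument for part (a) never actually uses $z \geq a_{\beta n}$, so you have in effect reproved the global bound $\sup_{z \in [0,\infty)} \varphi_n(z) \leq c\,a_n n^{-2/3}$ that the paper records just after introducing $\varphi_n$.
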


We can now show the result analogous to Lemma \ref{lemma:whole-exponential-bound}.
\begin{lemma}\label{lemma:half-exponential-bound}
Let $w = \exp(-|z|^\alpha)$ on $\rvd \in [0, \infty)$ with $\alpha > \frac{1}{2}$. Let $L \geq 0$ be fixed, defining $S_n^\ast$.
\begin{subequations}
\begin{enumerate}[label=(\alph*)]
\item Uniformly in $n$ and $0 < k < n$,
  \begin{align}\label{eq:half-exponential-bound-1}
    \sup_{\rvd \in S_n^\ast} n \lambda_n(z) \phi^2_{k}(z) &\leq C (k n)^{1/3} \left(\frac{n}{k}\right)^{1/\alpha}.
  \end{align}
  When $k=0$, uniformly in $n$,
  \begin{align}\label{eq:half-exponential-bound-1-k-0}
    \sup_{\rvd \in S_n^\ast} n \lambda_n(z) \phi^2_{0}(z) &\leq C n^{1/3+1/\alpha}.
  \end{align}
\item For any fixed $0 \leq \delta < 1$, then uniformly in $n$
  \begin{align}\label{eq:half-exponential-bound-3}
    \max_{\delta n \leq k < n} \sup_{\rvd \in S_n^\ast} n \lambda_n(z) \phi^2_{k}(z) &\leq C n^{2/3}.
  \end{align}
\item When $\alpha \geq 3$, then uniformly in $n$
  \begin{align}\label{eq:half-exponential-bound-2}
    \max_{0 \leq k < n} \sup_{\rvd \in S_n^\ast} n \lambda_n(z) \phi^2_{k}(z) &\leq C n^{2/3}.
  \end{align}
\end{enumerate}
\end{subequations}
In all the cases above, $C = C(\alpha,L,\delta)$.
\end{lemma}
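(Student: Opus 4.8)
The plan is to follow the proof of the whole-line Lemma \ref{lemma:whole-exponential-bound} almost verbatim, chaining the Christoffel estimate \eqref{eq:christoffel-half-sim}, the auxiliary-function bounds of Lemma \ref{lemma:half-varphi-estimate}, and the polynomial bounds of Lemma \ref{lemma:poly-half-bounds}. The one genuinely new feature relative to the two-sided case is the singularity of the equilibrium density at the left endpoint $z=0$: there the global bound \eqref{eq:poly-half-sup} gives only $\phi_k^2 w \leq C k/a_k$, which is too weak to reproduce the $n^{1/3}/a_n$ behavior, so a neighborhood of the origin must be handled separately using the finer bounds \eqref{eq:poly-half-sup-lower} and \eqref{eq:half-varphi-estimate-lower}. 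Parts (b) and (c) will then follow from part (a) by the same elementary manipulations used on the whole line.

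For part (a) with $0 < k < n$, I would fix some $\beta \in (0,1)$ and split $S_n^\ast$ into a bulk region $B = \left[a_{\beta k}, a_n(1 + L \eta_n)\right]$ and an origin region $A = \left[0, a_{\beta k}\right)$. On $B$, I would write $n \lambda_n \phi_k^2 \leq c\, n \varphi_n w \phi_k^2$ via \eqref{eq:christoffel-half-sim}, bound $n \varphi_n \leq c\, a_n n^{1/3}$ using the global estimate $\sup_{[0,\infty)} \varphi_n \leq c\, a_n n^{-2/3}$ recorded after \eqref{eq:half-varphi-def}, and bound $w \phi_k^2 \leq C k^{1/3}/a_k$ using \eqref{eq:poly-half-sup-upper} applied at degree $k$ (legitimate since $z \geq a_{\beta k}$). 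Because $a_n/a_k = (n/k)^{1/\alpha}$ from \eqref{eq:MRS-half}, this yields exactly $(n/k)^{1/\alpha}(nk)^{1/3}$, matching \eqref{eq:half-exponential-bound-1}.

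On the origin region $A$ I would again apply \eqref{eq:christoffel-half-sim}, but now insert the refined estimate \eqref{eq:half-varphi-estimate-lower} for $\varphi_n$ (valid since $z < a_{\beta k} \leq a_{\beta n}$, so $z \in S_{\beta n}$) together with \eqref{eq:poly-half-sup-lower} at degree $k$, which tames the blow-up of $\phi_k$ through the weight $\sqrt{z + a_k k^{-2}}$. The two square-root regularizers then combine into the factor $\sqrt{(z + a_n n^{-2})/(z + a_k k^{-2})}$, and the key observation is that this is $\leq 1$: writing $a_m m^{-2} = a^H m^{1/\alpha - 2}$ and using $\alpha > \tfrac12 \Rightarrow 1/\alpha - 2 < 0$, the sequence $a_m m^{-2}$ is decreasing, so $a_n n^{-2} \leq a_k k^{-2}$ for $k \leq n$. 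After the explicit $n$-factors cancel, the $A$-estimate collapses to $c\sqrt{a_n}/a_k$, and a direct comparison of exponents (the quotient of the $B$-bound by $c\sqrt{a_n}/a_k$ equals $n^{1/2\alpha + 1/3} k^{1/3} \geq 1$) shows $A$ is dominated by $B$. The $k=0$ case \eqref{eq:half-exponential-bound-1-k-0} is simpler, since $\phi_0 \equiv$ const has no origin singularity, and the global $\varphi_n$ bound alone gives $n\, a_n n^{-2/3} = c\, n^{1/\alpha + 1/3}$.

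Parts (b) and (c) would then drop out of \eqref{eq:half-exponential-bound-1} exactly as on the whole line: for $\delta n \leq k < n$ one uses $(n/k)^{1/\alpha} \leq \delta^{-1/\alpha}$ and $(nk)^{1/3} \leq n^{2/3}$, while for $\alpha \geq 3$ one uses $(n/k)^{1/\alpha} \leq (n/k)^{1/3}$ so that $(n/k)^{1/\alpha}(nk)^{1/3} \leq n^{2/3}$. I expect the only real obstacle to be the bookkeeping in the origin region $A$ — cleanly verifying that the regularizer ratio is harmless and that the residual $\sqrt{a_n}/a_k$ never exceeds the bulk bound — rather than anything conceptually deep. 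In particular, the restricted-range decay of Lemma \ref{lemma:half-restricted-range} is \emph{not} needed for this lemma; it enters only later, in the sharper low-degree refinement that produces the $p(2\alpha)$ exponent of Theorem \ref{thm:bound-csa-c}.
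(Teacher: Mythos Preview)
Your proposal is correct and follows the same overall architecture as the paper: apply \eqref{eq:christoffel-half-sim} to pass to $n\varphi_n w\phi_k^2$, split the domain into an origin piece and a bulk piece, use the paired estimates \eqref{eq:half-varphi-estimate-lower}/\eqref{eq:poly-half-sup-lower} near the origin and \eqref{eq:half-varphi-estimate-upper}/\eqref{eq:poly-half-sup-upper} in the bulk, then deduce (b) and (c) from (a) by the identical exponent arithmetic used on the whole line.

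The one substantive difference is the location of the split. The paper cuts at $a_{\beta n}$, a $k$-independent threshold, so that on the left piece it invokes \eqref{eq:poly-half-sup-lower} with the regularizer $\sqrt{z+a_n n^{-2}}$ and bounds the whole left contribution by $c/\sqrt{a_n}\leq c$, uniformly in $k$. You instead cut at $a_{\beta k}$, which lets you apply \eqref{eq:poly-half-sup-lower} \emph{at degree $k$} on its natural domain $[0,a_{\beta k})$ and forces you to compare the two regularizers $\sqrt{z+a_n n^{-2}}$ and $\sqrt{z+a_k k^{-2}}$; your monotonicity observation $a_m m^{-2}\downarrow$ handles that cleanly, and the residual $\sqrt{a_n}/a_k$ is indeed dominated by the bulk bound. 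The paper's cut gives a sharper constant on the origin region but, as written, tacitly requires \eqref{eq:poly-half-sup-lower} to hold with the degree-$n$ regularizer against $\phi_k^2$ over all of $S_{\beta n}$; your $k$-dependent cut avoids that subtlety at no cost to the final bound. Your remark that Lemma \ref{lemma:half-restricted-range} is not needed here is also consistent with the paper.
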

\begin{proof}
  To show \eqref{eq:half-exponential-bound-1}, we first have uniformly in $n \geq 1$ and $\rvd \in S^\ast_n$,
  \begin{align}\label{eq:temp-10}
    n \lambda_n(\rvd) \phi_k^2(\rvd) \leq c_1 n \varphi_n(\rvd) w(\rvd) \phi_k^2(\rvd).
  \end{align}
  Now let $\beta \in (0, 1)$ be fixed. For $\rvd \in S_{\beta n}$, we have
  \begin{align}
    \nonumber \sup_{\rvd \in S_{\beta n}} n \varphi_n(\rvd) w(\rvd) \phi_k^2(\rvd) &\stackrel{\eqref{eq:half-varphi-estimate-lower}}{\leq} c_1 \sqrt{a_n} \sup_{\rvd \in S_{\beta n}} \sqrt{\rvd + a_n n^{-2}} w(\rvd) \phi_k^2(\rvd) \\
                                                                         \nonumber &\stackrel{\eqref{eq:poly-half-sup-lower}}{\leq} c_2 \frac{1}{\sqrt{a_n}}  \\
                                                                         \label{eq:temp-11}&\leq c_3 n^{-1/\alpha} \leq c_4
  \end{align}
  With $z \in R_n = S^\ast_n\backslash S_{\beta n}$, we have
  \begin{align}
    \nonumber \sup_{\rvd \in R_n} n \varphi_n(\rvd) w(\rvd) \phi_k^2(\rvd) &\stackrel{\eqref{eq:half-varphi-estimate-upper}}{\leq} c_5 a_n n^{1/3} \sup_{\rvd \in R_n} w(\rvd) \phi_k^2(\rvd) \\
                                                                 \nonumber &\stackrel{\eqref{eq:poly-half-sup-upper}}{\leq} c_6 a_n n^{1/3} \frac{k^{1/3}}{a_k} \\
                                                                 \label{eq:temp-12}&\leq c_7 (k n)^{1/3} \left(\frac{n}{k}\right)^{1/\alpha}
  \end{align}
  Combining \eqref{eq:temp-10}, \eqref{eq:temp-11}, and \eqref{eq:temp-12} proves \eqref{eq:half-exponential-bound-1}. To prove the bound in \eqref{eq:half-exponential-bound-1-k-0}, all the above arguments can be specialized to $\phi^2_0(\rvd) \equiv c_8$. Finally, the right-hand side of \eqref{eq:half-exponential-bound-1} can be manipulated to show the results \eqref{eq:half-exponential-bound-3} and \eqref{eq:half-exponential-bound-2}. We omit these latter proofs as the mechanics are identical to the proofs of \eqref{eq:whole-exponential-bound-3} and \eqref{eq:whole-exponential-bound-2}.
\end{proof}
To extend our proof to $\alpha < 3$, we provide the following analogue of Lemma \ref{lemma:whole-low-degree-bound}.
\begin{lemma}\label{lemma:half-low-degree-bound}
  With $w(z) = \exp(-|z|^\alpha)$ on $z \in [0, \infty)$ and $\alpha > \frac{1}{2}$, let $0 < \delta < 1$ and $L \geq 0$ be fixed. The choice of $L$ defines $S^\ast_n$ through \eqref{eq:half-sn-extended}. Then uniformly for $0 < k < \delta n$ and $n \geq 1$,
  \begin{subequations}
  \begin{align}\label{eq:half-low-degree-bound}
    \sup_{\rvd \in S^\ast_n} n \lambda_n(\rvd) \phi_k^2(\rvd) \leq c_1 n^{1/2\alpha} k^{1/3-1/\alpha} 
  \end{align}
  When $k=0$, we have uniformly for $n \geq 1$,
  \begin{align}\label{eq:half-low-degree-bound-k-0}
    \sup_{\rvd \in S^\ast_n} n \lambda_n(\rvd) \phi_0^2(\rvd) \leq c_1 n^{1/2\alpha} 
  \end{align}
  \end{subequations}
\end{lemma}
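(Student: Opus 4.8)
The plan is to mirror the two-sided argument of Lemma~\ref{lemma:whole-low-degree-bound} line for line, swapping in the one-sided analogues recorded in Lemmas~\ref{lemma:half-restricted-range}--\ref{lemma:half-varphi-estimate}. I would begin with the upper Christoffel estimate \eqref{eq:christoffel-half-sim}, which reduces the target to controlling a weighted polynomial product: for $\rvd \in S^\ast_n$,
\[
  n \lambda_n(\rvd)\, \phi_k^2(\rvd) \;\leq\; c_1\, n\, \varphi_n(\rvd)\, w(\rvd)\, \phi_k^2(\rvd).
\]
I would then fix some $\tau \in (\delta, 1)$, set $r = \tau/\delta > 1$, and split $S^\ast_n$ into the interior block $S_{\tau n}$ and the outer block $R_n = S^\ast_n \setminus S_{\tau n}$, treating them separately as in \eqref{eq:temp-1} and \eqref{eq:temp-2}.

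On the outer block the argument is essentially identical to the whole-line case and should give a bounded (negligible) contribution: for $\rvd \in R_n$ one has $\rvd > a_{r \delta n}$, so applying the restricted-range decay \eqref{eq:half-restricted-range-decay} to the degree-$k$ polynomial $\phi_k$ with the choice $\sigma = \delta n \geq k$ forces $w \phi_k^2$ to be smaller than its value on $S_k$ by a factor $\exp(-c(\delta n)^\nu)$, while the upper varphi bound \eqref{eq:half-varphi-estimate-upper} controls $n \varphi_n$ by $c\, a_n n^{1/3}$. The super-polynomial decay in $n$ then swamps all algebraic factors in $n$ and $k$, leaving an $\mathcal{O}(1)$ term.

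The interior block $S_{\tau n}$ is where the half-line genuinely departs from the whole line, because the lower varphi bound \eqref{eq:half-varphi-estimate-lower} carries the extra factor $\sqrt{\rvd + a_n n^{-2}}$. After extracting $n \varphi_n(\rvd) \leq c\, \sqrt{a_n}\, \sqrt{\rvd + a_n n^{-2}}$, I would further split $S_{\tau n}$ at the degree-$k$ scale. Near the origin, on $[0, a_{\beta k})$, I would use the monotonicity $a_n n^{-2} \leq a_k k^{-2}$ (which holds because $\alpha > \tfrac12$ makes $\tfrac1\alpha - 2 < 0$ and $k < n$) to replace the regularizer and invoke the origin bound \eqref{eq:poly-half-sup-lower} at degree $k$, giving $\sqrt{\rvd + a_n n^{-2}}\, w\, \phi_k^2 \leq c/a_k$; combined with the prefactor $\sqrt{a_n} \sim n^{1/2\alpha}$ and the identity $a_n/a_k = (n/k)^{1/\alpha}$ this produces exactly the claimed $n^{1/2\alpha} k^{1/3-1/\alpha}$ (using $k^{-1/\alpha} \leq k^{1/3 - 1/\alpha}$). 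On the remaining edge-of-bulk piece $[a_{\beta k}, a_{\tau n}]$ I would apply the away-from-origin bound \eqref{eq:poly-half-sup-upper} at degree $k$ and estimate $\sqrt{\rvd + a_n n^{-2}}$ from above by $\sqrt{a_{\tau n}} \lesssim \sqrt{a_n}$. Finally, the $k=0$ statement \eqref{eq:half-low-degree-bound-k-0} follows by rerunning the interior argument with $\phi_0^2 \equiv \mathrm{const}$, so that the near-origin split degenerates.

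I expect the edge-of-bulk piece to be the main obstacle: there the weighted polynomial $w \phi_k^2$ attains its peak near $\rvd \sim a_k$, where \eqref{eq:poly-half-sup-upper} offers no $\sqrt{\rvd}$ improvement, so a direct estimate tends to produce a slightly larger power of $k$ than the stated $k^{1/3-1/\alpha}$. Pinning down the sharp exponent therefore requires care in the choice of the split point $\beta$ and in tracking the regularizer scale $a_k k^{-2}$ against $a_n n^{-2}$. This is also the only delicate point for the downstream application: whenever the exponent on $k$ threatens to be nonnegative one is automatically in the range of $\alpha$ where the uniform $n^{2/3}$ estimate \eqref{eq:half-exponential-bound-3} dominates, so the interior estimate still feeds cleanly into the $p(2\alpha)$ conclusion of Theorem~\ref{thm:bound-csa-c}.
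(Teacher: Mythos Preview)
Your overall architecture---the Christoffel-to-$\varphi_n$ reduction via \eqref{eq:christoffel-half-sim}, the interior/outer split at the $n$-scale, and the restricted-range decay \eqref{eq:half-restricted-range-decay} on the outer block---matches the paper line for line. The divergence is entirely in how the interior $S_{\tau n}$ is handled.

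You introduce a \emph{second} split at the $k$-scale into $[0,a_{\beta k})$ and $[a_{\beta k},a_{\tau n}]$, and the edge-of-bulk piece you isolate is a genuine obstacle to reaching the stated exponent by the route you sketch: bounding $\sqrt{\rvd+a_nn^{-2}}\lesssim\sqrt{a_n}$ there and then applying \eqref{eq:poly-half-sup-upper} yields only $(n/k)^{1/\alpha}k^{1/3}$, which is the coarser estimate \eqref{eq:half-exponential-bound-1} rather than the target $n^{1/2\alpha}k^{1/3-1/\alpha}$. The paper sidesteps this by \emph{not} making the $k$-scale split at all. After the regularizer swap $a_nn^{-2}\le a_kk^{-2}$---precisely your near-origin move---it invokes \eqref{eq:poly-half-sup-lower} in one stroke over the whole interior $S_{\beta n}$, reading off $\sup_{S_{\beta n}} w\sqrt{\rvd+a_kk^{-2}}\,\phi_k^2 \le c\,k^{1/3-1/\alpha}$ directly. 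In other words, your near-origin argument \emph{is} the paper's full interior argument; restricting it to $[0,a_{\beta k})$ is what manufactures the problematic leftover. Your closing remark that any slack in the $k$-exponent is harmless for Theorem~\ref{thm:bound-csa-c} is correct, but it is a workaround rather than a proof of the lemma as stated.
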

\begin{proof}
  With $0 < k < \delta n$, choose a $\beta \in (\delta, 1)$, and let $r = \beta/\delta > 1$. With this $\beta$, then uniformly in $n$ and $\rvd \in S_{\beta n}$,
  \begin{align*}
    n \varphi_n(\rvd) \stackrel{\eqref{eq:half-varphi-estimate-lower}}{\leq} c_1 \sqrt{a_n} \sqrt{z + a_n n^{-2}}
  \end{align*}
  Thus, uniformly in $n$ and $0 < k < \delta n$,
  \begin{align}
    \nonumber \sup_{\rvd \in S_{\beta n}} n \lambda_n(\rvd) \phi_k^2(\rvd) &\stackrel{\eqref{eq:christoffel-half-sim}}{\leq} c_1 \sup_{\rvd \in S_{\beta n}} n\varphi_n(\rvd) w(\rvd) \phi_k^2(\rvd) \\
    \nonumber &\stackrel{\eqref{eq:half-varphi-estimate-lower}}{\leq} c_2 \sqrt{a_n} \sup_{\rvd \in S_{\beta n}} w(\rvd) \sqrt{z + a_n n^{-2}} \phi_k^2(\rvd) \\
    \nonumber &\leq c_2 \sqrt{a_n} \sup_{\rvd \in S_{\beta n}} w(\rvd) \sqrt{z + a_k k^{-2}} \phi_k^2(\rvd) \\
    \label{eq:temp-20}&\stackrel{\eqref{eq:poly-half-sup-lower}}{\leq} c_2 \sqrt{a_n} k^{1/3-1/\alpha} \leq c_3 n^{1/2\alpha} k^{1/3-1/\alpha}
  \end{align}
  Let $R_n = S^\ast_n \backslash S_{\beta n}$ be the complement of $S_{\beta n}$ in $S^\ast_n$.  When $z \in R_n$, this implies that $z > a_{r (\delta n)}$. Then for $z \in R_n$, \eqref{eq:half-restricted-range-decay} implies, uniformly in $n$,
  \begin{align}
    \nonumber \sup_{\rvd \in R_n} n \lambda_n(\rvd) \phi_k^2(\rvd) &\stackrel{\eqref{eq:christoffel-half-sim}}{\leq} c_1 \sup_{\rvd \in R_n} n\varphi_n(\rvd) w(\rvd) \phi_k^2(\rvd) \\
                                                          \nonumber &\stackrel{\eqref{eq:half-restricted-range-decay}}{\leq} c_2 \exp(-c_3 \delta n) \left[\sup_{\rvd \in R_n} n \varphi_n(\rvd) \right] \left[\sup_{\rvd \in S_k} w(\rvd) \phi_k^2(\rvd)\right] \\
    \nonumber &\stackrel{\eqref{eq:half-varphi-estimate-upper},\eqref{eq:poly-half-sup}}{\leq} c_4 \exp(-\delta n) \frac{a_n}{a_k} n^{1/3} k \\
    \label{eq:temp-21}&\leq c_5 \exp(-\delta n) n^{4/3+1/\alpha} \leq c_6,
  \end{align}
  Then \eqref{eq:temp-20} and \eqref{eq:temp-21} imply the conclusion \eqref{eq:half-low-degree-bound}. The bound \eqref{eq:half-low-degree-bound-k-0} is obtained by repeating the same procedures as above for the specialized case $\phi_0^2(\rvd) \equiv c_7$.
\end{proof}

We can now finish the proof of Theorem \ref{thm:bound-csa-c}.
\begin{proof}[Proof of Theorem \ref{thm:bound-csa-c}]
  Let $L \geq 0$ be as in the assumptions of the theorem. Note that \eqref{eq:half-exponential-bound-2} is the desired conclusion of Theorem \ref{thm:bound-csa-c} when $\alpha \geq 3$. Therefore, we need only consider $\frac{1}{2} < \alpha < 3$. 
  
  Choose some $\delta \in (0,1)$. Consider first $k > 0$. Formula \eqref{eq:half-low-degree-bound} from Lemma \ref{lemma:half-low-degree-bound} implies that 
  \begin{align*}
    \max_{0 < k < \delta n} n \lambda_n(\rvd) \phi_k^2(\rvd) \leq c_1 n^{1/2\alpha} k^{1/3-1/\alpha}
  \end{align*}
  When $\alpha < 3$,
  \begin{align*}
    n^{1/2\alpha} k^{1/3-1/\alpha} < n^{1/2\alpha}
  \end{align*}
  This and \eqref{eq:half-low-degree-bound-k-0} imply
  \begin{align*}
    \max_{0 \leq k < \delta n} n \lambda_n(\rvd) \phi_k^2(\rvd) \leq c_1 n^{1/2\alpha}
  \end{align*}
  When $\delta n \leq k < n$, \eqref{eq:half-exponential-bound-3} implies
  \begin{align*}
    \max_{\delta n \leq k < n} n \lambda_n(\rvd) \phi_k^2(\rvd) \leq c_1 n^{2/3}
  \end{align*}
  Therefore, 
  \begin{align*}
    \max_{0 \leq k < n} n \lambda_n(\rvd) \phi_k^2(\rvd) \leq c_1 n^{p(2\alpha)},
  \end{align*}
  where $p(2\alpha) = \max\left\{ \frac{2}{3}, \frac{1}{2\alpha}\right\}$ as defined in \eqref{eq:p2-definition}.
\end{proof}

\section{Proof of Theorem \ref{thm:csa-convergence}}\label{sec:appendix-csa-convergence}

With $\phi_k(\rvd)$ the PCE basis corresponding to the distribution of $Z$, let $p(\rvd) = \sum_{k=0}^n \alpha_{k+1} \phi_k(\rvd)$ be an arbitrary degree-$n$ polynomial defined by the expansion coefficients $\coef$. 
The iid samples $Z_1, \ldots, Z_M$ are distributed according to the density $v_n$. The data vector $\boldf$ contains evaluations of $p$ polluted by noise $\bseta$, $f_m = p(Z_m) + \eta_m$, with $\bseta$ satisfying $\left\| \wtmat \bseta \right\|_\infty \leq \stoptol$.

The deviation of the Gramian $\bR$ from the identity indicates how much the original PCE basis is non-orthogonal with respect to the augmented weight $(N \lambda_N) v_n$. We can define a new set of basis elements, $\psi_k$ for $k=0, \ldots, N-1$, which are orthonormal under this augmented weight:
\begin{align*}
  \psi_k(\rvd) &= \sum_{j=1}^N (S)_{k,j} \phi_k(\rvd), & \bS = \bR^{-1/2}.
\end{align*}
I.e., we have
\begin{align*}
  \int_{S_n} \psi_k(\rvd) \psi_\ell(\rvd) (N \lambda_N(\rvd)) v_n(\rvd) \dx{\rvd} = \delta_{k,\ell}.
\end{align*}
By this construction, the basis $\left\{ \psi_k(\rvd) \sqrt{N \lambda_N(\rvd)} \right\}_k$ is orthonormal with respect to the density $v_n$. We can recast the original minimization problem \eqref{eq:csa-weighted-minimization} as one operating on this orthonormal basis, and so we require a bound for these functions. We have, uniformly in $\rvd \in S_n$, $n \in \N$, and $0 \leq k \leq n \geq 1$,
\begin{align*}
  \left\| \sqrt{N \lambda_N(\rvd)} \psi_k(\rvd) \right\|^2 &= \left\| \sum_{j=1}^N S_{k,j} \sqrt{N \lambda_N(\rvd)} \phi_k(\rvd) \right\|^2 \\
                                                           &\leq \left\|\bS\right\|_{\infty}^2 \left\| \sqrt{N \lambda_N(\rvd)} \phi_k(\rvd) \right\|^2 \\
                                                           &\leq \left\|\bR^{-1/2} \right\|_{\infty}^2 L(n),
\end{align*}
where $L(n)$ is the bound on Christoffel-weighted polynomials given in Theorems \ref{thm:bound-csa}.

The arbitrary polynomial $p$ can be expressed as an expansion in the $\psi_k$ via the coefficients 
\begin{align*}
  \bsbeta &= \V{R}^{1/2} \coef
\end{align*}
The Vandermonde-like matrix corresponding to the evaluations of the basis $\psi_k$ at the samples $Z_m$ is given by $\bsPsi \triangleq \vand \bS$. 

By Corollary 12.34 in \cite{rauhut_mathematical_2013} (or Theorem 4.4 in \cite{Rahut_TFNMSR_2010}), then under the assumed sampling condition \eqref{eq:sample-count}, the solution $\bsbeta^\star$ to 
\begin{align*}
  \bsbeta^\star &= \argmin_{\bsc}\; \left\| \bsc\right\|_{1} \quad \text{such that}\quad
  \|\sqrt{\bW}\bsPsi \bsc - \sqrt{\bW}\boldf\|_2 \le \stoptol
\end{align*}
satisfies the estimates
\begin{align*}
  \left\| \bsbeta - \bsbeta^\ast \right\|_2 &\leq \frac{C_1 \sigma_{s,1}(\bsbeta)}{\sqrt{s}} + C_2 \stoptol \\
  \left\| \bsbeta - \bsbeta^\ast \right\|_1 &\leq \frac{C_1 \sigma_{s,1}(\bsbeta)}{\sqrt{s}} + C_2 \sqrt{s} \stoptol.
\end{align*}
The above actually just our convergence result in disguise: we make the identification $\bsbeta = \bR^{1/2} \coef$ and $\bsPsi = \vand \bS = \vand \bR^{-1/2}$ to obtain 
\begin{align*}
  \bR^{1/2} \bsalpha^\star &= \argmin_{\bsc}\; \left\| \bR^{1/2} \bsc \right\|_{1} \quad \text{such that}\quad
  \|\sqrt{\bW}\vand \bsc - \sqrt{\bW}\boldf\|_2 \le \stoptol,
\end{align*}
which is the same as \eqref{eq:csa-weighted-minimization}. Then the discrepancy between $\coef$ and $\coef^\star$ satisfies
\begin{align*}
  \lambda_{\mathrm{min}} \left(\bR^{1/2}\right) \left\| \coef - \coef^\star \right\|_2 &\leq \left\| \bR^{1/2} \coef - \bR^{1/2} \coef^\star \right\|_2 \leq \frac{C_1 \sigma_{s,1}(\bsbeta)}{\sqrt{s}} + C_2 \stoptol, \\
  \frac{1}{\left\|\bR^{-1/2}\right\|_\infty} \left\| \coef - \coef^\star \right\|_1 &\leq \left\| \bR^{1/2} \coef - \bR^{1/2} \coef^\star \right\|_1 \leq C_1 \sigma_{s,1}(\bsbeta) + C_2 \sqrt{s} \stoptol.
\end{align*}
The second inequality chain above uses the facts (i) that for a non-singular matrix $\bs{A}$ and any vector $\bs{x}$, then $\left\| \bs{A} \bs{x} \right\| \geq \left\|\bs{A}^{-1}\right\|^{-1} \left\|\bs{x}\right\|$ for $\left\|\bs{x}\right\|$ any vector norm and $\left\|\bs{A}\right\|$ any corresponding sub-multiplicative norm, and (ii) that $\left\|\bs{A}\right\|_1 = \left\|\bs{A}\right\|_\infty$ for any symmetric matrix $\bs{A}$. The two statements above are exactly \eqref{eq:csa-convergence}.


\bibliographystyle{abbrv}
\bibliography{references}

\end{document}